\documentclass[12pt,leqno]{amsart}
\usepackage{enumerate}
\usepackage{amsmath}
\usepackage{amssymb}
\usepackage{amsthm}
\usepackage{amsfonts}
\usepackage{esint} % to use the average integral
\usepackage{mathrsfs}
\usepackage{mathtools}
\usepackage{tikz}
\usetikzlibrary{shadings,calc}
\usepackage{tikz-cd}
\usepackage{appendix}
\usepackage{relsize}
\usepackage{mathtools}
% refer to colors by names
\usepackage{color}
% specify margin size
\usepackage[
  hmarginratio={1:1},     % equal left and right margins
  vmarginratio={1:1},     % equal top and bottom margins
  outer=4cm,
  inner=4cm,
  marginparwidth=2.5cm,	  % making sure margin notes are displayed correctly
  marginparsep=1mm,
  textwidth=400pt,        % new text width
  heightrounded,          % always useful
  %bindingcorrection=5mm,  % binding correction
  asymmetric              % in duplex layout, margin note always display on right
]{geometry}

\usepackage[colorlinks,citecolor=blue,pagebackref,hypertexnames=false]{hyperref}
% display label
%\usepackage{showkeys}

%\usepackage{refcheck}

\usepackage{graphicx}
%\DeclareGraphicsRule{.1}{mps}{*}{}
%\usepackage{luamplib}

% change the name of the default Appendices

\DeclareMathOperator*{\fiint}{\ensuremath{\iint\text{\kern-1.36em{\raisebox{5.87pt}{\rotatebox{-93}{$\setminus$}}}}}}

% margin
\setlength{\textheight}{8.5truein}
\setlength{\textwidth}{6.5truein}
\voffset-.5in
\hoffset-.7in

\allowdisplaybreaks

% put brackets around enumerated items

% unit disk

% upper half plan

% natural numbers

%\newcommand{\C}{\mathbb{C}}

% measurable set

% limit to

% overline

\newcommand{\OFN}{\Omega_{\mathcal{F}_N,Q}}
\newcommand{\WN}{\mathcal{W}_N}
\newcommand{\pWN}{\mathcal{W}_N^{\Sigma}}

% real and imaginary part

\DeclareMathOperator{\interior}{int}
\DeclareMathOperator{\osc}{osc}

\DeclareMathOperator{\supp}{supp}

\numberwithin{equation}{section}

\newtheorem{theorem}[equation]{Theorem}
\newtheorem*{theorem*}{Theorem}
\newtheorem{lemma}[equation]{Lemma}

\newtheorem{prop}[equation]{Proposition}

\newtheorem{defn}[equation]{Definition}

\theoremstyle{remark}
\newtheorem{remark}[equation]{Remark}

\theoremstyle{definition}

\newcommand{\pO}{\Gamma}
\newcommand{\dt}{\widetilde\Delta}

\newcommand{\Carl}[1]{|\nabla{#1}|^2\delta(X)}

\newcommand{\ve}{\varphi_{\epsilon}}

\DeclareMathOperator{\dist}{dist}
\DeclareMathOperator{\divg}{div}
\DeclareMathOperator{\diam}{diam}
\newcommand{\RR}{\mathbb{R}}
\newcommand{\Hd}{\mathcal{H}}
%collection of boundary dyadic cubes
\newcommand{\DD}{\mathbb{D}}
%collection of Whitney cubes in $\Omega$
\newcommand{\WW}{\mathcal{W}}
%a family of disjoint cubes
\newcommand{\FF}{\mathcal{F}}

\newcommand{\wI}{\widetilde{I^{**}}}

\begin{document}
\allowdisplaybreaks

\title[$S<N$, BMO solvability and $A_\infty$ class]{Square function estimates, BMO Dirichlet problem, and absolute continuity of harmonic measure on lower-dimensional sets}

\author[S. Mayboroda]{Svitlana Mayboroda}
\address{Svitlana Mayboroda, Department of Mathematics, University of Minnesota, Minneapolis, MN 55455, USA}
\email{svitlana@math.umn.edu}

\author[Z. Zhao]{Zihui Zhao}
\address{Zihui Zhao, University of Washington, Department of Mathematics, Seattle, WA 98195-4350, USA}
\email{zhaozh@uw.edu}

\thanks{The first author was partially supported by the NSF INSPIRE Award DMS-1344235, NSF CAREER Award DMS-1220089, and the Simons Foundation grant 563916, SM. The second author was partially supported by NSF grant numbers DMS-1361823, DMS-1500098 and DMS-1664867.}
\thanks{This material is based upon work supported by the National Science Foundation under Grant No. DMS-1440140 while the authors were in residence at the Mathematical Sciences Research Institute in Berkeley, California, during the Spring 2017 semester.}

\subjclass[2010]{35J25, 42B37, 31B35.}

\keywords{Harmonic measure, $A_{\infty}$ Muckenhoupt weight, BMO solvability, degenerate elliptic operator.}

\begin{abstract}
In the recent work \cite{elliptic, Ainfty} G. David, J. Feneuil, and the first author have launched a  program devoted to an analogue of harmonic measure for lower-dimensional sets. A relevant class of partial differential equations, analogous to the class of elliptic PDEs in the classical context, is given by linear degenerate equations with the degeneracy suitably depending on the distance to the boundary. 

The present paper continues this line of research and focuses on the criteria of quantitative absolute continuity of the newly defined harmonic measure with respect to the Hausdorff measure, $\omega\in A_\infty(\sigma)$, in terms of solvability of boundary value problems. The authors establish, in particular, 
square function estimates and solvability of the Dirichlet problem in BMO for domains with lower-dimensional boundaries under the underlying assumption $\omega\in A_\infty(\sigma)$. More generally, it is proved that in all domains with Ahlfors regular boundaries the BMO solvability of the Dirichlet problem is necessary and sufficient for the absolute continuity of the harmonic measure.
\end{abstract}

\maketitle

\section{Introduction}

The last decade has seen great advances in understanding of the connections between
analytic, geometric, and PDE properties of sets. One of the central questions in this quest pertains to the necessary and sufficient conditions on the geometry of the domain which guarantee absolute continuity of the harmonic measure $\omega$  with respect to the surface measure $\sigma$ of the boundary. The interest to this problem begins with the classical 1916  F. and M. Riesz theorem \cite{RR} which asserts that for a simply connected planar domain with a rectifiable boundary, the harmonic measure is absolutely continuous with respect to the boundary surface measure (see \cite{La} for a quantitative version). A local analogue of this result was established in \cite{BJ}, which also showed that absolute continuity may fail in the absence of some topological hypothesis, even for a rectifiable domain. The emerging philosophy is that  the key geometric properties at play are smoothness (or to be precise, rectifiability) and connectedness of the domain.  In higher dimensions, the latter is much trickier, and without any pertinent details we mention that the absolute continuity of the harmonic measure with respect to the boundary surface measure has been proved in Lipschitz graph  domains \cite{D}, and later in the so-called chord-arc domains in \cite{DJ, S}, and more recent achievements in the field have progressively further weakened the underlying geometric hypotheses \cite{BL, Ba, HM, AHMNT, Mo, ABaHM, ABoHM, Az, HM2}, although the sharp assumptions, particularly in terms of connectedness, are not completely clear yet. %The above results do not hold in general for elliptic operators, unless the elliptic matrix has some decay towards the boundary, see \cite{FJK, KP}.
Meanwhile in the converse direction, the necessary conditions for the absolute continuity of harmonic measure with respect to the Hausdorff measure of the boundary have been obtained in 1-sided chord-arc domains in \cite{HMU} (see also \cite{AHMNT}), and later in more general domains in \cite{MT, HLMN}. As a culmination of this line of work, it was shown without any topological background assumptions that rectifiability is necessary for absolute continuity of the harmonic measure in \cite{AHMMMTV}. These results were extended to general elliptic operators and other manifestations of solvability of the Dirichlet boundary value problem in \cite{HMT, TZ, AM, HMMTZ, HMM, GMT, AGMT} to mention only a few: the area is blossoming and we do not aim at a complete listing of the related literature.

All of these advances heavily rely on the properties of harmonic functions, and as such, do not apply to domains with lower-dimensional boundaries, for instance, a complement of a curve in $\RR^3$. In fact, sets of higher co-dimension are not visible by classical Brownian travelers (that is, the probability to hit such a set is zero) and equivalently, by classical harmonic functions. Led by these considerations, 
G. David, J. Feneuil, and the first author have recently launched a  program devoted to a new type of degenerate elliptic PDEs \cite{elliptic}, such that the corresponding elliptic measure (still referred to as harmonic measure in the course of this discussion) is not only non-trivial, but absolutely continuous with respect to the Hausdorff  measure in favorable geometric circumstances. The goal of the present paper is to establish equivalence of absolute continuity of harmonic measure to the BMO solvability of the Dirichlet problem on arbitrary Ahlfors-regular domains and in the general class of degenerate elliptic operators, and to prove a technical but very important in many applications roadblock: the square function estimates for solutions. Let us discuss this in more details. 

We shall work in the general context of $d$-Ahlfors-David regular sets, which are roughly speaking, $d$-dimensional uniformly at all scales. 
\begin{defn}\label{def:ADR} Let $\Gamma \subset \RR^n$ be a closed set and $d\leq n$ be an integer. We say $\Gamma$ is $d$-Ahlfors  regular if there exists a constant $ C_0\geq 1$ such that for any $q\in \Gamma$ and $r>0$,
\[ C_0^{-1} r^{d} \leq \mathcal{H}^d(B(q, r)\cap \Gamma) \leq C_0 r^{d}, \]
where $\mathcal{H}^{d}$ is the $d$-dimensional Hausdorff measure. We shall often denote $\mathcal{H}^{d}|_\Gamma$, that is $\mathcal{H}^d$ restricted to the set $\Gamma$, by $\sigma$.
\end{defn}

Let $\Gamma$ be a $d$-Ahlfors regular set in $\RR^n$ with $d<n-1$, and $\Omega = \RR^n \setminus \Gamma$. Consider the degenerate elliptic operator $L=-\divg(A(X)\nabla)$ with a real, symmetric $n\times n$ matrix $A(X)$ satisfying
	\begin{equation}\label{eq:Aub}
		 A(X) \xi\cdot \zeta \leq C_1 |\xi| |\zeta| \delta(X)^{d-n+1} \text{ for } X\in\Omega \text{ and } \xi, \zeta\in\RR^n,
	\end{equation}
	\begin{equation}\label{eq:Alb}
		A(X) \xi \cdot \xi \geq C_1^{-1} |\xi|^2 \delta(X)^{d-n+1} \text{ for } X\in\Omega \text{ and } \xi \in\RR^n
	\end{equation}
	for some $C_1 \geq 1$, where $\delta(X) = \dist(X,\Gamma)$. We say a function $u $ in the Sobolev space $W_r(\Omega)$ (see the definition in \eqref{def:Wr}) is a weak solution to $Lu=0$, if
	\[ \iint_\Omega A(X) \nabla u \cdot \nabla \varphi ~ dX = 0, \quad \text{ for any } \varphi \in C_0^\infty(\Omega). \]
The basic elliptic theory of such equations was developed in \cite{elliptic}. In particular, it was shown that the Dirichlet problem 
\begin{equation}\tag{D}\label{ellp}
	\left\{\begin{array}{ll}
		Lu=0 & \text{in~} \Omega \\
		u=f & \text{on~} \Gamma.
	\end{array}\right.
\end{equation}
has a suitably interpreted weak solution for smooth compactly supported (and more general) $f$ on $\Gamma$, that such a solution is locally bounded and H\"older continuous in the interior and at the boundary, and finally, that it can be written in terms of the corresponding harmonic measure, and the latter satisfies the usual doubling, non-degeneracy, and change-of-pole conditions. We refer the reader to Section \ref{sect:prelim} for details. For now, we only recall that  the harmonic measure is a (family of) positive regular Borel measure(s) $\omega^X$ on $\Gamma$, $X\in \Omega$, such that, in particular, for any boundary function $f\in C_0^0(\pO)$ the solution to \eqref{ellp} can be written as 
\begin{equation}\label{solnhm}u(X) = \int_{\Gamma} f d\omega^X.
\end{equation}
\begin{defn}
	We say the harmonic measure $\omega$ is of class $A_\infty$ with respect to the surface measure $\sigma = \mathcal{H}^d|_{\Gamma}$, or simply $\omega \in A_\infty(\sigma)$, if for any $\epsilon>0$, there exists $\delta=\delta(\epsilon)>0$ such that for any surface ball $\Delta$, any surface ball $\Delta' \subset \Delta$ and any Borel set $E\subset \Delta'$, we have
	\begin{equation}
		\frac{\sigma(E)}{\sigma(\Delta')} <\delta \implies \dfrac{\omega^A(E)}{\omega^A(\Delta')} <\epsilon.
	\end{equation}
	Here $A = A_{\Delta} $ is a corkscrew point for $\Delta$ (see Lemma \ref{lm:cksc} for the definition and existence of corkscrew point).
\end{defn}
We remark that while the aforementioned basic properties of harmonic measure (existence, doubling, non-degeneracy, change-of-poles etc.) hold in full generality of $d$-Ahlfors regular sets, $d<n-1$, the $A_\infty$ property of the harmonic measure is much more delicate and is not expected on very rough domains. %\zihui{This is an example in codim one. Is there a such example in the case $d<n-1$?} 
In particular, already on a planar domain with 1-dimensional boundary rectifiability of the boundary is necessary for $\omega \in A_\infty(\sigma)$. On the other hand, it is not vacuous either, as the authors in \cite{Ainfty} have proved that for any $d<n-1$ and $\Gamma$ a $d$-dimensional Lipschitz graph with a small Lipschitz constant the harmonic measure is absolutely continuous with respect to the Hausdorff measure for the operator $L=-\divg(D(X)^{-n+d+1} \nabla)$ where 
\begin{equation} \label{1.3}
D(X) = \Big\{ \int_\Gamma |X-y|^{-d-\alpha} d \mathcal{H}^d(y) \Big\}^{-1/\alpha}, \quad X\in \Omega, 
\end{equation}
for some constant $\alpha > 0$. 
It is easy to see that 
$D(X)$ is equivalent to the Euclidean distance $\dist(X,\Gamma)$ (and this would even stay true when $\Gamma$ is an Ahlfors regular set) but not equal.

For any $q\in\Gamma$ and $r>0$, we use $\Delta = \Delta(q,r)$ to denote the surface ball $B(q,r)\cap \Gamma$, and use $T(\Delta) := B(q,r)\cap\Omega$ to denote the ``tent'' above $\Delta$.
A function $f$ defined on $\Gamma$ is a BMO function if
	\begin{equation}\label{eq:defBMO}
		\|f\|_{BMO} := \sup_{\Delta \subset \pO} \left(\fint_{\Delta} |f-f_\Delta|^2 d\sigma\right)^{\frac{1}{2}} <\infty.
	\end{equation}
Here $f_\Delta$ denotes the average $\fint_{\Delta} f d\sigma$.
\begin{defn} We say that the Dirichlet problem \eqref{ellp} is solvable in BMO if  for any boundary function $f\in C_0^0(\pO)$, the solution $u$ to \eqref{ellp} given by \eqref{solnhm} satisfies a condition that $|\nabla u|^2 \delta(X)^{d-n+2}\,dX$ is a Carleson measure with norm bounded by a constant multiple of $\|f\|_{BMO}^2$, that is, 
	\begin{equation}\label{MT:Carlest}
		\sup_{\Delta\subset\pO} \frac{1}{\sigma(\Delta)} \iint_{T(\Delta)} |\nabla u|^2 \delta(X)^{d-n+2}\,dX \leq C \|f\|_{BMO}^2.
	\end{equation}
\end{defn}

One of the main results of the present paper is as follows. 
\begin{theorem}\label{thm:main}
	Let $\Gamma$ be a $d$-Ahlfors regular set in $\RR^n$ with $d<n-1$ and $\Omega = \mathbb{R}^n \setminus \Gamma$. Consider the operator $L=-\divg(A(X)\nabla)$ with a real, symmetric $n\times n$ matrix $A(X)$ satisfying \eqref{eq:Aub} and \eqref{eq:Alb}.
	Then the harmonic measure $\omega \in A_{\infty}(\sigma)$ if and only if the Dirichlet problem \eqref{ellp} is BMO-solvable.
\end{theorem}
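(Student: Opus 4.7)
The proof splits into the two implications, with square-function / non-tangential-maximal-function bounds as the common bridge.

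\textbf{Direction $\omega\in A_\infty(\sigma)\Rightarrow$ BMO-solvability.} First, I would extract from the $A_\infty$ hypothesis the $L^{p_0}$-solvability of \eqref{ellp} for some $p_0\in(1,\infty)$ via the standard reverse-H\"older / dual-weight correspondence, transplanted to the present setting using the doubling and change-of-pole properties of $\omega$ proved in \cite{elliptic,Ainfty}. Second, I would establish the square-function bound $\|Su\|_{L^q(\sigma)}\lesssim\|Nu\|_{L^q(\sigma)}$ for all $0<q<\infty$, where
\[
Su(q)^2=\iint_{\pO(q)}|\nabla u(X)|^2\,\delta(X)^{2-n}\,dX
\]
with $\pO(q)\subset\Omega$ a nontangential cone at $q$; by Fubini and $d$-Ahlfors regularity ($\sigma(\Delta(q,\delta(X)))\sim \delta(X)^d$), the global $L^2(\sigma)$-integral of $Su$ reproduces the Carleson weight $\delta^{d-n+2}$ of \eqref{MT:Carlest}. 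The proof of $S\lesssim N$ would follow the Dahlberg--Jerison--Kenig good-$\lambda$ scheme: integrate $\divg(A\nabla u\cdot u\varphi)$ against Whitney cut-offs $\varphi$, with the degeneracy weight $\delta^{d-n+1}$ in $A$ exactly matching the weight in the square function. Third, decomposing a BMO datum $f$ near a surface ball $\Delta$ into $f_\Delta+(f-f_\Delta)\chi_{2\Delta}+(f-f_\Delta)\chi_{\pO\setminus 2\Delta}$, the first term contributes nothing, the middle piece is handled by $L^{p_0}$-solvability together with $S\lesssim N$ and John--Nirenberg, and the far-field tail is controlled by the boundary H\"older continuity proved in \cite{elliptic} combined with the decay of the Green function.

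\textbf{Direction BMO-solvability $\Rightarrow\omega\in A_\infty(\sigma)$.} Here I would test the hypothesis on indicator functions $f=\chi_E$ for arbitrary Borel $E\subset\Delta$. Since $\|\chi_E\|_{BMO}\le 1$, \eqref{MT:Carlest} furnishes a uniform Carleson bound on $u_E(X):=\omega^X(E)$ in $T(\Delta)$. A companion inequality $\|Nu_E\|_{L^2(\omega^{A_\Delta})}\lesssim\|Su_E\|_{L^2(\omega^{A_\Delta})}+\|u_E\|_{\infty}$, which can be derived from a Green identity and the pointwise bound $0\le u_E\le 1$ using only ellipticity (not $A_\infty$), converts the Carleson bound into a distribution-function estimate on $u_E$ with respect to $\omega^{A_\Delta}$. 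A John--Nirenberg-type iteration along the David-Mattila-type dyadic decomposition of $\pO$ (available from $d$-Ahlfors regularity alone) then yields exponential decay of $\omega^{A_\Delta}(\{u_E<\eta\}\cap\Delta)$ in $\eta$, which is equivalent to $\omega\in A_\infty(\sigma)$.

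\textbf{Main obstacle.} The central technical difficulty is the good-$\lambda$ inequality producing $S\lesssim N$ in $L^q(\sigma)$ in this degenerate codimension-$(n-d)$ regime. Classical proofs rely on sharp Whitney cut-offs, Caccioppoli estimates, and Green-function tail bounds, all of which must here be re-derived so that the weight $\delta^{d-n+1}$ is absorbed correctly; writing down a sawtooth/Carleson-tent geometry in codimension greater than one and verifying that the induced partition of $\Omega$ is quantitatively controlled by $\sigma$ on $\pO$ requires substantial bookkeeping drawing on the PDE machinery of \cite{elliptic,Ainfty}. A secondary delicate point is ensuring that the decay of the Poisson kernel in the higher codimension dominates the logarithmic John--Nirenberg growth of the far-field BMO tail in the first direction; this should hold but demands a careful quantitative verification rather than an appeal to standard codimension-one estimates.
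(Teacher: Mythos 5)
Your first direction tracks the paper closely: the paper likewise goes $A_\infty\Rightarrow L^p$-solvability (via reverse H\"older for the Poisson kernel on dyadic annuli plus boundary H\"older decay), then $\|Su\|_{L^p(\sigma)}\lesssim\|Nu\|_{L^p(\sigma)}$ via a good-$\lambda$ inequality, then the three-term decomposition of $f$ with the far-field piece controlled by boundary H\"older regularity and Lemma \ref{lm:delta}. You correctly flag that the Dahlberg--Jerison--Kenig scheme is the crux, but note that the paper's resolution is not a re-derivation of DJK: harmonic measure of the sawtooth domain is unavailable in higher codimension (its boundary mixes dimensions $d$ and $n-1$), so the paper instead integrates against the Green function of the \emph{whole} domain $\Omega$ with pole outside $B(x_Q,2C_3\ell(Q))$, and the entire weight of the argument falls on the cutoff $\psi_N$ of Lemma \ref{lm:stcutoff}, in particular the summation estimate $\sum_{I\in\mathcal{W}_N^{\Sigma}}\omega(Q_I)\lesssim\omega(Q)$, which replaces the surface-measure bookkeeping of the classical proof. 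Your sketch leaves this, the genuinely new ingredient, unresolved.

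The converse direction is where there is a real gap. First, $\chi_E$ is not admissible data: BMO-solvability as defined applies to $f\in C_0^0(\Gamma)$, so you cannot test the Carleson hypothesis on indicators directly (the paper has to mollify even its continuous-looking test function and pass to the limit via Fatou). Second, and more seriously, $\|\chi_E\|_{BMO}\le\sqrt2$ gives a Carleson bound of size $O(\sigma(\Delta))$ with \emph{no small parameter}, so nothing in your setup links the smallness of $\sigma(E)/\sigma(\Delta)$ to the smallness of $\omega^A(E)/\omega^A(\Delta)$; the route you then sketch (a reverse estimate $\|Nu_E\|_{L^2(\omega)}\lesssim\|Su_E\|_{L^2(\omega)}+\|u_E\|_\infty$ plus a John--Nirenberg iteration) is essentially the ``Carleson measure estimate implies $A_\infty$'' theorem, which is a separate hard result; the $N\lesssim S$ inequality is nowhere proved in this paper, is not a consequence of ``a Green identity and ellipticity,'' and in higher codimension would again require the sawtooth machinery you are trying to avoid. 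The paper's actual mechanism is much shorter: take $f=\max\{0,1+\delta\log M_\sigma\chi_E\}$, which dominates $\chi_E$, has $\|f\|_{BMO}\lesssim\delta$ (this is where the small parameter enters), and is supported in $2\Delta$ precisely when $\sigma(E)/\sigma(\Delta)\lesssim e^{-1/\delta}$; then Lemma \ref{lm:BMOprelim} converts the Carleson hypothesis into the pointwise bound $\int_\Delta f\,d\omega^A\lesssim\|f\|_{BMO}$, using the new Poincar\'e-type inequality of Lemma \ref{lm:Poincare} to bound $u(A)$ from below by the truncated square function over a ball where $Tu=0$. Without some substitute for this smallness mechanism, your second direction does not close.
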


In co-dimension 1 this has been proved in \cite{DKP} for Lipschitz domains and in \cite{Zh} for uniform domains with Ahlfors regular boundaries. One of the main difficulties 
in our case is to prove an upper bound on the square function by the non-tangential maximal function. The latter, in co-dimension 1, goes back to the work of Dahlberg, Jerison, and Kenig for Lipschitz domains in \cite{DJK}, and their method can be extended to more general sets with the help of preliminary estimates proved in \cite{NTA}. This result, and even more so the method behind it, underpinned many later developments in the subject. To prove it, \cite{DJK} systematically use the harmonic measures of the sawtooth domains to get a good-$\lambda$ inequality. This technique is not available to us. The sawtooth domain is a domain inside $\Omega$ on top of a set $E\subset \partial\Omega = \Gamma$ that satisfies some desired properties, and roughly speaking, allows one to exchange local results with global ones. In some sense, it is the use of the sawtooth domains which allows one to exploit the fact that at every scale the $A_\infty$ condition only carries information on a big portion of a boundary ball, rather than the entire boundary ball - a crucial ingredient in this and many other arguments in the theory. In the case of the lower dimensional $\Gamma$, however, the boundary of a sawtooth domain may have arbitrarily small/large pieces of dimension $d$ and, simultaneously, pieces of dimension $n-1$. For that reason, it is not automatically clear if one can make sense of the harmonic measure for the sawtooth domain or to resolve the Dirichlet problem on the sawtooth domain. Instead we are bound to work with the Green function of the entire $\Omega$, and get a good-$\lambda$ inequality by using various considerations akin to the comparison principle. Needless to say, the geometric arguments for lower-dimensional sets are also very different and the technical side of the present paper ends up surprisingly far from \cite{DJK}, \cite{DKP} and \cite{Zh}. Moreover, since the theory of the lower-dimensional sets is still at its infancy, these technical geometric arguments, e.g. Lemma \ref{lm:stcutoff}, are likely to be useful in many future works.

The formal results in this direction are as follows. For any $q\in\Gamma$ and $\alpha>0$, we define the non-tangential cone $\Gamma^{\alpha}(q)$ with vertex $q$ and aperture $\alpha$ as 
\begin{equation}\label{def:NTC}
	\Gamma^{\alpha}(q) = \{X\in\Omega: |X-q| < (1+\alpha) \delta(X) \},
\end{equation} 
and a truncated cone as
\[ \Gamma_r^{\alpha}(q) = \Gamma^{\alpha}(q)\cap B(q,r). \]
When there is no confusion we drop the super-index $\alpha$ and simply denote them by $\Gamma(q)$ and $\Gamma_r(q)$, respectively. We define the non-tangential square function
\begin{equation}
	Su(q) = \left( \iint_{\Gamma(q)}  |\nabla u|^2 \delta(X)^{1-d} dm(X) \right)^{\frac{1}{2}}
\end{equation}
and the truncated square function
\begin{equation}
	S_r u(q) = \left( \iint_{\Gamma_r(Q)}  |\nabla u|^2 \delta(X)^{1-d} dm(X) \right)^{\frac{1}{2}}.
\end{equation}
We also define the non-tangential maximal function and its truncated analogue
\begin{equation}
	Nu(q) = \sup_{X\in \Gamma(q)} |u(X)|, \quad N_r u(q) = \sup_{X\in \Gamma_r(q)} |u(X)|.
\end{equation}
Given apertures $0< \alpha< \alpha_1 < \beta$, for simplicity we denote $Su, S'u$ as the square function on non-tangential cones of aperture $\alpha, \alpha_1$, respectively, and denote $Nu$ the non-tangential maximal function of aperture $\beta$.
%Suppose $\DD$ is a collection of dyadic cubes for the $d$-Ahlfors regular set $\Gamma$, which consists of nested cubes in $\Gamma$ that behave like surface balls. See Lemma \ref{lm:dcAR} for the detail. 
We have:
\begin{prop}[good-$\lambda$ inequality for $\omega$]\label{lm:goodlambdaomega0}
	Suppose $\Gamma$ is a $d$-Ahlfors regular set in $\RR^n$ with $d<n-1$, $\Omega= \RR^n \setminus \Gamma$ and $\DD$ is a collection of dyadic cubes for $\Gamma$, see Lemma \ref{lm:dcAR} for the detail.
	Let $u\in W_r(\Omega)$ be a non-negative solution of $Lu=0$ such that for some dyadic cube $Q\in \DD$ and $\lambda>0$ there exists $q_1 \in \Gamma$ with
	\[ S'u(q_1) \leq \lambda \quad \text{ and } \quad |q_1 - q| \leq C_2 \diam Q \text{ for all } q\in Q. \]
	Then for any $X_Q \notin B(x_Q, 2C_3 \ell(Q))$ and $\delta$ sufficiently small, we have
	\begin{equation}\label{eq:goodlambda}
		\omega^{X_Q}\left(\left\{ q\in Q: Su(q) > 2\lambda, Nu(q) \leq \delta\lambda \right\} \right) \leq C\delta^2 \omega^{X_Q}\left( Q \right)
	\end{equation}
	Here $x_Q, \ell(Q)$ are the ``center'' and ``size'' of $Q$, see Lemma \ref{lm:dcAR}. The constant $C>0$ depends on the allowable parameters $d, n, C_0, C_1$, the apertures $\alpha, \alpha_1, \beta$, and the given constants $C_2, C_3$.
\end{prop}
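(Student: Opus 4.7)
The plan is to adapt the Dahlberg--Jerison--Kenig good-$\lambda$ argument to the lower-dimensional degenerate setting, working with the global Green function $G(X_Q,\cdot)$ of $\Omega$ in place of harmonic measures of sawtooth subdomains (which, as the introduction emphasizes, are unavailable in this codimension regime). Set $E := \{q\in Q : Su(q) > 2\lambda,\ Nu(q)\leq \delta\lambda\}$. The first step is a cone truncation: a purely geometric comparison using $0<\alpha<\alpha_1$ and $|q-q_1|\leq C_2\diam Q$ shows that whenever $X\in \Gamma^\alpha(q)$ with $q\in Q$ and $\delta(X)\geq C\ell(Q)/(\alpha_1-\alpha)$, one also has $X\in \Gamma^{\alpha_1}(q_1)$. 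Hence, letting $V$ be the tent over a fixed dilate of $Q$, the complement $\Gamma^\alpha(q)\setminus V$ is absorbed into $\Gamma^{\alpha_1}(q_1)$, and the hypothesis $S'u(q_1)\leq\lambda$ forces
\[
\iint_{\Gamma^\alpha(q)\cap V} |\nabla u|^2\,\delta^{1-d}\,dm \;\geq\; (2\lambda)^2 - \lambda^2 \;=\; 3\lambda^2 \qquad \text{for every } q \in E.
\]

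Next, integrate this inequality over $E$ against $\omega^{X_Q}$ and swap the order of integration by Fubini. The shadow set $\{q\in E : X\in \Gamma^\alpha(q)\}$ sits inside a surface ball of radius $\asymp \delta(X)$ about the closest boundary point to $X$; combining the doubling of $\omega$ with the CFMS-type identification $\omega^{X_Q}(\Delta(x,\delta(X))) \asymp \delta(X)^{d-1} G(X_Q,X)$ available for this class of operators from \cite{elliptic}, together with the lower ellipticity \eqref{eq:Alb} used to pass from $|\nabla u|^2$ to $A\nabla u\cdot\nabla u$ with the matching weight $\delta^{n-d-1}$, one arrives at
\[
\lambda^2\,\omega^{X_Q}(E) \;\lesssim\; \iint_V A(X)\nabla u \cdot \nabla u\; G(X_Q,X)\,dX.
\]

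The final step exploits the identity $L(u^2) = -2 A\nabla u\cdot\nabla u$, which follows from $Lu=0$ and the symmetry of $A$. Construct a smooth region $U\supset V$ whose non-$\Gamma$ boundary is contained in the wider cones $\Gamma^\beta(q)$ for $q\in Q$ but is separated from $\Gamma$ by a Whitney-type cutoff; this is the role of the geometric Lemma \ref{lm:stcutoff} announced in the introduction. Since $X_Q\notin U$, Green's formula gives
\[
2\iint_U A\nabla u\cdot\nabla u\;G(X_Q,X)\,dX \;=\; \int_{\partial U}\bigl(u^2\,\partial_{\nu_A} G \;-\; 2uG\,\partial_{\nu_A} u\bigr)\,dS.
\]
Because $\partial U$ lies inside the $\Gamma^\beta$-cones at points of $Q$, we have $|u|\leq \delta\lambda$ on $\partial U$ (after a standard replacement of $E$ by a slightly enlarged ``good'' superset, the discarded sets being absorbed into the $\delta^2$ error). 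The cross-term $uG\,\partial_{\nu_A}u$ is controlled by the first term via a weighted Caccioppoli estimate on a thin outer shell, and $\int_{\partial U} \partial_{\nu_A} G\,dS \asymp \omega^{X_Q}(Q)$ because integrating the conormal derivative of $G$ across a surface enclosing $Q$ reproduces harmonic measure. Combining gives $\iint_U A\nabla u\cdot\nabla u\,G\,dX \lesssim (\delta\lambda)^2\omega^{X_Q}(Q)$, which together with the previous display yields the desired bound $\omega^{X_Q}(E)\lesssim \delta^2 \omega^{X_Q}(Q)$.

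The main obstacle will be the construction of the sawtooth-like region $U$ and the control of the associated boundary integrals in Green's identity. In codimension $1$ this step is bypassed by solving a Dirichlet problem on a genuine sawtooth subdomain, but here the boundary of any such subdomain mixes dimensions $d$ and $n-1$, so one is forced to work with smooth Whitney-adapted cutoffs and to manage the resulting error terms via the degenerate Caccioppoli inequality. This geometric and PDE construction---the substitute for the ``sawtooth harmonic measure'' used in \cite{DJK, DKP, Zh}---is expected to carry most of the technical weight, and is precisely what the tools alluded to in the introduction (notably Lemma \ref{lm:stcutoff}) are designed to supply.
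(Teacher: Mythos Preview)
Your architecture matches the paper's---truncate the cone, then Fubini plus the CFMS relation $\omega^{X_Q}(\Delta(\cdot,\delta(X)))\asymp\delta(X)^{d-1}G(X_Q,X)$ to obtain $\lambda^2\omega^{X_Q}(E)\lesssim\iint A\nabla u\cdot\nabla u\,G\,dX$, then integrate by parts using $Lu=LG=0$ and a sawtooth cutoff---but there is a genuine gap in your handling of the region $U$.

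You require simultaneously $U\supset V$ (the full tent over $Q$) and $|u|\leq\delta\lambda$ on $\partial U$. These are incompatible: the bound $|u|\leq\delta\lambda$ is only available on $\Gamma^\beta$-cones over $F=\{q\in Q:Nu(q)\leq\delta\lambda\}$, i.e.\ on the sawtooth $\Omega_{\mathcal{F}_2,Q}$ where $\mathcal{F}_2$ is the stopping-time family for $\{Nu>\delta\lambda\}$. That sawtooth has holes over $Q\setminus F$ and does not contain $V$. Your parenthetical about ``discarded sets absorbed into the $\delta^2$ error'' does not fix this. Separately, your claim $\int_{\partial U}\partial_{\nu_A}G\,dS\asymp\omega^{X_Q}(Q)$ is false as stated: if $U\subset\subset\Omega$ and $X_Q\notin U$ then $G(X_Q,\cdot)$ is a solution on $U$ and that integral vanishes. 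What is actually needed is the one-sided bound $\int_{\partial U}|\partial_{\nu_A}G|\,dS\lesssim\omega^{X_Q}(Q)$, which in the cutoff formulation is exactly the nontrivial estimate~(iv) of Lemma~\ref{lm:stcutoff}, namely $\sum_{I\in\mathcal{W}_N^\Sigma}\omega(Q_I)\lesssim\omega(Q)$.

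The paper resolves the region problem by truncating in Step~1 not at the large tent but at a \emph{small} ball $B(q,\tau r)$, so that $\Gamma^\alpha_{\tau r}(q)\subset\Gamma_d^Q(q)\subset\Omega_{\mathcal{F}_2,Q}$ for $q\in F$. The price is an intermediate annulus $U_2=\Gamma^\alpha(q)\cap(B(q,tr)\setminus B(q,\tau r))$ on which the cone-switching to $q_1$ fails; this piece is controlled by a Whitney covering, interior Caccioppoli, and the pointwise bound $|u|\leq Nu(q)\leq\delta\lambda$, at a cost of $(\delta\lambda)^2\log(t/\tau)<2\lambda^2$ for $\delta$ small. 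Your Step~1, with its clean $3\lambda^2$ lower bound, skips precisely this $U_2$ argument, and without it the subsequent sawtooth step cannot be set up. The integration by parts is then carried out with the smooth cutoff $\psi_N$ of Lemma~\ref{lm:stcutoff} rather than Green's formula on a domain, since the sawtooth boundary is too irregular for surface integrals.
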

\noindent If, moreover, $\omega\in A_\infty(\sigma)$, then the good-$\lambda$ inequality for $\sigma$ follows and we conclude that 
\begin{equation}\label{eq:SlessthanN}
		\|Su\|_{L^p(\sigma)} \leq C\|Nu\|_{L^p(\sigma)}
	\end{equation}
	 for any $1\leq p<\infty$ and any solution $u\in W_r(\Omega)$ to $Lu=0$ such that the right hand side is finite.

\bigskip

The paper is structured as follows. In Section \ref{sect:prelim} we first state some lemmas proved in \cite{elliptic} and prove some preliminary results based off these lemmas. In Section \ref{sect:SlessN} we prove the above Proposition after a careful analysis of the sawtooth domains, and moreover we prove the upper bound of the square function by the non-tangential maximal function. This is an independdent result and will also be used in Section \ref{sect:showBMOs}, where we prove if the harmonic measure $\omega$ is of class $A_\infty(\sigma)$, the Dirichlet problem is BMO-solvable. We prove the converse in Section \ref{sect:showAinfty}, that is, BMO-solvability implies the harmonic measure $\omega$ is of class $A_\infty(\sigma)$.

\bigskip

\textbf{Acknowledgement} We would like to thank Guy David for very helpful discussions in connection with this paper.

%%%%%%%%%%%%%%%%%%%%%%%%%%%%%%%%%%%%%%%%%%%%%%%%%%%%%%%%%%%%%%%%%%%%%%%%%%%%%%%%%%%%%%%%%%%% 
%%%%%%%%%%%%%%%%%%%%%%%%%%%%%%%%%%%%%%%%%%%%%%%%%%%%%%%%%%%%%%%%%%%%%%%%%%%%%%%%%%%%%%%%%%%% 
\section{Preliminaries}\label{sect:prelim}

The ground work for harmonic measures associated to the (degenerate) elliptic operators $L$ on sets of lower dimensions $d<n-1$ has been laid out in the work of David, Feneuil and Mayboroda, see \cite{elliptic}. In this section we state some relevant preliminary results proven in \cite{elliptic}; we also prove a few lemmas that follow easily and are needed in later sections. For the convenience of readers familiar with this subject, we point out that the new lemmas we prove here are Lemmas \ref{lm:delta}, \ref{lm:fchiE} and \ref{lm:Poincare}. Unless specified otherwise, the constants that appear in the following lemmas would depend only on the allowable constants, namely the dimensions $n$, $d$, the Ahlfors regular constant $C_0$ and the ellipticity constant $C_1$.

We start with the following notations:
\begin{itemize}
	\item For any $X\in\Omega$, we denote $\delta(X) =\dist(X,\Gamma)$, the Euclidean distance from $X$ to $\Gamma$, and the weight $w(X) = \delta(X)^{d-n+1}$. 
	\item We denote
		\[ \mathcal{A}(X) := \frac{1}{w(X)} A(X) = \delta(X)^{n-1-d} A(X) . \]
		By \eqref{eq:Aub} and \eqref{eq:Alb}, $\mathcal{A}(X)$ is a uniformly elliptic matrix.
	\item We define a measure $m$ on Borel sets in $\RR^n$ by letting $m(E) = \iint_E w(X) dm(X)$. We may write $dm(X) = w(X) dX$. Since $0<w <\infty$ a.e. in $\RR^n$, $m$ and the Lebesgue measure are mutually absolutely continuous.
	\item For any $q\in\Gamma$ and $r>0$, we use the notation $\Delta(q,r)$, or sometimes simply $\Delta$, to denote the surface ball $ B(q,r) \cap \pO$, and $T(\Delta)$ to denote the ``tent'' $B(q,r)\cap \Omega$ over $\Delta$.
	\item We denote the surface measure $\sigma = \Hd^d|_{\pO}$.
	\item If $B=B(X,r)$ is a ball and $\alpha>0$ a constant, we use $\alpha B = B(X,\alpha r)$ to denote the concentric dilation of $B$. The same notation applies to surface balls $\alpha\Delta$.
\end{itemize}  

\begin{lemma}[Harnack chain condition, Lemma 2.1 of \cite{elliptic}]\label{lm:Hcc}
	Let $\Gamma$ be a $d$-Ahlfors regular set in $\RR^n$ and $d<n-1$. Then there exists a constant $c\in(0,1)$, that depends only on $d, n, C_0$, such that for $\Lambda \geq 1$ and $X_1, X_2 \in\Omega$ such that $\delta(X_i) \geq s$ and $|X_1 -X_2| \leq \Lambda s$, we can find two points $Y_i\in B(X_i, s/2) $ such that $\dist([Y_1, Y_2],\Gamma) \geq c\Lambda^{-d/(n-1-d)} s$. That is, there is a thick tube in $\Omega$ that connects the balls $B(X_i, s/2)$.
\end{lemma}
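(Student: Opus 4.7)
The natural strategy is a measure-theoretic (Fubini) argument on the product $R:=B(X_1,s/2)\times B(X_2,s/2)$, of total Lebesgue measure $\asymp s^{2n}$. Set $\rho := c\Lambda^{-d/(n-1-d)}s$ and call $(Y_1,Y_2)\in R$ \emph{bad} if $\dist([Y_1,Y_2],\Gamma)<\rho$; the lemma reduces to showing that the bad pairs have Lebesgue measure strictly less than $|R|$, for then a good pair exists by pigeonhole. The first step converts the existence of a single close point on a bad segment into an interval of close points. Letting $Z_t := (1-t)Y_1+tY_2$ and using $|Y_1-Y_2|\leq 2\Lambda s$, continuity of $\dist(\cdot,\Gamma)$ along the segment yields that if $\dist(Z_{t_0},\Gamma)<\rho$ for some $t_0\in[0,1]$, then $\dist(Z_t,\Gamma)<2\rho$ on an interval of length $\gtrsim \rho/(\Lambda s)$ about $t_0$. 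This gives the envelope
\[
\mathbf{1}_{\text{bad}}(Y_1,Y_2)\;\leq\;\frac{C\Lambda s}{\rho}\int_0^1 \mathbf{1}_{\dist(Z_t,\Gamma)<2\rho}\,dt.
\]

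After Fubini the task reduces to bounding $\iint_R \mathbf{1}_{\dist(Z_t,\Gamma)<2\rho}\,dY_1\,dY_2$ for each fixed $t$. I split the $t$-integral at $1/2$ and change variables $(Y_1,Y_2)\mapsto (Z_t,Y_2)$ on $[0,1/2]$ (Jacobian $(1-t)^{-n}$), symmetrically in $Y_2$ on $[1/2,1]$; these Jacobians are integrable on their respective half-intervals, so the endpoint singularities cause no harm. The key geometric input is that by $d$-Ahlfors regularity, for any ball $B$ of radius $R\geq \rho$,
\[
\bigl|\{Z\in B:\dist(Z,\Gamma)<\rho\}\bigr|\;\leq\;C\,R^{d}\,\rho^{n-d},
\]
obtained by covering $\Gamma\cap 2B$ by $\lesssim (R/\rho)^{d}$ balls of radius $\rho$ (possible because $\mathcal{H}^d(\Gamma\cap 2B)\leq C_0(2R)^d$) and summing their concentric doubles' volumes. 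Inserting this estimate with $R\asymp s$ yields
\[
|\{\text{bad pairs}\}|\;\leq\;C\,\Lambda\,s^{n+d+1}\,\rho^{\,n-d-1}.
\]

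The hypothesis $d<n-1$ enters here through the positivity of the exponent $n-d-1$; requiring the last bound to be strictly less than $|R|\asymp s^{2n}$ gives $\rho\leq c'\Lambda^{-1/(n-1-d)}s$. Since $d\geq 1$, this is in fact stronger than the claimed $c\Lambda^{-d/(n-1-d)}s$ and thus more than suffices, giving the conclusion with an appropriate choice of the constant $c$. I expect the principal technical obstacle to be the clean bookkeeping in the Fubini/change-of-variables step, specifically handling the unbounded but integrable Jacobians $(1-t)^{-n}$ and $t^{-n}$ near the endpoints of $[0,1]$ (the symmetric split at $1/2$ neutralizes this), and the slightly subtle point that a single close-to-$\Gamma$ point on a segment must be upgraded to a full $t$-interval in order to beat the codimensional shortfall of a one-dimensional segment inside $\RR^n$.
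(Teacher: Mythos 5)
This lemma is not proved in the present paper; it is imported verbatim from Lemma 2.1 of \cite{elliptic}, so there is no in-paper proof to compare against. Your argument, however, is correct and complete in its essentials, and it is of the same general type as the one in \cite{elliptic}: a pigeonhole over perturbations of the segment, with Ahlfors regularity controlling the measure of the ``blocked'' configurations. The main difference is that you average over the full $2n$-parameter family $B(X_1,s/2)\times B(X_2,s/2)$ and run the Fubini in the segment parameter $t$, rather than (as in \cite{elliptic}) over an $n$-parameter family of translates; because your per-$t$ estimate uses only the portion of $\Gamma$ in a ball of radius $\asymp s$ around $Z_t$ (total contribution $\lesssim s^{n+d}\rho^{n-d}$ after the $t$-integration), you avoid the crude covering of $\Gamma\cap B(X_1,C\Lambda s)$ by $(\Lambda s/\rho)^d$ balls, and this is exactly why you land on the exponent $-1/(n-1-d)$ instead of $-d/(n-1-d)$; for $d\geq 1$ and $\Lambda\geq 1$ this is indeed a stronger conclusion and implies the stated one (only the degenerate case $d=0$, where $\Gamma$ is a finite point set, would need a separate trivial argument). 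Two small points of bookkeeping: after splitting at $t=1/2$ the Jacobians $(1-t)^{-n}$ and $t^{-n}$ are in fact \emph{bounded} by $2^n$ on their respective halves, so there is no endpoint singularity left to integrate; and your covering bound $\lvert\{Z\in B:\dist(Z,\Gamma)<\rho\}\rvert\leq CR^d\rho^{n-d}$ uses the lower Ahlfors bound as well as the upper one (the upper bound controls $\mathcal{H}^d(\Gamma\cap 2B)$, while the lower bound is what caps the number of disjoint $\rho$-balls at $\lesssim(R/\rho)^d$). With those clarifications the proof stands.
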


\begin{remark}\label{rmk:Hcc}
	Note that 
\begin{equation}\label{eq:Hclength}
	|Y_1 - Y_2| \leq |Y_1 - X_1| + |X_1 - X_2| + |X_2 + Y_2| < 2\Lambda s. 
\end{equation} 
Let $\tau = c\Lambda^{-d/(n-1-d)} s$ and $Z_1 = Y_1$. For $2\leq j \leq N$ let $Z_j$ be consecutive points on the line segment $[Y_1, Y_2]$ such that $|Z_j - Z_{j-1}| = \tau/3$. Then
\[ (N-1) \frac{\tau}{3} \leq |Y_1 - Y_2| < N \frac{\tau}{3}. \]
Combined with \eqref{eq:Hclength} we get that the integer
\begin{equation}\label{eq:Hccount}
	N \sim \frac{|Y_1- Y_2|}{\tau/3} \lesssim \Lambda^{\frac{n-1}{n-1-d}}.
\end{equation}
Let $B_0 = B(X_1, s/2), B_j = B(Z_j, \tau/4)$ for $1\leq j\leq N$ and $B_{N+1} = B(X_2, s/2) $. Clearly $B_j \cap B_{j+1} \neq \emptyset$ for all $0\leq j\leq N$. Moreover $\dist(B_0, \Gamma), \dist(B_{N+1}, \Gamma) \geq s/2$ and for $1\leq j\leq N$,
\begin{equation}\label{eq:Hclb}
	\dist(B_j,\Gamma) \geq \frac{3}{4}\tau = \frac{3}{4} c\Lambda^{-\frac{d}{n-1-d}} s,
\end{equation}
and
\begin{equation}\label{eq:Hcub}
	\dist(B_j,\Gamma) \leq \min\{\delta(X_1), \delta(X_2)\} + \frac{s}{2} + |Y_1 - Y_2| < \min\{\delta(X_1), \delta(X_2)\} + 3\Lambda s.
\end{equation}
\end{remark}

\begin{lemma}[estimates on the weight, Lemma 2.3 of \cite{elliptic}]\label{lm:weight}
	\leavevmode
	\begin{enumerate}[(i)]
		\item For any $\theta>0$ there exists $C_{\theta}>0$ such that for any $X\in\RR^n$ and $r>0$ satisfying $\delta(X) \geq (1+\theta) r$,
			\begin{equation}\label{eq:weightin}
				C_{\theta}^{-1} r^n w(X) \leq m\left(B(X,r)\right) = \iint_{B(X,r)} w(z) dz \leq Cr^n w(X).
			\end{equation}
		\item There exists $C>0$ such that for any $q\in\Gamma$ and $r>0$,
	\begin{equation}\label{eq:weight}
		C^{-1} r^{d+1} \leq m\left(B(q,r)\right) = \iint_{B(q,r)\cap\Omega} w(z) dz \leq Cr^{d+1}.
	\end{equation}
	\end{enumerate}
\end{lemma}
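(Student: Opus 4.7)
Part (i) is essentially a consequence of the fact that $\delta$ is $1$-Lipschitz. For every $z\in B(X,r)$ the triangle inequality gives $\delta(X)-r\le\delta(z)\le\delta(X)+r$, and the hypothesis $\delta(X)\ge(1+\theta)r$ then forces
$$\frac{\theta}{1+\theta}\,\delta(X)\;\le\;\delta(z)\;\le\;\frac{2+\theta}{1+\theta}\,\delta(X).$$
Hence $w(z)=\delta(z)^{d-n+1}$ is pointwise comparable to $w(X)$ on $B(X,r)$, with constants depending only on $\theta$ (the sign of the exponent is irrelevant because the ratio is pinched from both sides). Integrating and using $|B(X,r)|\sim r^n$ gives \eqref{eq:weightin}.

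For the lower bound in part (ii), I would produce a ``corkscrew'' point $A\in B(q,r)\cap\Omega$ with $\delta(A)\gtrsim r$; this is essentially automatic because the $(r/10)$-neighborhood of $\Gamma\cap B(q,2r)$ has Lebesgue measure at most $C\,r^d\,(r/10)^{n-d}$ by Ahlfors regularity, which is a strict fraction of $|B(q,r/2)|\sim r^n$, so $B(q,r/2)$ contains points with $\delta\gtrsim r$. Then $B(A,\delta(A)/2)\subset B(q,r)\cap\Omega$ satisfies the hypothesis of (i) with $\theta=1$, and part (i) yields
$$m(B(q,r))\;\ge\;\iint_{B(A,\delta(A)/2)} w\,dz\;\gtrsim\;r^{n}\cdot r^{d-n+1}\;=\;r^{d+1}.$$

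The heart of the matter, and the only place where $d$-Ahlfors regularity of $\Gamma$ genuinely enters, is the upper bound in (ii). I would decompose
$$B(q,r)\cap\Omega\;=\;\bigsqcup_{k\ge 0} W_k,\qquad W_k:=\bigl\{z\in B(q,r):\,2^{-k-1}r\le\delta(z)<2^{-k}r\bigr\},$$
up to a null set. Every $z\in W_k$ lies within distance $2^{-k}r$ of $\Gamma\cap B(q,2r)$, so Ahlfors regularity allows me to cover that portion of $\Gamma$ by at most $C\,2^{kd}$ balls of radius $2^{-k}r$; mildly enlarging them yields a cover of $W_k$, whence
$$|W_k|\;\lesssim\;2^{kd}\cdot(2^{-k}r)^n\;=\;2^{-k(n-d)}r^n.$$
On $W_k$ one has $w(z)\sim(2^{-k}r)^{d-n+1}$, and the arithmetic identity $(n-d)+(d-n+1)=1$ then yields
$$\iint_{W_k} w\,dz\;\lesssim\;2^{-k(n-d)}r^n\cdot(2^{-k}r)^{d-n+1}\;=\;2^{-k}\,r^{d+1}.$$
Summing the geometric series in $k\ge 0$ completes \eqref{eq:weight}.

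The principal obstacle I anticipate is the packing step for $|W_k|$: it is the only point that really exploits the $d$-dimensional character of $\Gamma$, and the exponent $d+1$ in \eqref{eq:weight} comes out cleanly only because the number of covering balls at scale $2^{-k}r$ is exactly the $2^{kd}$ dictated by Ahlfors regularity, with no logarithmic overhead. Everything else reduces to the Lipschitz continuity of $\delta$ and the corkscrew-style existence argument in the preceding paragraph.
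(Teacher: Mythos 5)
This lemma is imported verbatim from \cite{elliptic} (Lemma 2.3 there) and the present paper gives no proof of it, so there is nothing internal to compare against; judged on its own, your argument is correct and is the standard one. Part (i) is exactly the pointwise comparability $w(z)\sim_\theta w(X)$ coming from the $1$-Lipschitz character of $\delta$ (note that for the same reason the upper constant in \eqref{eq:weightin} should also be $\theta$-dependent, so the bare $C$ there is best read as $C_\theta$), and your dyadic-shell decomposition $W_k$ with the $2^{kd}$-ball covering is precisely the Vitali-type argument the paper itself deploys to prove the closely related Lemma \ref{lm:delta}. Two cosmetic points: for the lower bound you could simply invoke the corkscrew point of Lemma \ref{lm:cksc} rather than rederive it, and if you do rederive it the radius $r/10$ should be $\epsilon r$ with $\epsilon$ small depending on $n$, $d$ and the Ahlfors constant $C_0$, since making the neighborhood of $\Gamma\cap B(q,2r)$ occupy a strict fraction of $|B(q,r/2)|$ requires the covering constant to be beaten. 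Neither point affects the validity of the proof.
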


From the above we deduce the following estimate, which will be needed later.
\begin{lemma}\label{lm:delta} 
Let $\Gamma$ be $d$-Ahlfors regular. For any $\alpha>-1$, we have
	\begin{equation}\label{eq:deltaintegral}
		\iint_{T(2\Delta)} \delta(X)^{\alpha} dm(X) \lesssim r^{d+1+\alpha}. 
	\end{equation} 
\end{lemma}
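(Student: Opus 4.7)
The idea is to unpack the definition, carry out a dyadic decomposition of $T(2\Delta)$ according to $\delta(X)$, and sum a geometric series using Lemma \ref{lm:weight}. Recall $dm(X) = \delta(X)^{d-n+1}\,dX$, and on $T(2\Delta)$ we have $\delta(X) \leq 2r$. So the case $\alpha \geq 0$ is immediate: since $\delta(X)^\alpha \leq (2r)^\alpha$ on $T(2\Delta)$, Lemma \ref{lm:weight}(ii) gives
\[
\iint_{T(2\Delta)} \delta(X)^\alpha\,dm(X) \leq (2r)^\alpha\,m\bigl(B(q,2r)\bigr) \lesssim r^{\alpha+d+1}.
\]
The interesting case is $-1 < \alpha < 0$.

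For this range, set
\[
A_k := \bigl\{X \in T(2\Delta)\,:\, 2^{-k-1}(2r) < \delta(X) \leq 2^{-k}(2r)\bigr\}, \qquad k\geq 0,
\]
so that $T(2\Delta) = \bigsqcup_{k\geq 0} A_k$ (ignoring a set of measure zero). On each $A_k$ we have $\delta(X) \sim 2^{-k}r$. I would then estimate $m(A_k)$ as follows. By $d$-Ahlfors regularity of $\Gamma$, the surface ball $\Delta(q,4r)$ can be covered by $\lesssim 2^{kd}$ surface balls $\Delta(q_i, 2^{-k}r)$, $q_i \in \Gamma$. Any $X \in A_k$ has some $q_X \in \Gamma$ with $|X-q_X| = \delta(X) \leq 2^{-k+1}r$, and $|q_X - q| \leq 4r$, so $q_X$ belongs to one of the surface balls $\Delta(q_i, 2^{-k}r)$, which forces $X \in B(q_i, C\,2^{-k}r)$ for some fixed $C$. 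Hence
\[
m(A_k) \leq \sum_i m\bigl(B(q_i, C\,2^{-k}r)\bigr) \lesssim 2^{kd}\bigl(2^{-k}r\bigr)^{d+1} = 2^{-k} r^{d+1},
\]
where the bound on each $m(B(q_i,\cdot))$ is Lemma \ref{lm:weight}(ii).

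Plugging this in,
\[
\iint_{A_k} \delta(X)^\alpha\,dm(X) \lesssim \bigl(2^{-k}r\bigr)^\alpha\,m(A_k) \lesssim 2^{-k(\alpha+1)}\,r^{\alpha+d+1},
\]
and since $\alpha+1 > 0$ the geometric series $\sum_{k\geq 0} 2^{-k(\alpha+1)}$ converges, yielding the desired bound $\lesssim r^{\alpha+d+1}$. The only step requiring genuine care is the volume bound on the annular region $A_k$, which is where Ahlfors regularity enters crucially (through the $2^{kd}$ covering count) and is precisely what makes the integral tractable even though $\delta(X)^\alpha$ is singular at $\Gamma$ for $\alpha<0$; the rest is bookkeeping.
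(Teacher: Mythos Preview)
Your proof is correct and follows essentially the same approach as the paper: a dyadic decomposition of $T(2\Delta)$ into annuli according to $\delta(X)$, a covering argument using Ahlfors regularity to bound the number of balls of radius $\sim 2^{-k}r$ needed (yielding $m(A_k)\lesssim 2^{-k}r^{d+1}$ via Lemma~\ref{lm:weight}(ii)), and summation of the resulting geometric series using $\alpha+1>0$. The paper carries out the covering step explicitly via a Vitali argument and treats all $\alpha>-1$ at once rather than separating the trivial case $\alpha\geq 0$, but these are cosmetic differences.
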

\begin{proof}
	The proof is a simple use of Vitali covering.
	For $j= 0,1,\cdots$ let 
	\[ T_j = T(2\Delta) \cap \{x\in\Omega: 2^{-j}r \leq \delta(X) < 2^{-j+1}r\}, \]
	\[ T_{>j} = T(2\Delta) \cap \{x\in\Omega: \delta(X) < 2^{-j+1}r\}. \]
	Then
	\begin{equation}
		\iint_{T(2\Delta)} \delta(X)^{\alpha} dm(X) = \sum_{j=0}^{\infty} \iint_{T_j} \delta(X)^\alpha dm(X) \leq \sum_{j=0}^{\infty} (2^{-j}r)^{\alpha} m(T_{>j}). \label{eq:intdelta}
	\end{equation}

	For every fixed $j$, we consider a covering of $4\Delta$ by $\bigcup\limits_{q\in 4\Delta} B(q, 2^{-j+1}r/5)$, from which one can extract a countable Vitali sub-covering $4\Delta \subset \cup_{k} B(q_k, 2^{-j+1}r)$,	where $q_k\in 4\Delta$ and the balls $B_k = B(q_k, 2^{-j+1}r/5)$ are pairwise disjoint. The fact that $q_k\in 4\Delta = \Delta(q_0, 4r)$ implies 
	\[ B_k := B\left(q_k, \frac{2^{-j+1}r}{5}\right) \subset B\left(q_0,4r + \frac{2^{-j+1}r}{5} \right).  \]
	And the pairwise disjointness of $B_k$'s implies that for every fixed $j$, there are only finitely many of them. In fact,
	\begin{equation}
		\sum\limits_{k} \sigma(B_k) = \sigma\left( \bigcup\limits_{k} B_k \right) \leq \sigma\left( \Delta\left(q_0, 4r+ \frac{2^{-j+1}r}{5}\right)\right) \lesssim \left( 4r + \frac{2r}{5}\right)^{d}. \label{eq:disjoint}
	\end{equation} 
	%\reversemarginpar
	Note that $\sigma(B_k) \approx \left( 2^{-j+1}r/5 \right)^{d}$ independent of $k$.
	Let $N_j$ be the number of $B_k$'s, by \eqref{eq:disjoint}
	\begin{equation}
		N_j\cdot \left( \frac{2^{-j+1}r}{5} \right)^{d} \leq \left( 4r + \frac{2r}{5}\right)^{d} , \qquad \text{thus } N_j\lesssim 2^{jd}. \label{eq:boundN}
	\end{equation}
	
	For any $X\in T_{>j}$, let $q_X\in\pO$ be such that $|X-q_X|=\delta(X)$. Then
	\begin{equation} \label{eq:closetoX}
		|q_X-q_0| \leq |q_X-X| + |X - q_0| < 4r, \qquad \text{i.e. } q_X\in 4\Delta.
	\end{equation}
	Hence $q_X \in B(q_k, 2^{-j+1}r)$ for some $k$. Moreover $T_{>j} \subset \bigcup\limits_{k} B(q_k, 2\cdot 2^{-j+1}r)$.
	Therefore by \eqref{eq:boundN} and \eqref{eq:weight},
	\[ m(T_{>j}) \leq N_j \cdot \sup_{k} m\left(B(q_k, 2\cdot 2^{-j+1}r)\right) \lesssim 2^{jd} \left(2^{-j} r\right)^{d+1} \sim 2^{-j}r^{d+1}. \]
	Combined with \eqref{eq:intdelta} we get
	\[		\iint_{T(2\Delta)} \delta(X)^{\alpha} dm(X) \lesssim \sum_{j=0}^{\infty} (2^{-j}r)^{\alpha}\cdot 2^{-j}r^{d+1} = r^{d+1+\alpha} \sum_{j=0}^{\infty} 2^{-j(\alpha+1)} \lesssim r^{d+1+\alpha}. \]
	The last sum is convergent because $\alpha+1 >0$.	
\end{proof}

Now we define the suitable function spaces.
We denote by $C_0^0(\Gamma)$ the space of compactly supported continuous functions on $\Gamma$, that is, $f\in C_0^0(\Gamma)$ if $f$ is defined and continuous on $\Gamma$, and there exists a surface ball $\Delta$ such that $\supp f \subset \Delta$. 
We consider the weighted Sobolev space
\begin{equation}\label{def:W}
	W = \dot W_{w}^{1,2}(\Omega) = \{u\in L_{loc}^1(\Omega): \nabla u \in L^2(\Omega, dm) \}
\end{equation}
and set $\|u\|_W = \left( \iint_{\Omega} |\nabla u(X)|^2 dm(X) \right)^{\frac{1}{2}}$ for $u\in W$. 
In fact, it was proved in Lemma 3.3 of \cite{elliptic} that since $\Gamma$ is $d$-Ahlfors regular with $d<n-1$,
\begin{equation}
	W = \{u\in L_{loc}^1(\RR^n): \nabla u \in L^2(\RR^n, dm) \}.
\end{equation}
We also define a local version of $W$ as follows: Let $E\subset \RR^n$ be an open set, define
\begin{equation}
	W_r(E) = \{u\in L_{loc}^1(E): \varphi u \in W \text{ for all } \varphi \in C_0^\infty(E) \}.
\end{equation}
As observed in \cite{elliptic},
\begin{equation}\label{def:Wr}
	W_r(E) = \{u\in L^1_{loc}(E):\nabla u \in L_{loc}^2(E,dm) \}.
\end{equation}
It is easy to see that if $E\subset F$ are open subsets of $ \RR^n$, then the function space $W_r(F) \subset W_r(E)$.
We set
\begin{equation}
	H = \dot H^{\frac{1}{2}}(\Gamma) = \left\{ g\text{ a measurable function on } \Gamma:  \int_{\Gamma} \int_{\Gamma} \frac{|g(x) - g(y)|^2}{|x-y|^{d+1}} d\sigma(x) d\sigma(y) <\infty \right\}.
\end{equation}
The reader may recognize this is the homogeneous Sobolev space, 
%-Slobodeckij space, an $L^2$-H\"older space 
a special case of the Besov spaces.
The authors in \cite{elliptic} were able to define a trace operator $T: W\to H$, see Theorem 3.13 (and Lemma 8.3 for a local version $T: W_r(E) \to L_{loc}^1(\Gamma\cap E)$) there.

\begin{lemma}[interior Caccioppoli inequality, Lemma 8.26 of \cite{elliptic}]\label{lm:intCcpl}
	Let $E\subset \Omega$ be an open set, and let $u\in W_r(E)$ be a non-negative solution in $E$. Then for any $\phi\in C_0^\infty(E)$,
	\begin{equation}\label{eq:intCcplwithtf}
		\iint_\Omega \phi^2 |\nabla u|^2 dm \leq C \iint_\Omega |\nabla \phi|^2 u^2 dm,
	\end{equation}
	where $C$ depends only on $n, d$ and $ C_1$.
	
	In particular, if $B$ is a ball of radius $r$ such that $2B \subset \Omega$ and $u\in W_r(2B)$ is a non-negative sub-solution in $2B$, then 
	\begin{equation}\label{eq:intCcpl}
		\iint_B |\nabla u|^2 dm \leq C r^{-2} \iint_{2B} u^2 dm.
	\end{equation}
\end{lemma}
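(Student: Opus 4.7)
The plan is to run the classical Caccioppoli test-function argument, adapted to the degenerate setting, using $\phi^2 u$ as a test function in the weak formulation of $Lu = 0$. Since $u \in W_r(E)$ is non-negative and $\phi \in C_0^\infty(E)$, the product $\phi^2 u$ is non-negative, compactly supported in $E$, and lies in $W$ (with zero trace since its support does not meet $\Gamma$); this last point needs a brief justification, probably via a truncation $u_k = \min(u,k)$ together with density of $C_0^\infty(\Omega)$ in the subspace of $W$ of functions with support in $E$, relying on the local $L^2(dm)$ bound for $\nabla u$ afforded by membership in $W_r(E)$ and on local boundedness of solutions established in \cite{elliptic}. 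Modulo this approximation, one gets
\begin{equation*}
0 = \iint_\Omega A(X)\nabla u \cdot \nabla(\phi^2 u)\,dX = \iint_\Omega \phi^2\, A\nabla u\cdot \nabla u\,dX + 2\iint_\Omega \phi u\, A\nabla u \cdot \nabla\phi\,dX.
\end{equation*}

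Now I would invoke the ellipticity bounds \eqref{eq:Aub}--\eqref{eq:Alb} with $w(X)=\delta(X)^{d-n+1}$. The first integral on the right is bounded below by $C_1^{-1} \iint \phi^2 |\nabla u|^2 \,dm$, while the second is bounded, via Cauchy--Schwarz and \eqref{eq:Aub}, by $2 C_1 \iint |\phi||\nabla u|\,|u||\nabla\phi|\,dm$. Applying Young's inequality $2ab \leq \varepsilon a^2 + \varepsilon^{-1} b^2$ and choosing $\varepsilon$ sufficiently small (depending only on $C_1$) lets me absorb an $\varepsilon \iint \phi^2 |\nabla u|^2\,dm$ term into the left, leaving
\begin{equation*}
\iint_\Omega \phi^2 |\nabla u|^2\,dm \leq C \iint_\Omega u^2 |\nabla\phi|^2\,dm,
\end{equation*}
with $C = C(n,d,C_1)$, which is \eqref{eq:intCcplwithtf}. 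For the sub-solution variant, the identity above becomes an inequality $\le 0$ of the same sign (since $\phi^2 u \geq 0$ is an admissible non-negative test function), and the argument proceeds identically.

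For the localized statement \eqref{eq:intCcpl}, I would choose a standard smooth cutoff $\phi \in C_0^\infty(2B)$ with $\phi \equiv 1$ on $B$, $0\leq \phi\leq 1$, and $|\nabla \phi|\leq C/r$; then $\iint_B |\nabla u|^2\,dm \leq \iint_{2B} \phi^2 |\nabla u|^2\,dm \leq C r^{-2}\iint_{2B} u^2\,dm$ follows immediately from \eqref{eq:intCcplwithtf}. The only genuine subtlety, and the step I expect to require the most care, is the justification that $\phi^2 u$ is a legitimate test function in the weighted weak formulation: the product rule computation of $\nabla(\phi^2 u)$ is formal, and making it rigorous requires the density/truncation argument sketched above, using that the weight $w$ is locally bounded above and below on $\operatorname{supp}\phi \subset\subset \Omega$ so that the weighted space reduces to an unweighted $H^1$ on that support.
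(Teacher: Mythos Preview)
Your argument is correct and is the standard Caccioppoli test-function computation adapted to the weighted setting. Note, however, that the paper does not actually prove this lemma: it is stated as Lemma~8.26 of \cite{elliptic} and quoted without proof, so there is no in-paper argument to compare against. Your approach is precisely the classical one that \cite{elliptic} uses, and your identification of the only genuine subtlety---that $\phi^2 u$ is an admissible test function, handled by truncation and the fact that $w$ is locally bounded above and below on $\operatorname{supp}\phi \Subset \Omega$---is accurate.
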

\begin{remark}
	\eqref{eq:intCcpl} holds if we replace $2B$ by $(1+\tau)B$, $\tau>0$, and in that case the constant $C$ depends on the value of $\tau$.
\end{remark}

\begin{lemma}[Harnack inequality, Lemmas 8.42 and 8.44 of \cite{elliptic}]\label{lm:Hnk}
	\leavevmode
	
	 \begin{enumerate}
	 	\item Let $B$ be a ball such that $3B \subset \Omega$ and let $u\in W_r(3B)$ be a non-negative solution in $3B$. Then 
	 		\begin{equation}\label{eq:Hnk}
	 			\sup_B u \leq C\inf_B u,
	 		\end{equation}
	 		where $C$ depends on $n, d$ and $C_1$.
	 	\item Let $K$ be a compact set of $\Omega$ and $u\in W_r(\Omega)$ be a non-negative solution in $\Omega$. Then
			\begin{equation}\label{eq:Hnkcpt}
				\sup_K u \leq C_K \inf_K u,
			\end{equation}
			where $C_K$ depends only on $n$, $d$, $C_0$, $C_1$, $\dist(K,\Gamma)$ and $\diam K$.
	 \end{enumerate}
\end{lemma}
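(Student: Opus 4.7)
The plan is to reduce the Harnack inequality for the degenerate operator $L=-\divg(A\nabla)$ to the classical Moser Harnack inequality for uniformly elliptic operators in divergence form, exploiting that the weight $w(X)=\delta(X)^{d-n+1}$ encoding the degeneracy is essentially constant on any ball that stays a definite distance from $\Gamma$.

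For part (1), I would fix $B=B(X_0,r)$ with $3B\subset\Omega$, so that $\delta(X_0)\geq 3r$. Any $Y\in 3B$ then satisfies $\delta(Y)\in[\delta(X_0)-3r,\delta(X_0)+3r]$, and hence $\delta(Y)$ is comparable to $\delta(X_0)$ with absolute constants. Setting $w_0:=\delta(X_0)^{d-n+1}$, I would introduce the rescaled matrix $\widehat{A}(X):=w_0^{-1}A(X)$. Combining \eqref{eq:Aub}--\eqref{eq:Alb} with the comparison of $\delta$'s, $\widehat{A}$ is uniformly elliptic on $3B$ with ellipticity constants depending only on $n,d,C_1$. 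Since $u$ weakly solves $\divg(A\nabla u)=0$ on $3B$ if and only if it weakly solves $\divg(\widehat{A}\nabla u)=0$ (the nonzero scalar $w_0^{-1}$ factors out of the weak formulation), the classical De Giorgi--Nash--Moser Harnack inequality applies and directly yields \eqref{eq:Hnk}, with the stated constant.

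For part (2), I would propagate the local estimate from part (1) along a Harnack chain produced by Lemma \ref{lm:Hcc}. Given $Y_1,Y_2\in K$, I would apply that lemma with $s\sim\dist(K,\Gamma)$ and $\Lambda\sim\diam(K)/\dist(K,\Gamma)$ to obtain a finite sequence of balls $B^0,\ldots,B^{N+1}$ connecting $Y_1$ to $Y_2$. By \eqref{eq:Hclb} the interior balls $B^j$ satisfy $\dist(B^j,\Gamma)\geq (3/4)\tau$ where $\tau$ is proportional to their radius, so up to a harmless shrinking by a fixed universal factor one can ensure $3B^j\subset\Omega$; the endpoints are handled similarly by using radius $\sim\delta(X_i)/6$. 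By \eqref{eq:Hccount}, $N$ is bounded by a constant depending only on $n,d,C_0$, and $\Lambda$. Iterating part (1) along the chain (and using continuity of $u$, which is known a priori from the theory developed in \cite{elliptic}) then gives $u(Y_1)\leq C^{N+2}u(Y_2)$ for any $Y_1,Y_2\in K$; taking supremum and infimum over $K$ produces \eqref{eq:Hnkcpt}.

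The main technical concern, and the reason Lemma \ref{lm:Hcc} is indispensable (rather than just arcwise connectedness of $\Omega$), is that the balls in the chain must be far from $\Gamma$ in proportion to their own radius — so that part (1) applies — and that the total number of balls must be controlled quantitatively. The estimates \eqref{eq:Hclb}--\eqref{eq:Hccount} furnish exactly this, and the polynomial length bound $N\lesssim\Lambda^{(n-1)/(n-1-d)}$ is precisely what produces the claimed dependence of $C_K$ on the ratio $\diam(K)/\dist(K,\Gamma)$.
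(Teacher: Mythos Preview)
The paper does not supply its own proof of this lemma: it is quoted verbatim from \cite{elliptic} (Lemmas 8.42 and 8.44 there) as a preliminary fact, with no argument given. So there is no in-paper proof to compare against.

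That said, your proposal is correct and is essentially the standard route. One small point of care in part (1): the comparability $\delta(Y)\sim\delta(X_0)$ with absolute constants holds on $2B$ (where $\delta(Y)\geq\delta(X_0)-2r\geq r$, hence $\delta(Y)/\delta(X_0)\in[1/3,5/3]$), but not uniformly on all of $3B$ when $\delta(X_0)=3r$. This is harmless, since classical Moser--Harnack only requires uniform ellipticity on $2B$ to conclude \eqref{eq:Hnk} on $B$. Your chaining argument for part (2) via Lemma~\ref{lm:Hcc} and Remark~\ref{rmk:Hcc} is exactly the intended mechanism, and your observation that the quantitative bounds \eqref{eq:Hccount}, \eqref{eq:Hclb} are what make $C_K$ depend only on $n,d,C_0,C_1,\dist(K,\Gamma),\diam K$ is on point.
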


\begin{lemma}[boundary Caccioppoli inequality, Lemma 8.47 of \cite{elliptic}]
	Let $B\subset \RR^n$ be a ball centered on $\Gamma$ of radius $r$, and let $u\in W_r(2B)$ be a non-negative subsolution in $2B\setminus \Gamma$ such that $Tu=0$ a.e. on $2B$. Then for any $\phi \in C_0^\infty(2B)$,
	\begin{equation}\label{eq:bdCcplintg}
		\iint_{2B} \phi^2 |\nabla u|^2 dm \leq C \iint_{2B} |\nabla \phi|^2 u^2 dm,
	\end{equation}
	where $C$ depends on $n, d$ and $C_1$. In particular \eqref{eq:bdCcplintg} implies that
	\begin{equation}\label{eq:bdCcpl}
		\iint_{B} |\nabla u|^2 dm \leq C r^{-2} \iint_{2B} u^2 dm.
	\end{equation}
\end{lemma}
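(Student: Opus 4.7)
The strategy is the classical Caccioppoli test-function trick, adapted to the degenerate weighted setting. I would take $\varphi = \phi^{2}u$, which is non-negative, compactly supported in $2B$, and (heuristically) has vanishing trace on $\Gamma$ by $Tu=0$ together with a Leibniz rule for $T$. Granting for now that $\varphi$ is an admissible test function for the subsolution $u$, the inequality $\iint_{2B} A\nabla u\cdot\nabla(\phi^{2}u)\,dX \leq 0$ holds. Expanding $\nabla(\phi^{2}u)=2\phi u\nabla\phi+\phi^{2}\nabla u$ and rearranging gives
\begin{equation*}
\iint_{2B} \phi^{2}\,A\nabla u\cdot\nabla u\,dX \leq -2\iint_{2B} \phi u\,A\nabla u\cdot\nabla\phi\,dX.
\end{equation*}

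On the left, the ellipticity lower bound \eqref{eq:Alb} produces a quantity comparable to $\iint\phi^{2}|\nabla u|^{2}\,dm$; on the right, the upper bound \eqref{eq:Aub} together with Cauchy--Schwarz produces an expression bounded by a constant multiple of $\iint \phi\, u\,|\nabla u|\,|\nabla\phi|\,dm$. Applying Young's inequality $2ab\leq \varepsilon a^{2}+\varepsilon^{-1}b^{2}$ with $\varepsilon$ small enough (depending only on $C_{1}$) lets us absorb the resulting $\iint\phi^{2}|\nabla u|^{2}\,dm$ term into the left-hand side, yielding \eqref{eq:bdCcplintg}. The estimate \eqref{eq:bdCcpl} then follows by specializing $\phi\in C_{0}^{\infty}(2B)$ to a smooth cutoff with $\phi\equiv 1$ on $B$, $\supp\phi\subset \tfrac{3}{2}B$, and $|\nabla\phi|\leq C/r$.

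The main obstacle is justifying that $\varphi = \phi^{2}u$ is admissible, since $u$ is only assumed to be a subsolution on the open set $2B\setminus\Gamma$ and $\varphi$ need not be compactly supported away from $\Gamma$. The natural route is to truncate near $\Gamma$: introduce $\eta_{\epsilon}\in C^{\infty}(\RR^{n})$ with $\eta_{\epsilon}\equiv 0$ on $\{\delta(X)<\epsilon\}$, $\eta_{\epsilon}\equiv 1$ on $\{\delta(X)>2\epsilon\}$, and $|\nabla\eta_{\epsilon}|\lesssim \epsilon^{-1}$; then $\phi^{2}u\eta_{\epsilon}$ is compactly supported in $2B\setminus\Gamma$ and is an admissible test function, so
\begin{equation*}
\iint_{2B} A\nabla u\cdot\nabla(\phi^{2}u\eta_{\epsilon})\,dX \leq 0.
\end{equation*}
Passing to the limit $\epsilon\to 0$ requires showing that the boundary-layer error
\begin{equation*}
\iint_{\{\epsilon<\delta(X)<2\epsilon\}\cap 2B} |A\nabla u|\,u\,\phi^{2}\,|\nabla\eta_{\epsilon}|\,dX
\end{equation*}
vanishes. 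This is precisely where the trace hypothesis $Tu=0$ enters: it supplies control of $u$ in $\epsilon$-neighborhoods of $\Gamma$ (via a Hardy/Poincar\'e-type estimate embedded in the trace construction of \cite{elliptic}) sufficient to defeat the $\epsilon^{-1}$ blow-up of $|\nabla\eta_{\epsilon}|$ paired against the degenerate weight $\delta^{d-n+1}$. Alternatively, one may invoke a density statement that elements of $W$ with vanishing trace are $W$-norm limits of functions in $C_{0}^{\infty}(\Omega)$. Once this passage to the limit is justified, the absorption argument above completes the proof.
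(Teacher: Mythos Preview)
The paper does not supply its own proof of this lemma: it is stated in Section~\ref{sect:prelim} as a preliminary result quoted from \cite{elliptic} (Lemma~8.47 there), alongside other lemmas from that reference. The paper explicitly says that the only new lemmas in Section~\ref{sect:prelim} are Lemmas~\ref{lm:delta}, \ref{lm:fchiE}, and \ref{lm:Poincare}. So there is no in-paper proof to compare against.

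That said, your outline is the standard one and is essentially what the proof in \cite{elliptic} does. Two remarks on the admissibility step. First, your truncation argument as written is a little delicate: after Cauchy--Schwarz the error term is controlled by
\[
\epsilon^{-1}\Big(\iint_{\{\epsilon<\delta<2\epsilon\}\cap 2B}|\nabla u|^{2}\,dm\Big)^{1/2}\Big(\iint_{\{\epsilon<\delta<2\epsilon\}\cap 2B} u^{2}\,dm\Big)^{1/2},
\]
and while the first factor tends to zero by $\nabla u\in L^{2}_{\mathrm{loc}}(dm)$, you still need the second factor divided by $\epsilon$ to stay bounded. That is exactly a Hardy-type statement $\iint u^{2}\delta^{-2}\,dm<\infty$ for trace-zero functions, which is indeed available in \cite{elliptic} but should be cited rather than left implicit. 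Second, the cleaner route---and the one \cite{elliptic} uses---is your alternative: the density of $C_{0}^{\infty}(\Omega)$ in the subspace of $W$ with vanishing trace (their $W_{0}$), which lets you bypass the layer estimate entirely. Either way, your sketch is sound.
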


\begin{lemma}[boundary Moser estimate, Lemma 8.71 of \cite{elliptic}]\label{lm:Moser}
	Let $p>0$. Let $B$ be a ball centered on $\Gamma$ and $u\in W_r(2B)$ be a non-negative sub-solution in $2B\setminus \Gamma$ such that $Tu = 0$ a.e. on $2B$. Then
	\begin{equation}
		\sup_B u \leq C_p \left( \frac{1}{m(2B)} \iint_{2B} u^p dm \right)^{\frac{1}{p}}.
	\end{equation}
\end{lemma}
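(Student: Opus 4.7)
The plan is to perform a Moser iteration adapted to the weighted boundary setting, driven by the boundary Caccioppoli inequality \eqref{eq:bdCcpl} and a companion weighted Sobolev inequality. First I would observe that if $u\in W_r(2B)$ is a non-negative subsolution in $2B\setminus\Gamma$ with $Tu=0$ on $\Gamma\cap 2B$, then for every $\beta\geq 1$ the function $v=u^\beta$ (approached via truncations $u_\varepsilon=\min(u,1/\varepsilon)+\varepsilon$ and a limit) satisfies the Caccioppoli-type bound
\begin{equation*}
\iint \phi^2 |\nabla v|^2\, dm \leq C\beta^2 \iint |\nabla\phi|^2 v^2\, dm \qquad \text{for all } \phi\in C_0^\infty(2B).
\end{equation*}
This is obtained by testing the subsolution inequality against $\phi^2 u^{2\beta-1}$ (a legitimate test function because $Tu=0$ on $2B\cap\Gamma$ guarantees $T(\phi^2 u^{2\beta-1})=0$ as well), then applying the ellipticity bounds \eqref{eq:Aub}--\eqref{eq:Alb} and Cauchy--Schwarz in exactly the same manner as \eqref{eq:bdCcplintg}.

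The second ingredient is a weighted Sobolev--Poincar\'e inequality
\begin{equation*}
\Big(\frac{1}{m(B)} \iint_B v^{2\kappa}\, dm\Big)^{1/(2\kappa)} \leq Cr \Big(\frac{1}{m(B)} \iint_B |\nabla v|^2\, dm\Big)^{1/2}
\end{equation*}
for some $\kappa>1$, valid for $v\in W$ with $Tv=0$ on $\Gamma\cap B$. Because $m$ is doubling with $m(B(x,r))\sim r^{d+1}$ for $x\in\Gamma$ by Lemma \ref{lm:weight}, one is working in a space of homogeneous type of ``dimension'' $d+1$, and the $d$-Ahlfors regularity of $\Gamma$ implies $\Gamma\cap B$ carries positive $d$-dimensional measure, hence positive weighted capacity; this kills the constant mode of such $v$ and yields a $(2,2)$-Poincar\'e inequality, which self-improves to the displayed $(2\kappa,2)$ bound by standard methods. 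Chaining Caccioppoli and Sobolev on concentric balls $B_{r'}\subset B_r\subset 2B$ with a cutoff $\phi$ satisfying $|\nabla\phi|\leq C/(r-r')$ produces the one-step iteration
\begin{equation*}
\|u\|_{L^{2\beta\kappa}(B_{r'},dm)} \leq \Big(\frac{C\beta}{r-r'}\Big)^{1/\beta} \|u\|_{L^{2\beta}(B_r,dm)}.
\end{equation*}

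Third, I would iterate with $\beta_j=\kappa^j$ and $r_j=(1+2^{-j})r_B$; the convergence of $\sum_j \kappa^{-j}\log(C\kappa^j 2^j / r_B)$ passes to the limit and gives $\sup_B u \leq C\,(m(2B)^{-1}\iint_{2B} u^2\, dm)^{1/2}$, establishing the case $p=2$. For $p\geq 2$ the general statement is immediate from H\"older. For $0<p<2$, I would use a standard self-improvement on nested radii $\rho\in[r_B,2r_B]$: from $u^2 = u^p u^{2-p}\leq u^p (\sup_{B_{\rho'}} u)^{2-p}$ with $\rho<\rho'$, the $p=2$ estimate yields $\sup_{B_\rho} u \leq C_{\rho,\rho'} (\sup_{B_{\rho'}} u)^{1-p/2}(\fint_{2B} u^p\,dm)^{1/2}$, and Young's inequality combined with an iteration lemma on the radii absorbs the $\sup$ factor. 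The main obstacle I anticipate is the weighted Sobolev step: because $\Gamma$ has codimension $n-1-d>0$, the classical boundary Sobolev embedding does not apply verbatim near $\Gamma$, and one must work intrinsically in $(\Omega\cup\Gamma,|\cdot|,dm)$, using the capacitary contribution of $\Gamma\cap B$ to replace the usual vanishing-on-the-boundary hypothesis.
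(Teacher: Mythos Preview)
The paper does not actually prove this lemma; it is quoted verbatim from \cite{elliptic} (Lemma 8.71 there) as a preliminary result, with no argument given. So there is nothing in the present paper to compare your proof against.

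That said, your outline is the standard Moser iteration and is essentially what \cite{elliptic} does. A couple of points to tighten: the test function $\phi^2 u^{2\beta-1}$ is not a priori in the right space when $u$ is unbounded, so the truncation $u_\varepsilon$ you mention must be carried through the Caccioppoli step and the limit taken carefully (this is routine but should be stated). More substantively, the weighted $(2\kappa,2)$ Sobolev--Poincar\'e inequality you invoke is exactly the delicate ingredient here; in \cite{elliptic} it is established separately (their Theorem 6.1 / Lemma 4.13 combined with a capacity argument for the vanishing trace), and your sketch of why it holds---doubling of $m$, homogeneous dimension $d+1$, positive capacity of $\Gamma\cap B$---is correct in spirit but would need those references to be a proof. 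The self-improvement from $p=2$ down to $0<p<2$ via the iteration lemma on radii is standard and correct.
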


\begin{lemma}[boundary H\"{o}lder regularity, Lemma 8.106 of \cite{elliptic}]\label{lm:bdHolder}
	Let $B= B(q,r)$ be a ball centered on $\Gamma$ and $u\in W_r(B)$ be a solution in $B$ such that $Tu \equiv 0$ on $B$. There exists $\beta\in (0,1]$ such that for any $0<s<r/2$,
	\begin{equation}\label{eq:bdHolder}
		\underset{B(q,s)}{\osc} u \leq C \left(\frac{s}{r} \right)^\beta \left( \frac{1}{m(B)} \iint_B |u|^2 dm \right)^{\frac{1}{2}}.
	\end{equation}
\end{lemma}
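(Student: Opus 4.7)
The plan is to establish a geometric oscillation decay at the boundary and iterate it.

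Set $M(\rho) := \sup_{B(q,\rho)} u$, $m(\rho) := \inf_{B(q,\rho)} u$, and $\omega(\rho) := M(\rho) - m(\rho)$ for $0 < \rho \leq r$. The main step is to prove that there exist constants $\theta \in (0,1)$ and an integer $N \geq 4$, depending only on the allowable parameters, such that
\begin{equation*}
\omega(\rho/N) \;\leq\; \theta\,\omega(\rho) \quad \text{for all } 0 < \rho \leq r/2.
\end{equation*}
Iterating this from scale $r/2$ downward and interpolating between dyadic scales yields $\omega(s) \leq C(s/r)^\beta\,\omega(r/2)$ for every $0 < s \leq r/2$, with $\beta = \log(1/\theta)/\log N > 0$. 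To close the argument I bound $\omega(r/2) \leq 2\sup_{B(q, r/2)} |u|$ and apply the boundary Moser estimate (Lemma \ref{lm:Moser}, $p = 2$) to the non-negative subsolutions $u^+$ and $u^-$ --- both inherit zero trace from $Tu = 0$, and are subsolutions on $B \setminus \Gamma$ by the standard truncation argument --- to get
\begin{equation*}
\sup_{B(q, r/2)} |u| \;\leq\; C\bigl(\tfrac{1}{m(B)} \iint_B |u|^2\,dm\bigr)^{1/2},
\end{equation*}
and conclude \eqref{eq:bdHolder}.

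For the oscillation decay, fix $\rho \leq r/2$ and, replacing $u$ by $-u$ if needed, assume $M(\rho) \geq \omega(\rho)/2$. The function $v := M(\rho) - u$ is a non-negative solution on $B(q,\rho)$ with trace $M(\rho)$ on $\Gamma \cap B(q,\rho)$. The key claim is a weak Harnack--type lower bound: $\inf_{B(q, \rho/N)} v \geq c\,M(\rho)$ for some $c > 0$, which translates to $u \leq (1-c)M(\rho)$ on $B(q, \rho/N)$ and hence $\omega(\rho/N) \leq \omega(\rho) - cM(\rho) \leq (1 - c/2)\omega(\rho)$. To establish this lower bound I would work with the truncation $(u - M(\rho)/2)^+$, a non-negative subsolution with zero trace on $\Gamma \cap B(q,\rho)$ (since $Tu = 0 \leq M(\rho)/2$), apply the boundary Caccioppoli inequality \eqref{eq:bdCcpl} on dyadic balls $B(q, \rho \cdot 2^{-j})$, and run a De~Giorgi / Moser iteration in the weighted space, using the doubling of $m$ (Lemma \ref{lm:weight}) and an appropriate weighted Sobolev-Poincar\'e inequality.

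The main obstacle is running the weighted Moser iteration uniformly up to the lower-dimensional boundary $\Gamma$. Because $w(X) = \delta(X)^{d-n+1}$ degenerates or blows up at $\Gamma$, the classical Sobolev-Poincar\'e inequality has to be replaced by a version adapted to the measure $dm$, one that holds for balls centered on $\Gamma$. This step is made possible by the $d$-Ahlfors regularity of $\Gamma$ (which underlies the doubling of $m$) combined with the $A_2$-Muckenhoupt property of $w$ on $\Omega$. Once the weighted functional inequalities are in place, the De~Giorgi-Nash-Moser machinery proceeds essentially as in the classical setting and produces the constant $\theta$. In \cite{elliptic}, this bundle of ingredients is packaged as the boundary H\"older regularity statement quoted above.
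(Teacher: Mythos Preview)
The paper does not prove this lemma; it is quoted from \cite{elliptic} (Lemma 8.106 there) without argument. Your outline follows the classical De~Giorgi--Nash--Moser template, which is indeed the route taken in \cite{elliptic}, and the closing step---bounding $\omega(r/2)$ via the boundary Moser estimate applied to the subsolutions $u^{\pm}$---is correct.

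The gap is in the oscillation-decay step. Applying the boundary Caccioppoli/Moser machinery to the truncation $(u-M(\rho)/2)^+$ yields an $L^2\to L^\infty$ bound of the form
\[
\sup_{B(q,\rho/2)}\Bigl(u-\tfrac{M(\rho)}{2}\Bigr)^+ \;\le\; C\,\frac{M(\rho)}{2}\,\biggl(\frac{m\bigl(\{u>M(\rho)/2\}\cap B(q,\rho)\bigr)}{m(B(q,\rho))}\biggr)^{1/2},
\]
but nothing in your argument forces the density factor on the right to be strictly less than $1/C$; the level set $\{u>M(\rho)/2\}$ could a priori fill essentially all of $B(q,\rho)\cap\Omega$. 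In codimension one this is exactly where a Wiener-type capacity or density condition on the boundary portion enters. Here $\Gamma$ has $m$-measure zero, so the ``zero set'' of the truncation is invisible to a naive density lemma. What makes the decay work in \cite{elliptic} is a weighted Sobolev--Poincar\'e inequality for functions with \emph{zero trace} on $\Gamma\cap B$ (not merely the mean-zero version in Lemma~\ref{lm:ballPoincare}): the blow-up of the weight $w(X)=\delta(X)^{d-n+1}$ near $\Gamma$ endows $\Gamma$ with positive capacity in the weighted Sobolev space, and this substitutes for the classical boundary density condition. Your sketch gestures at this (``$A_2$-Muckenhoupt property'', ``appropriate weighted Sobolev--Poincar\'e''), but does not isolate the specific zero-trace Poincar\'e estimate that actually drives the contraction $\omega(\rho/N)\le\theta\,\omega(\rho)$. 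Once that inequality is in hand, the iteration you describe goes through.
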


We are interested in the solution(s) of the Dirichlet problem \eqref{ellp}.
\begin{lemma}[existence and uniqueness of solution, Lemma 9.3 of \cite{elliptic}]
	For any $f\in H$, there exists a unique $u\in W$ such that
	\begin{equation}\label{ellppr}
		\left\{\begin{array}{ll}
			Lu = 0 & \text{in } \Omega \\
			Tu = f & \text{a.e. on } \Gamma.
		\end{array} \right.
	\end{equation}
	Moreover $\|u\|_W \leq C \|f\|_H$.
\end{lemma}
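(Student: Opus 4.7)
The plan is to reduce the inhomogeneous Dirichlet problem to a homogeneous one with zero trace and then invoke the Lax–Milgram theorem on the closed subspace
\[
W_0 = \{u \in W : Tu = 0 \text{ a.e.\ on } \Gamma\}.
\]
Given $f \in H$, I would first use the surjectivity and boundedness of the trace operator $T: W \to H$ established earlier in \cite{elliptic} to choose an extension $F \in W$ with $TF = f$ a.e.\ on $\Gamma$ and $\|F\|_W \lesssim \|f\|_H$. Then I would look for the solution in the form $u = F + v$ with $v \in W_0$, which reduces the problem to finding $v \in W_0$ satisfying $\iint_\Omega A \nabla v \cdot \nabla \varphi \, dX = -\iint_\Omega A\nabla F \cdot \nabla\varphi \, dX$ for all $\varphi \in W_0$ (with the equation $Lu = 0$ interpreted against test functions in $C_0^\infty(\Omega) \subset W_0$, then extended by density).

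The bilinear form $a(v,\varphi) := \iint_\Omega A(X)\nabla v \cdot \nabla \varphi \, dX$ on $W_0 \times W_0$ is continuous and coercive: using \eqref{eq:Aub} together with Cauchy--Schwarz and recalling that $dm = \delta^{d-n+1}\,dX$,
\[
|a(u,\varphi)| \leq C_1 \iint_\Omega |\nabla u||\nabla \varphi| \, \delta^{d-n+1}\, dX \leq C_1 \|u\|_W \|\varphi\|_W,
\]
while \eqref{eq:Alb} gives
\[
a(v,v) \geq C_1^{-1} \iint_\Omega |\nabla v|^2 \, dm = C_1^{-1} \|v\|_W^2.
\]
The linear functional $\varphi \mapsto -\iint_\Omega A\nabla F \cdot \nabla\varphi\, dX$ is likewise bounded on $W_0$ with norm $\lesssim \|F\|_W$. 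The Lax–Milgram theorem therefore produces a unique $v \in W_0$ solving the variational problem, with $\|v\|_W \lesssim \|F\|_W \lesssim \|f\|_H$. Setting $u = F + v$ yields an element of $W$ with $Tu = f$ (by linearity of the trace) satisfying $Lu = 0$ weakly, since for any $\varphi \in C_0^\infty(\Omega) \subset W_0$ we have $\iint_\Omega A \nabla u \cdot \nabla \varphi \, dX = 0$ by construction, and the estimate $\|u\|_W \leq \|F\|_W + \|v\|_W \lesssim \|f\|_H$ follows.

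For uniqueness, suppose $u_1, u_2 \in W$ both solve \eqref{ellppr}. Then $w := u_1 - u_2 \in W_0$ (since $Tw = f - f = 0$) and satisfies $\iint_\Omega A\nabla w \cdot \nabla \varphi \, dX = 0$ for every $\varphi \in C_0^\infty(\Omega)$. Density of $C_0^\infty(\Omega)$ in $W_0$ (which is implicit in the setup of \cite{elliptic} and can be proved by a standard mollification/cutoff argument adapted to the weight $w$) allows us to test with $\varphi = w$, yielding $a(w,w) = 0$ and hence $\|w\|_W = 0$ by coercivity. Since functions in $W_0$ are determined up to additive constants and have zero trace on the unbounded $\Gamma$, this forces $w \equiv 0$.

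The only genuinely delicate ingredient is the existence of a bounded right inverse to the trace $T : W \to H$ and the density of $C_0^\infty(\Omega)$ in $W_0$; both are stated in (or immediately follow from) the trace theorem of \cite{elliptic} which we are taking as a black box. Everything else is a textbook Lax–Milgram argument, made possible by the fact that introducing the weight $w = \delta^{d-n+1}$ turns the degenerate coefficients $A$ into a uniformly elliptic matrix $\mathcal{A}$ with respect to the measure $dm$.
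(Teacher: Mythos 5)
This lemma is not proved in the present paper at all --- it is quoted verbatim as a preliminary from Lemma 9.3 of \cite{elliptic}, and your Lax--Milgram argument (extend $f$ to some $F\in W$ via the bounded right inverse of the trace, solve on the zero-trace subspace $W_0$, use coercivity of the weighted form for uniqueness) is essentially the proof given there. The ingredients you black-box are exactly the nontrivial ones established earlier in \cite{elliptic} --- the extension operator $H\to W$, the density of $C_0^\infty(\Omega)$ in $W_0$, and the completeness of $W_0$ under $\|\cdot\|_W$ (a genuine norm on $W_0$ since $\nabla w=0$ forces $w$ to be constant on the connected set $\Omega$ and the zero trace then kills the constant) --- so your outline is correct and matches the source.
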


\begin{lemma}[properties of solutions for $f\in C_0^0(\Gamma)$, Lemma 9.23 of \cite{elliptic}]\label{lm:dfsol}
	 There exists a bounded linear operator 
	 \[ U: C_0^0(\Gamma) \rightarrow  C(\RR^n) \]
	 such that for every $f\in C_0^0(\Gamma)$
	 \begin{enumerate}[(i)]
	 	\item the restriction of $Uf$ to $\Gamma$ is $f$;
	 	\item $\sup_{\RR^n} Uf = \sup_{\Gamma} f$ and $\int_{\RR^n} Uf = \inf_{\Gamma} f$;
	 	\item $Uf\in W_r(\Omega)$ and is a solution of $L$ in $\Omega$;
	 	\item if $B$ is a ball centered on $\Gamma$ and $f\equiv 0$ on $B$, then $Uf$ lies in $W_r(B)$;
	 	\item if $f\in C_0^0(\Gamma) \cap H$, then $Uf \in W$ and is a unique solution of \eqref{ellppr}.
	 \end{enumerate}
\end{lemma}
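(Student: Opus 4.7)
The plan is to bootstrap from the variational existence result for $f \in H$ (the preceding lemma) to a continuous solution operator on $C_0^0(\Gamma)$ via uniform approximation. For $f \in C_0^0(\Gamma) \cap H$, I would first let $Uf$ be the unique solution in $W$ produced by the preceding lemma. Interior Moser bounds and Harnack (Lemma \ref{lm:Hnk}) show $Uf$ is continuous inside $\Omega$. To get continuity up to $\Gamma$, I apply the boundary H\"older estimate (Lemma \ref{lm:bdHolder}) to $Uf-f(q_0)$ on small balls $B(q_0,s)$; the local boundary trace of $Uf-f(q_0)$ is $f-f(q_0)$, whose sup norm is controlled by the (global) modulus of continuity of $f$, so $\osc_{B(q_0,s)} Uf \to 0$ as $s\to 0$. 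Hence $Uf\in C(\RR^n)$ with $Uf|_\Gamma=f$.

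Next I would establish a maximum principle $\inf_\Gamma f \leq Uf \leq \sup_\Gamma f$ on $\RR^n$ for such $f$. Since $(Uf-\sup_\Gamma f)^+$ has zero trace and lies in $W$, it is an admissible test function in the weak formulation, and ellipticity forces $\nabla(Uf-\sup_\Gamma f)^+ \equiv 0$ in $L^2(dm)$; combined with the fact that the trace vanishes, this gives $(Uf-\sup_\Gamma f)^+\equiv 0$ a.e., and continuity upgrades this to the pointwise bound. The lower bound is symmetric.

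To extend $U$ to all of $C_0^0(\Gamma)$, take $f\in C_0^0(\Gamma)$ supported in a surface ball $\Delta_0$, and approximate $f$ uniformly by compactly supported Lipschitz functions $f_n$ on $\Gamma$ (smoothing $f$ against a Lipschitz partition of unity adapted to a fine cover of $2\Delta_0$). Ahlfors regularity makes the double integral defining $\|\cdot\|_H$ converge for Lipschitz compactly supported inputs, so $f_n\in H\cap C_0^0(\Gamma)$. By linearity and the maximum principle, $\sup_{\RR^n}|Uf_n-Uf_m|\leq \sup_\Gamma |f_n-f_m|\to 0$, so the sequence is Cauchy in $C(\RR^n)$; define $Uf$ as the uniform limit. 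Properties (i), (ii) and the linear, bounded character of $U$ pass through the limit. For (iii), on any ball $B_\ast$ with $2B_\ast\subset\Omega$, the interior Caccioppoli inequality (Lemma \ref{lm:intCcpl}) applied to $Uf_n-Uf_m$ gives uniform $L^2(dm)$ control on $\nabla(Uf_n-Uf_m)$, hence $\nabla Uf_n$ is Cauchy in $L^2_{\mathrm{loc}}(\Omega,dm)$; this places $Uf\in W_r(\Omega)$ and lets one pass to the limit in the weak equation $\iint A\nabla Uf_n\cdot\nabla\varphi\,dX=0$. Property (v) is immediate, since if $f\in H$ both $Uf$ and the variational solution belong to $W$ and the $L^\infty$ bound upgrades to a $W$-limit, with uniqueness from the preceding lemma.

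The main technical obstacle is property (iv): to invoke the boundary Caccioppoli inequality on a ball $B$ centered on $\Gamma$ where $f\equiv 0$, I need the approximants $f_n$ to vanish on (essentially) $B$ as well, not merely to be small there. I would handle this by choosing $f_n = \eta_n\cdot (\text{Lipschitz mollification of }f)$ where $\eta_n$ is a Lipschitz cutoff that vanishes on a ball $B'\Subset B$ and increases to $1$ outside a slightly larger ball; since $f\equiv 0$ on $B$, one still has $f_n\to f$ uniformly, $f_n\in H$, and each $f_n$ vanishes on $B'$, so Lemma on boundary Caccioppoli gives uniform control of $\nabla Uf_n$ in $L^2(B',dm)$ and the limit argument places $Uf\in W_r(B')$; letting $B'\nearrow B$ yields (iv). The remaining care is to check at every step that estimates are stated with respect to $dm=\delta^{d-n+1}dX$ rather than Lebesgue measure, but this is transparent given the lemmas already recorded.
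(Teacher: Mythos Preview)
This lemma is not proved in the paper: it is quoted from \cite{elliptic} (as ``Lemma~9.23 of \cite{elliptic}'') and listed among the preliminary results that are stated without proof. The paper explicitly says that the only lemmas in Section~\ref{sect:prelim} actually proved here are Lemmas~\ref{lm:delta}, \ref{lm:fchiE} and \ref{lm:Poincare}; Lemma~\ref{lm:dfsol} is imported. So there is no proof in the paper to compare your proposal against.

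That said, your outline is the standard route and is essentially what \cite{elliptic} does: solve variationally for $f\in H$, upgrade to continuity via interior and boundary De Giorgi--Nash--Moser, read off the maximum principle, then extend to $C_0^0(\Gamma)$ by uniform approximation with Lipschitz data and pass Caccioppoli-type bounds to the limit. One technical point deserves care: you invoke Lemma~\ref{lm:bdHolder} on $Uf-f(q_0)$, but as stated that lemma requires $Tu\equiv 0$ on the ball, whereas $T(Uf-f(q_0))=f-f(q_0)$ is merely small, not identically zero. You need the slightly stronger oscillation estimate that absorbs $\osc_{\Delta(q_0,r)} f$ on the right-hand side (this is what \cite{elliptic} actually proves en route to Lemma~9.23), or else a barrier/comparison argument. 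Your treatment of (iv) via cutoffs $\eta_n$ that force the approximants to vanish on $B'\Subset B$ is correct and is the natural way to feed the boundary Caccioppoli inequality.
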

\begin{remark}\label{rmk:dfsol}
	Since $Uf\in C(\RR^n)$, its trace $T(Uf)$ is exactly $f$. We also remark that $C_0^0(\Gamma) \cap H$ is dense in $C_0^0(\Gamma)$, with the supremum norm.
\end{remark}

\begin{lemma}[harmonic measure, Lemmas 9.30 and 9.33 of \cite{elliptic}]\label{lm:dfhm}
	For any $X\in\Omega$, there exists a unique positive regular Borel measure $\omega^X$ on $\Gamma$ such that
	\begin{equation}\label{eq:dfhm}
		Uf(X) = \int_{\Gamma} f d\omega^X, \quad \text{ for any } f\in C_0^0(\Gamma).
	\end{equation}
	Besides, for any Borel set $E\subset\Gamma$,
	\begin{equation}\label{eq:hmreg}
		\omega^X(E) = \sup \{\omega^X(K): E \supset K, K \text{ is compact } \} = \inf \{ \omega^X(V): E\subset V, V \text{ is open }\}.
	\end{equation}
	Moreover, $\omega^X(\Gamma) = 1$.
\end{lemma}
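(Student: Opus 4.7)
The natural strategy is Riesz--Markov representation. For fixed $X\in\Omega$, define $\Lambda_X: C_0^0(\Gamma)\to\RR$ by $\Lambda_X(f) := Uf(X)$, where $U$ is the solution operator of Lemma \ref{lm:dfsol}. Linearity of $\Lambda_X$ is inherited from the linearity of $U$. The sandwich $\inf_\Gamma f \leq Uf(X) \leq \sup_\Gamma f$ furnished by Lemma \ref{lm:dfsol}(ii) (a disguised maximum principle) makes $\Lambda_X$ a bounded positive linear functional with $|\Lambda_X(f)| \leq \|f\|_{L^\infty(\Gamma)}$, and it also forces the monotonicity $f \leq g \Rightarrow Uf(X) \leq Ug(X)$ via linearity.

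Since $\Gamma$ is a closed subset of $\RR^n$, hence a locally compact Hausdorff space, Riesz--Markov then produces a unique positive regular Borel measure $\omega^X$ on $\Gamma$ such that
\[
\Lambda_X(f) = \int_\Gamma f\, d\omega^X \quad \text{for every } f\in C_0^0(\Gamma),
\]
which is precisely \eqref{eq:dfhm}. Both the inner/outer regularity assertions \eqref{eq:hmreg} and the uniqueness of $\omega^X$ are standard parts of the conclusion of that theorem, so no extra work is required for them.

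It remains to verify the normalization $\omega^X(\Gamma) = 1$. I would exhaust $\Gamma$ by fixing $q_0\in\Gamma$ and choosing cutoffs $f_n\in C_0^0(\Gamma)$ with $0\leq f_n\leq 1$, $f_n\equiv 1$ on $\Delta(q_0,n)$, so that $f_n\uparrow 1$ pointwise on $\Gamma$. Monotone convergence yields $\int_\Gamma f_n\,d\omega^X \nearrow \omega^X(\Gamma)$, while Lemma \ref{lm:dfsol}(ii) gives $\int f_n\,d\omega^X = Uf_n(X) \leq 1$, so $\omega^X(\Gamma)\leq 1$. The main technical obstacle is the reverse inequality $\lim_n Uf_n(X) = 1$. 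The monotone, uniformly bounded sequence $\{Uf_n\}$ converges locally (via the interior Caccioppoli bound of Lemma \ref{lm:intCcpl} and the Harnack inequality of Lemma \ref{lm:Hnk}) to a bounded solution $0\leq u\leq 1$ of $Lu=0$ on $\Omega$; to identify $u\equiv 1$, I would exhaust $\Omega$ by bounded subdomains $\Omega_R := \Omega\cap B(q_0,R)$, apply the boundary H\"older regularity (Lemma \ref{lm:bdHolder}) to pin $u$ to its trace value $1$ continuously along $\Gamma\cap B(q_0,R/2)$, and then control the remaining deviation $1-u$ in the bulk by a quantitative maximum principle that degenerates to zero as $R\to\infty$.
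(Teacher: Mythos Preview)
The paper does not actually prove this lemma; it is quoted verbatim from \cite{elliptic} (Lemmas 9.30 and 9.33) as a preliminary result. Your Riesz--Markov approach is exactly the standard one and is what \cite{elliptic} does: the positivity and boundedness of $f\mapsto Uf(X)$ from Lemma~\ref{lm:dfsol}(ii) feed directly into the representation theorem on the locally compact Hausdorff space $\Gamma$, and regularity \eqref{eq:hmreg} then follows because $\Gamma$ is $\sigma$-compact.

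The only place your sketch loses precision is the normalization $\omega^X(\Gamma)\geq 1$. Passing to the limit $u=\lim_n Uf_n$ first and then invoking an unspecified ``quantitative maximum principle that degenerates as $R\to\infty$'' is circular: bounding $1-u$ on $\Omega\cap B(q_0,R)$ by its values on $\partial B(q_0,R)\cap\Omega$ gives nothing better than $1$, and showing that the outer boundary contributes negligibly is essentially the statement $\omega^X(\Gamma)=1$ itself. The clean route is to stay at finite $n$ and apply the boundary H\"older estimate directly to $v_n:=1-Uf_n$. Choose the cutoffs $f_n\in C_0^0(\Gamma)\cap H$ (dense by Remark~\ref{rmk:dfsol}), so that $Uf_n\in W$ by Lemma~\ref{lm:dfsol}(v) and hence $v_n\in W_r(B(q_0,n))$ with $Tv_n=0$ on $\Delta(q_0,n)$. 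Lemma~\ref{lm:bdHolder} and $0\leq v_n\leq 1$ then give, for fixed $X$ and $n$ large,
\[
1-Uf_n(X)=v_n(X)\leq \osc_{B(q_0,|X-q_0|)} v_n \leq C\Big(\frac{|X-q_0|}{n}\Big)^\beta \to 0,
\]
so $Uf_n(X)\to 1$ and $\omega^X(\Gamma)=1$ follows by monotone convergence.
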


\begin{lemma}[Lemma 9.38 of \cite{elliptic}]\label{lm:hmassol}
	Let $E\subset \Gamma$ be a Borel set and define the function $u_E$ on $\Omega$ by $u_E(X) = \omega^X(E)$. Then
	\begin{enumerate}[(i)]
		\item if there exists $X\in\Omega$ such that $u_E(X) = 0$, then $u_E \equiv 0$;
		\item the function $u_E$ lies in $W_r(\Omega)$ and is a solution in $\Omega$;
		\item if $B\subset \RR^n$ is a ball such that $E\cap B = \emptyset$, then $u_E \in W_r(B)$ and $Tu_E = 0$ on $B\cap\Gamma$.
	\end{enumerate}
\end{lemma}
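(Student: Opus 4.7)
The plan is to establish the three parts in the order (ii), (iii), (i), approximating $\chi_E$ by continuous compactly supported boundary data $f_n \in C_0^0(\Gamma)$ whose associated solutions $Uf_n$ are already understood through Lemma~\ref{lm:dfsol}, and transferring properties to $u_E$ via Caccioppoli-based weak-limit arguments. For (ii), first reduce to $E$ bounded by noting $u_{E\cap B(0,R)}\nearrow u_E$. The core case is $V\subset\Gamma$ open and bounded: pick $g_n\in C_0^0(\Gamma)$ with $0\le g_n\nearrow\chi_V$ pointwise via Urysohn applied to a compact exhaustion of $V$. Each $Ug_n$ is a non-negative solution in $W_r(\Omega)$ bounded by $1$, and $Ug_n(X)=\int g_n\,d\omega^X\nearrow \omega^X(V)=u_V(X)$ for every $X$ by monotone convergence. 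The interior Caccioppoli inequality (Lemma~\ref{lm:intCcpl}) on any ball $B$ with $2B\subset\Omega$ yields the uniform bound $\iint_B|\nabla Ug_n|^2\,dm\le Cr^{-2}m(2B)$, so $\{\nabla Ug_n\}$ is bounded in $L^2_{\mathrm{loc}}(\Omega,dm)$; extracting a weakly convergent subsequence, using pointwise/dominated convergence of $Ug_n\to u_V$, and identifying the weak limit of gradients as $\nabla u_V$ places $u_V\in W_r(\Omega)$. Passing to the limit in $\iint A\nabla Ug_n\cdot\nabla\varphi\,dX=0$ for $\varphi\in C_0^\infty(\Omega)$ then shows $u_V$ is a solution. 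To extend from open sets to all Borel $E$, I would invoke a Dynkin $\pi$-$\lambda$ argument: the family $\mathcal{M}$ of Borel sets $E$ for which $u_E\in W_r(\Omega)$ is a solution contains the $\pi$-system of open sets (just established), is closed under complements (since $u_{\Gamma\setminus E}=1-u_E$), and is closed under countable disjoint unions (partial sums $\sum_{n\le N}u_{E_n}$ are solutions and converge monotonically to $u_{\cup E_n}$, and the same Caccioppoli/weak-limit argument applies); hence $\mathcal{M}$ exhausts all Borel sets.

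For (iii), assume $B$ is centered on $\Gamma$ with $E\cap B=\emptyset$ (the case $B\subset\Omega$ is immediate from (ii)). Construct the $g_n$ as in (ii) but additionally supported in $\Gamma\setminus\tfrac{3}{4}B$, which is possible since $E\subset\Gamma\setminus B$; then Lemma~\ref{lm:dfsol}(iv) gives $Ug_n\in W_r(\tfrac{3}{4}B)$. The boundary Caccioppoli inequality yields a uniform bound on $\iint_{\frac12 B}|\nabla Ug_n|^2\,dm$, and the same weak-limit argument places $u_E\in W_r(\tfrac12 B)$. Since each $Ug_n$ is a non-negative bounded solution in $W_r(\tfrac{3}{4}B)$ with trace zero on $\tfrac{3}{4}B\cap\Gamma$, Lemma~\ref{lm:bdHolder} delivers a uniform pointwise estimate $Ug_n(X)\le C(|X-q|/r)^\beta$ for every $q\in \tfrac12 B\cap\Gamma$ and every $X$ close to $q$; passing to the limit in $n$ gives $u_E(X)\le C(|X-q|/r)^\beta\to 0$ as $X\to q$, so $Tu_E=0$ on $\tfrac12 B\cap\Gamma$. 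Covering $B\cap\Gamma$ by such half-balls centered at each of its points yields the conclusion on all of $B\cap\Gamma$.

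Finally, (i) follows from Harnack once (ii) is in hand: $u_E$ is a non-negative solution on $\Omega$, and $\Omega=\RR^n\setminus\Gamma$ is connected since $d$-Ahlfors regularity with $d<n-1$ forces $\mathcal{H}^{n-1}(\Gamma)=0$ and $\Gamma$ has topological dimension $<n-1$. If $u_E(X_0)=0$, then for any $X\in\Omega$ pick a compact connected $K\subset\Omega$ containing $X_0$ and $X$; Lemma~\ref{lm:Hnk}(2) gives $u_E(X)\le \sup_K u_E\le C_K\inf_K u_E\le C_K u_E(X_0)=0$. The main obstacle in the whole argument is the approximation in (ii): one cannot transfer regularity of $\omega^X$ at a fixed pole to all poles in a way that produces uniform pointwise convergence of $Uf_n\to u_E$ (such a transfer would be essentially equivalent to (i) and hence circular). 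The Dynkin reduction bypasses this by restricting attention to monotone/complementary operations on measures that automatically respect pointwise convergence at every pole, while the weighted Caccioppoli inequality supplies the PDE-side compactness needed to pass to the weak limit without losing the equation.
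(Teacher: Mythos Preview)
The paper does not prove this lemma; it is quoted from \cite{elliptic} (Lemma~9.38 there) as a preliminary result, so there is no in-paper argument to compare against. Your method is, however, close in spirit to the proof the paper \emph{does} give for the closely related Lemma~\ref{lm:fchiE}.

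Your argument for (ii) via the Dynkin $\pi$--$\lambda$ theorem and for (i) via Harnack is correct. There is a gap in (iii). The phrase ``construct the $g_n$ as in (ii) but additionally supported in $\Gamma\setminus\tfrac{3}{4}B$'' only makes sense when $E$ is open: in (ii) the $g_n$ were produced only for open $V$, and for general Borel $E$ you invoked Dynkin, not an approximating sequence. A Dynkin argument for (iii) is awkward because complementation within $\Gamma\setminus B$ forces you to first handle the \emph{closed} set $\Gamma\setminus B$ itself.

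The clean fix uses (ii), which you have already established: once $u_F$ is a non-negative solution for every Borel $F$, Harnack (Lemma~\ref{lm:Hnk}) is available and the pole-dependence obstacle you correctly flagged disappears. Given Borel $E$ with $E\cap B=\emptyset$, take a smaller concentric ball $B'$ with $\overline{B'}\subset B$, and use outer regularity of $\omega^{X_0}$ at a single pole (see \eqref{eq:hmreg}) to find open sets $V_n$ with $E\subset V_n\subset\Gamma\setminus\overline{B'}$ and $\omega^{X_0}(V_n\setminus E)\to 0$. Then $0\le u_{V_n}-u_E=u_{V_n\setminus E}$ is a non-negative solution by (ii), so Harnack on compacta gives $u_{V_n}\to u_E$ locally uniformly in $\Omega$. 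Your open-case argument for (iii) shows each $u_{V_n}\in W_r(B')$ with $Tu_{V_n}=0$ on $B'\cap\Gamma$, and the boundary Caccioppoli bounds are uniform in $n$ since $0\le u_{V_n}\le 1$; your weak-limit passage then transfers membership in $W_r(B')$ and vanishing trace to $u_E$. Since $B'\subset\subset B$ was arbitrary, this yields $u_E\in W_r(B)$ and $Tu_E=0$ on $B\cap\Gamma$. This outer-approximation-plus-Harnack mechanism is exactly what the paper uses in its proof of Lemma~\ref{lm:fchiE}.
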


For now we are only able to write down the solution to \eqref{ellp} if the boundary function $f\in C_0^0(\Gamma)$, see Lemma \ref{lm:dfsol}. With the help of the harmonic measure, we prove the following lemma:
\begin{lemma}\label{lm:fchiE}
	For any function $f\in C_0^0 (\Gamma)$ and any Borel set $E\subset \Gamma$, the function 
	\begin{equation}\label{def:solbyitg}
		u(X): = \int_{E} f d\omega^X
	\end{equation} 
	defined on $\Omega$ satisfies the following:
	\begin{enumerate}
		\item it is continuous in $\Omega$;
		\item it is a solution of $Lu=0$ in $\Omega$ and lies in $W_r(\Omega)$;
		\item if $B\subset \RR^n$ is an open ball such that $E\cap B=\emptyset$, then $u$ is continuous in $B\cap\Omega$, $u$ can be continuously extended to zero on $B\cap\Gamma$, and that $u\in W_r(B)$. 
	\end{enumerate}
\end{lemma}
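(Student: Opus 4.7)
The plan is to uniformly approximate $f$ by simple functions and reduce to Lemma \ref{lm:hmassol}, which already handles integrands of the form $\chi_E$. For each $n\ge 1$, I would partition the range of $f$ into intervals $I_k=[t_k,t_{k+1})$ of length $1/n$, set $A_k=f^{-1}(I_k)\subset\Gamma$, and let $f_n=\sum_k t_k \chi_{A_k}$, so that $\|f-f_n\|_\infty\le 1/n$. Since $\omega^X$ is a probability measure on $\Gamma$, the functions
\[ u_n(X):=\int_E f_n\,d\omega^X = \sum_k t_k\,\omega^X(E\cap A_k) \]
converge to $u$ uniformly on $\Omega$. By Lemma \ref{lm:hmassol}(ii) each $\omega^X(E\cap A_k)$ is a solution of $Lu=0$ lying in $W_r(\Omega)$, hence so is the finite linear combination $u_n$; interior regularity of such solutions makes each $u_n$ continuous in $\Omega$, and assertion (1) follows at once from the uniform convergence.

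For (2), the differences $v_{n,m}:=u_n-u_m$ are solutions in $\Omega$, so I would apply the interior Caccioppoli inequality (Lemma \ref{lm:intCcpl}) on any ball $B$ with $2B\subset\Omega$, obtaining
\[ \iint_B |\nabla v_{n,m}|^2\,dm \le Cr^{-2}\iint_{2B} v_{n,m}^2\,dm \le Cr^{-2}\|u_n-u_m\|_\infty^2\,m(2B) \to 0. \]
Thus $\{\nabla u_n\}$ is Cauchy in $L^2_{\mathrm{loc}}(\Omega,dm)$, and its limit must coincide with the distributional gradient of $u$, so $u\in W_r(\Omega)$. The ellipticity bound $|A\nabla u_n\cdot\nabla\varphi|\le C_1|\nabla u_n||\nabla\varphi|\,w$ for $\varphi\in C_0^\infty(\Omega)$, combined with the Cauchy--Schwarz inequality against $dm$, then permits passing to the limit in $\iint_\Omega A\nabla u_n\cdot\nabla\varphi\,dX=0$, which shows $u$ is a weak solution in $\Omega$.

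For (3), suppose $E\cap B=\emptyset$. Then $(E\cap A_k)\cap B=\emptyset$ for every $k$, so Lemma \ref{lm:hmassol}(iii) yields $\omega^X(E\cap A_k)\in W_r(B)$ with vanishing trace on $B\cap\Gamma$, whence $u_n\in W_r(B)$ and $Tu_n=0$ on $B\cap\Gamma$. Each $v_{n,m}$ is then a solution in $B$ with vanishing trace, and $|v_{n,m}|$ is a non-negative subsolution with the same vanishing trace (using $|\nabla|v||=|\nabla v|$ a.e.); applying the boundary Caccioppoli estimate \eqref{eq:bdCcpl} to $|v_{n,m}|$ on any concentric sub-ball $B'$ with $2B'\subset B$ again produces Cauchyness of $\nabla u_n$ in $L^2(B',dm)$, so $u\in W_r(B)$ with $Tu=0$ on $B\cap\Gamma$. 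Finally, from the bound $|u(X)|\le \|f\|_\infty\,\omega^X(E)$ together with the fact that $X\mapsto\omega^X(E)$ is a solution in $B$ with vanishing trace on $B\cap\Gamma$ (Lemma \ref{lm:hmassol}(iii)), the boundary H\"older estimate (Lemma \ref{lm:bdHolder}) forces $u(X)\to 0$ as $X\to q\in B\cap\Gamma$, giving the desired continuous extension by zero.

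The only mildly delicate point is the boundary Caccioppoli passage in (3): \eqref{eq:bdCcpl} is stated for non-negative subsolutions, so I must invoke it for $|v_{n,m}|$ rather than for the sign-changing $v_{n,m}$ itself. Apart from that, the whole argument is a routine combination of uniform approximation with the regularity theory for harmonic measure already developed in \cite{elliptic}.
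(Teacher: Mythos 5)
Your proof is correct, but it goes through a different approximation than the paper's. The paper fixes $f$ and approximates the \emph{set} $E$: it exhausts an open $E$ by compacts $K_j$, takes Urysohn functions $\chi_{K_j}\le g_j\le\chi_E$, and works with the continuous solutions $u_j=U(fg_j)$ supplied by Lemma \ref{lm:dfsol} (a general Borel $E$ then needs a second, outer approximation by open sets). You instead fix $E$ and approximate the \emph{function} $f$ by simple functions built from its level sets, reducing everything to Lemma \ref{lm:hmassol}, which already covers arbitrary Borel sets; this removes the open/compact two-step entirely and, because $\|f-f_n\|_\infty\le 1/n$ and $\omega^X(\Gamma)=1$, gives convergence that is uniform on all of $\Omega$ rather than only on compacta (so no Harnack step is needed), and strong rather than merely weak $L^2_{loc}$ convergence of the gradients. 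What the paper's route buys is that its approximants $U(fg_j)$ are continuous on all of $\RR^n$ by construction, so both the interior continuity and the vanishing at $B\cap\Gamma$ come essentially for free; your route instead leans on the interior regularity theory of \cite{elliptic} to know that each $X\mapsto\omega^X(E\cap A_k)$ is continuous in $\Omega$ --- true, but worth citing explicitly, since Lemma \ref{lm:hmassol} only asserts that these functions are solutions lying in $W_r(\Omega)$. Two small repairs: the Caccioppoli inequalities \eqref{eq:intCcplwithtf} and \eqref{eq:bdCcpl} are stated for non-negative (sub)solutions, so the device you invoke at the boundary --- applying them to $|v_{n,m}|$, a non-negative subsolution with $|\nabla|v_{n,m}||=|\nabla v_{n,m}|$ a.e. --- is needed in the interior step as well; and \eqref{eq:bdCcpl} requires balls centered on $\Gamma$, so near $B\cap\Gamma$ you should cover a given compact subset of $B$ by small balls centered on $\Gamma\cap B$ together with interior balls, rather than invoking concentric sub-balls of $B$.
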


\begin{remark}
	We note the following:
	\begin{itemize}
		\item Compared with Lemma \ref{lm:dfhm} and Lemma \ref{lm:dfsol}, this lemma says that $f\chi_E$ integrated against the harmonic measure gives rise to a continuous solution, for any Borel set $E\subset \Gamma$.
		\item If the Borel set $E$ is bounded, then the same properties hold for any bounded continuous function $f\in C_b(\Gamma)$.
	\end{itemize}
\end{remark}
\begin{proof}
	Since the definition \eqref{def:solbyitg} is a linear integration, we may assume without loss of generality that $f$ is non-negative. Otherwise we just write $f=f_+ - f_-$, with $f_{\pm} \in C(\RR^n)$. We first assume that $E$ is an open set, and that $\omega^{X}(E)>0$ for some $X \in\Omega$. By Lemma \ref{lm:hmassol} (i) it follows that $\omega^{X}(E)>0$ for all $X\in\Omega$. Fix an arbitrary $X_0\in\Omega$. Let $K_j$ be an increasing sequence of compact sets in $E$, such that $\omega^{X_0}(E\setminus K_j) < 1/j$. By Urysohn's lemma we can construct $g_j \in C_0^0(\Gamma)$ such that $\chi_{K_j} \leq g_j \leq \chi_{E}$, and without loss of generality we can choose the sequence $g_j$ to be increasing. Note that $fg_j \in C_0^0(\Gamma)$, and hence by Lemma \ref{lm:dfsol} we may define $u_j = U(fg_j) \in C^0(\Gamma)$. 
	Then
	\[ 0 \leq u(X) - u_j(X) = \int f\left( \chi_E - g_j \right) d\omega^X \leq \omega^X(E\setminus K_j) \|f\|_{L^\infty}. \]
	By Lemmas \ref{lm:hmassol} and \ref{lm:Hnk}, for any compact subset $K$ in $\Omega$ containing $X_0$, we have 
	\[ \omega^X(E\setminus K_j) \leq C_K \omega^{X_0}(E\setminus K_j) \]
	holds for every $X\in K$. Here the constant $C_K$ only depends on $n, d, C_1, \dist(K,\Gamma)$ and $\diam K$, and in particular it is independent of $j$. Therefore
	\[ 0 \leq u(X) - u_j(X) \leq \frac{C_K \|f\|_{L^\infty}}{j}, \]
	namely $\{u_j\}$ converges uniformly on compact sets of $\Omega$ to $u$, and thus $u$ is continuous on $\Omega$.
	
	Let $\phi\in C_0^\infty(\Omega)$ be arbitrary, we claim that $\{u_j\}$ has a subsequence, which we relabel, such that
	\begin{equation}\label{eq:clmgradintg}
		\nabla (\phi u_j) \rightharpoonup \nabla (\phi u) \text{ in } L^2(\Omega, w).
	\end{equation}
	In particular $\nabla(\phi u) \in L^2(\Omega, w)$ for all $\phi \in C_0^\infty(\Omega)$, and thus $u\in W_r(\Omega)$.
	Indeed, by the interior Caccioppoli inequality \eqref{eq:intCcplwithtf}, we have
	\begin{equation}\label{eq:gradintg}
		\iint_{\Omega} |\nabla(\phi u_j)|^2 dm \leq 2 \iint_\Omega \left( |\nabla \phi|^2 u_j^2 + \phi^2 |\nabla u_j|^2 \right) dm \leq C \iint_\Omega |\nabla \phi|^2 u_j^2 dm. 
	\end{equation} 
	Recall that $u_j \to u$ uniformly on the compact set $\supp\phi$, the right hand side of \eqref{eq:gradintg} converges to $C\iint_\Omega |\nabla \phi|^2 u^2 dm$. As a consequence the left hand side of \eqref{eq:gradintg} is uniformly bounded in $j$. Therefore there is a subsequence (which we relabel) such that $\nabla (\phi u_j) $ converges weakly in $L^2(\Omega,w)$ to some function $v$. By the uniqueness of limit in the distributional sense, we conclude that $v=\nabla(\phi u)$, which finishes the proof of the claim \eqref{eq:clmgradintg}.
	
	Recall each $u_j$ is a solution of $L$ in $\Omega$. Let $\varphi \in C_0^\infty(\Omega)$ be an arbitrary test function. We choose $\phi \in C_0^\infty(\Omega)$ such that $\phi \equiv 1$ on $\supp\varphi$. In particular $\nabla(\phi u) = \nabla u$, $\nabla(\phi u_j) = \nabla u_j$ on $\supp\varphi$. Thus
	\begin{align}
		\iint_\Omega A \nabla u \cdot \nabla \varphi dX = \iint_\Omega \mathcal{A} \nabla u \cdot \nabla \varphi dm & = \iint_{\Omega} \mathcal{A} \nabla (\phi u) \cdot \nabla \varphi dm \nonumber \\
		& = \lim_{j\to\infty} \iint_{\Omega} \mathcal{A} \nabla (\phi u_j ) \cdot \nabla \varphi dm \nonumber \\
		& = \lim_{j\to\infty} \iint_\Omega \mathcal{A} \nabla u_j \cdot \nabla \varphi dm  = \lim_{j\to\infty} \iint_\Omega A \nabla u_j \cdot \nabla \varphi dX = 0.
	\end{align}
	
	If $E$ is not an open set, the proof is similar, and we just need to approximate $E$ from above by open sets. We omit the details here.
	
	\smallskip
	
	Going further, if $B\subset \RR^n$ is an open ball such that $E\cap B = \emptyset$, we first prove that $u$ can be continuously extended to zero on $\Gamma \cap B$. Take an arbitrary $q\in \Gamma \cap B$. Choose $r>0$ sufficiently small so that $B(q,2r) \subset B$. Consider a function $g \in C_0^\infty(\RR^n)$ satisfying $\chi_{B(q,r)} \leq g \leq \chi_{B(q,2r)}$. If $f\in C_0^0(\Gamma)$, then $f(1-g) \in C_0^0(\Gamma)$. If the Borel set $E$ is bounded and $f$ is only assumed to be bounded continuous, we let $\varphi \in C_0^\infty(\RR^n)$ be a function such that $\varphi \equiv 1$ on a compact set containing $E$ and $B(q,2r)$. Then $f(1-g)\varphi \in C_0^0(\Gamma)$.
	 Let 
	\[ \widetilde u(X) := U(f(1-g)\varphi ) = \int_{\pO} f(1-g)\varphi d\omega^X. \]
	(For simplicity we take $\varphi \equiv 1$ for case when $f\in C_0^0(\Gamma)$.)
	By the positivity of the harmonic measure and the fact that $E\subset \pO \setminus B(q,2r)$, we deduce that $0\leq u(X) \leq \widetilde u(X)$ for all $X\in\Omega$. Recall by Lemma \ref{lm:dfsol} that $\widetilde u\in C(\RR^n)$, and as $X\to q'\in B(q,r) \cap \Gamma$, the function $\widetilde u(X) \to f(1-g)\varphi(q')=0$. By the squeeze theorem $u$ can be continuously extended to zero on $B(q,r)\cap \Gamma$, and the resulting function, still denoted as $u$, is continuous in $B(q,r)$.
	
	Now we show that $u\in W_r(B)$. To this end, let $\phi \in C_0^\infty(B)$, it suffices to show that $\nabla\left( \phi u \right) \in L^2(B,w)$. From Lemma \ref{lm:dfsol} (iv), Remark \ref{rmk:dfsol} and the boundary Caccioppoli inequality \eqref{eq:bdCcplintg}, we have
	\begin{equation}\label{eq:gradintgbd}
		\iint_B |\nabla (\phi u_j)|^2 dM \leq 2 \iint_{B} \left( |\nabla \phi|^2 u_j^2 + \phi^2 | \nabla u_j|^2 \right) dm \leq C \iint_{B} |\nabla \phi|^2 u_j^2 dm. 
	\end{equation} 
	Recall that $u_j \to u$ pointwise on $B\setminus \Gamma$. Since $u$ is continuous on $B$, $u\in L^2(\supp\phi, w)$. Hence by the dominated convergence theorem the right hand side of \eqref{eq:gradintgbd} converges to $C\iint_{B} |\nabla \phi|^2 u^2 dm$. As a consequence the left hand side is uniformly bounded, and thus passing to a subsequence $\nabla(\phi u_j)$ converges weakly in $L^2(B,w)$ to some function $v$. By the uniqueness of the limit we deduce $v= \nabla(\phi u)$. In particular this implies $\nabla(\phi u)\in L^2(B,w)$.
\end{proof}

As a summary, we can write down the solution of $L$ using the harmonic measure, for the following classes of boundary data: continuous and compactly supported functions $f\in C_0^0(\Gamma)$ (see Lemma \ref{lm:dfsol}), characteristic functions $\chi_E$ for Borel sets $E\subset \Gamma$ (see Lemma \ref{lm:hmassol}), their products $f\chi_E$ (see the above Lemma \ref{lm:fchiE}), or a linear combination of the above. For the third case, if the Borel set $E$ is bounded, we only need to assume $f\in C_b(\Gamma)$.

\begin{lemma}[corkscrew point, Lemma 11.46 of \cite{elliptic}]\label{lm:cksc}
	There exists $M > 1$ such that for any $q\in\Gamma$ and $r>0$, there exists a point $A = A_r(q)\in\Omega$ such that
	\begin{equation}
		|A-q| < r, \quad \delta(A) \geq \frac{r}{M}.
	\end{equation} 
	This point will be referred to as a corkscrew point hereafter.
\end{lemma}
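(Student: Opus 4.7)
The plan is to argue by a simple volume/covering contradiction, the critical input being that $\Gamma$ has codimension at least two in $\RR^n$ (since $d<n-1$ forces $n-d\geq 2$). The intuition is that a $d$-set with $d\leq n-2$ is too thin to fill up the $(r/M)$-neighborhood of itself inside $B(q,r)$ once $M$ is large, so there must be room for a point far from $\Gamma$.

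Suppose for contradiction that no such $A$ exists; then every $X\in B(q,r/2)$ satisfies $\delta(X)<r/M$. For each such $X$ pick $y_X\in\Gamma$ with $|X-y_X|<r/M$; by the triangle inequality $y_X\in \Gamma\cap B(q, r/2 + r/M)$, so
\begin{equation*}
  B(q,r/2)\ \subset\ \bigcup_{y\in \Gamma\cap B(q,\,r/2+r/M)} B(y,\, r/M).
\end{equation*}
Apply the Vitali covering lemma to extract a countable disjoint subcollection $\{B(y_i, r/M)\}_{i=1}^N$ (with $y_i\in \Gamma\cap B(q, r/2 + r/M)$) whose fivefold dilates still cover the union, so in particular $B(q,r/2)\subset \bigcup_{i=1}^N B(y_i, 5r/M)$.

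Now feed in Ahlfors regularity in two ways. First, the balls $B(y_i, r/M)$ are disjoint and all contained in $B(q, r/2+2r/M)$, so
\begin{equation*}
  N\,C_0^{-1}(r/M)^d\ \leq\ \sum_{i=1}^{N}\sigma\bigl(\Gamma\cap B(y_i,r/M)\bigr)\ \leq\ \sigma\bigl(\Gamma\cap B(q,\,r/2+2r/M)\bigr)\ \leq\ C_0\, r^d,
\end{equation*}
which gives $N\leq C_0^2\, M^d$. Second, the covering by $5r/M$-balls yields
\begin{equation*}
  |B(q,r/2)|\ \leq\ \sum_{i=1}^N \bigl|B(y_i, 5r/M)\bigr|\ \leq\ C(n)\, N\,(r/M)^n\ \leq\ C(n,d,C_0)\, r^n\, M^{-(n-d)}.
\end{equation*}
Comparing with $|B(q,r/2)|=c_n (r/2)^n$ forces $M^{\,n-d}\leq C(n,d,C_0)$, which fails as soon as $M$ is chosen sufficiently large depending only on $n$, $d$ and $C_0$, since $n-d\geq 2>0$. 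This contradiction delivers the desired corkscrew point.

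There is no real obstacle here: the only place geometry enters is through the two uses of Ahlfors regularity (to count balls and to bound the surface measure), and the whole argument hinges on the strict inequality $d<n-1$, exactly the hypothesis that distinguishes the lower-dimensional setting and makes the $M^{-(n-d)}$ factor decay. Note that in codimension one this volume argument degenerates, consistent with the fact that corkscrew points then become a nontrivial geometric assumption on $\Omega$.
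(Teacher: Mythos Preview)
Your argument is correct. The paper does not supply its own proof of this lemma; it is quoted verbatim as Lemma~11.46 of \cite{elliptic} and used as a black box. Your volume/Vitali argument is the standard porosity-type proof and is entirely self-contained: the disjoint balls are counted via the lower Ahlfors bound on $\sigma$, the covering gives an upper bound on Lebesgue volume, and the comparison forces $M^{n-d}\leq C(n,d,C_0)$.

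One small inaccuracy in your commentary (not in the proof itself): the volume argument does \emph{not} degenerate in codimension one. With $d=n-1$ you still have $n-d=1>0$, and the same contradiction goes through; you only need $d<n$, not $d<n-1$. The reason corkscrew points become a genuine hypothesis when $d=n-1$ is topological: $\Omega=\RR^n\setminus\Gamma$ may then disconnect, and one needs corkscrew points on the correct side of $\Gamma$ (interior versus exterior), which pure volume cannot detect. In the present higher-codimension setting $\Omega$ is automatically connected, so this issue disappears and your argument gives exactly what is needed.
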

\begin{remark}
	Note that neither Lemma \ref{lm:Hcc} nor Lemma \ref{lm:cksc} is automatically true if $d=n-1$. In fact in the case of co-dimension $1$, people often work with domains that satisfy Harnack chain condition and the existence of corkscrew point at all scales, called uniform domains or $1$-sided NTA domains in the literature.
\end{remark}

\begin{lemma}[boundary Harnack inequality, Lemma 11.50 of \cite{elliptic}]\label{lm:bdHnk}
	Let $q\in\Gamma$ and $r>0$ be given, and let $A= A_r(q)$ be a corkscrew point as in Lemma \ref{lm:cksc}. Let $u\in W_r(B(q,2r))$ be a non-negative, non identically zero solution of $Lu = 0$ in $B(q,2r)\cap\Omega$, such that $Tu \equiv 0$ on $\Delta(q,2r)$. Then
	\begin{equation}\label{eq:bdHnk}
		u(X) \leq Cu(A) \quad \text{ for all } X\in B(q,r).
	\end{equation} 
\end{lemma}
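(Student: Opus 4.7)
The plan is to combine an iterated interior Harnack inequality (yielding a polynomial Carleson-type bound) with the boundary H\"older regularity (yielding decay of $u$ near $\Gamma$), and then balance the two via an elementary min-inequality. Normalize so that $u(A) = 1$; we aim to show $\sup_{B(q,r)\cap\Omega} u \le C$, with $C$ depending only on the allowable parameters.

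First I would establish a polynomial Carleson-type estimate $u(X) \le C(r/\delta(X))^{\alpha} u(A)$ for every $X \in B(q,r)$, for some $\alpha > 0$ depending only on the allowable parameters. The idea is to connect $X$ to $A$ via a sequence of corkscrew points at doubling scales. Let $q_X \in \Gamma$ satisfy $|X-q_X| = \delta(X)$, and for $j \ge 1$ define $X_j := A_{2^j\delta(X)}(q_X)$, the corkscrew given by Lemma~\ref{lm:cksc}. Consecutive points $X_{j-1}, X_j$ satisfy $\delta(X_j) \sim 2^j\delta(X)$ and $|X_{j-1}-X_j| \lesssim 2^j\delta(X)$, so Lemma~\ref{lm:Hcc} provides a Harnack chain of \emph{uniformly bounded} length, and iterating Lemma~\ref{lm:Hnk} gives $u(X_j) \ge u(X_{j-1})/K_0$ for a fixed $K_0$. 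After $k \sim \log_2(r/\delta(X))$ iterations, $\delta(X_k) \sim r$, and a final bounded Harnack chain to $A$ yields $u(A) \gtrsim K_0^{-k} u(X)$, i.e.\ $u(X) \le C(r/\delta(X))^{\alpha} u(A)$ with $\alpha = \log_2 K_0$.

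Second, for $X \in B(q,r)$ with $\delta(X) < r/4$, the closest boundary point $q_X$ satisfies $|q_X - q| \le r + \delta(X) < 3r/2$, so $B(q_X, r/2) \subset B(q, 2r)$ and Lemma~\ref{lm:bdHolder} applies, giving
\begin{equation*}
u(X) \le \osc_{B(q_X,\delta(X))} u \le C\left(\frac{\delta(X)}{r}\right)^{\beta} \left(\fint_{B(q_X,r/2)} |u|^2\, dm\right)^{1/2} \le C\left(\frac{\delta(X)}{r}\right)^{\beta} M,
\end{equation*}
where $M := \sup_{B(q,2r)\cap\Omega} u$. Applying the elementary bound $\min(a,b) \le a^{\beta/(\alpha+\beta)} b^{\alpha/(\alpha+\beta)}$ to the two estimates, the $\delta(X)$-dependence cancels and I obtain $u(X) \le C\, u(A)^{1-\theta} M^{\theta}$ for $X \in B(q,r)$, with $\theta := \alpha/(\alpha+\beta) < 1$; together with the trivial bound $u(X) \le Cu(A)$ on the set $\{\delta(X) \ge r/4\}$ (via Step~1), this yields $\sup_{B(q,r)} u \le Cu(A) + Cu(A)^{1-\theta} M^{\theta}$.

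The main remaining obstacle is closing the argument, that is, bounding $M$ itself by a constant multiple of $u(A)$. For this I would first invoke the boundary Moser estimate (Lemma~\ref{lm:Moser}) on boundary balls contained in $B(q, 2r)$ to secure an a priori finite bound on $\sup_{B(q, 2r-\varepsilon)} u$ for each $\varepsilon > 0$, and then set up a recursion on $\Phi(R) := \sup_{B(q,R)\cap\Omega} u$ of the form $\Phi(R_1) \le C_1 u(A) + C_2 u(A)^{1-\theta}\Phi(R_2)^{\theta}$ for suitable $r \le R_1 < R_2 \le 2r$. The sub-linearity ($\theta < 1$) of the recursion forces $\Phi$ to saturate at a value bounded by a constant multiple of $u(A)$, giving $\Phi(r) \le Cu(A)$ after finitely many iterations. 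The key point is that, unlike in the classical codimension-one NTA setting, the Harnack chain length in Lemma~\ref{lm:Hcc} depends polynomially on the scale ratio, so uniform Harnack bounds are unavailable; it is exactly the polynomial (rather than super-exponential) growth achieved by the doubling-scale iteration of Step~1 that is then beaten by the H\"older decay of Step~2 when combined via the min-inequality.
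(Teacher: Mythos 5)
This lemma is not proved in the present paper: it is imported verbatim as Lemma 11.50 of \cite{elliptic}, so the comparison is really with the proof there (which follows the classical Carleson-estimate scheme of Caffarelli--Fabes--Mortola--Salsa and Jerison--Kenig, adapted to higher codimension). Your proposal rests on the same two competing estimates as that proof --- the polynomial growth bound $u(X)\lesssim(r/\delta(X))^{\alpha}u(A)$ obtained by stringing together Harnack chains between corkscrew points at dyadic scales, and the H\"older decay $u(X)\lesssim(\delta(X)/r)^{\beta}\sup u$ from Lemma \ref{lm:bdHolder} --- but you close the argument differently. The standard route argues by contradiction: if $u$ exceeds $CM^{k}u(A)$ somewhere in $B(q,r)$, the polynomial bound forces that point to lie within distance $\sim M^{-k/\alpha}r$ of $\Gamma$, and the oscillation estimate then produces a nearby point where $u$ exceeds $CM^{k+1}u(A)$; the resulting sequence converges to a boundary point at which $u$ blows up, contradicting local boundedness (boundary Moser) and $Tu=0$. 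Your route instead interpolates the two bounds pointwise and absorbs $\sup_{B(q,R_2)}u$ into the left-hand side via a recursion on nested balls. Both work; the contradiction argument needs no a priori control of $\sup_{B(q,2r)}u$, whereas yours buys a cleaner, non-iterative pointwise statement at the price of two extra technical inputs that you should make explicit: (i) the qualitative finiteness of $\Phi(R)=\sup_{B(q,R)\cap\Omega}u$ for $R<2r$, which as you note follows from interior and boundary Moser estimates given $u\in W_r(B(q,2r))$ and $Tu=0$; and (ii) the fact that the coefficient $C_2$ in your recursion necessarily degenerates like $(r/(R_2-R_1))^{\gamma}$, because the H\"older estimate at $X\in B(q,R_1)$ must be run on a ball $B(q_X,\rho)$ with $\rho\lesssim R_2-R_1$ to stay inside $B(q,R_2)$. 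Consequently ``finitely many iterations'' with fixed constants does not literally close the loop; you need Young's inequality to convert the sublinear term into $\epsilon\Phi(R_2)+C(r/(R_2-R_1))^{\gamma/(1-\theta)}u(A)$ and then the standard absorption lemma for monotone functions on nested radii (Giaquinta--Giusti type). With those two points, and the routine check that all Harnack chains and H\"older balls remain inside $B(q,2r)$ (guaranteed since the chains of Lemma \ref{lm:Hcc} track the straight segment between their endpoints), the argument is complete.
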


We also recall the following ``classical'' Poincar\'e inequality for Sobolev functions.

\begin{lemma}[Poincar\'e inequality, Lemma 4.13 of \cite{elliptic}]\label{lm:ballPoincare}
	Let $\Gamma$ be a $d$-Ahlfors regular set in $\RR^n$ with $d<n-1$. For any function $v\in W$, $X\in\RR^n$ and $r>0$, let $B=B(X,r)$, then
	\begin{equation}
		\left( \frac{1}{m(B)} \iint_{B} |v(Y)-v_{B}|^2 dm(Y) \right)^{\frac{1}{2}} \leq C r \left( \frac{1}{m(B)} \iint_{B} |\nabla v(Y)|^2 dm(Y) \right)^{\frac{1}{2}},
	\end{equation}
	where $v_B$ denotes the average $m(B)^{-1} \int_B v dm$.
\end{lemma}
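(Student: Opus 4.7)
The plan is to reduce the weighted Poincar\'e inequality on a ball $B=B(X,r)$ to the classical theory by verifying that $w(Y) = \delta(Y)^{d-n+1}$ is an $A_2$ Muckenhoupt weight on $\RR^n$ with constants depending only on $n$, $d$, and the Ahlfors regularity constant $C_0$. Once this is established, the result follows from the weighted Poincar\'e inequality of Fabes--Kenig--Serapioni applied with $p=2$, where by a standard mollification argument one may assume $v$ is smooth on a neighborhood of $\overline{B}$.

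To verify the $A_2$ condition on a ball $B = B(X,r)$, I would split into two cases based on the position of $B$ relative to $\Gamma$. In the \emph{interior case} $\delta(X) \geq 2r$, one has $\delta(Y) \in [\delta(X)/2, 3\delta(X)/2]$ for all $Y \in B$, so $w(Y) \simeq w(X)$ uniformly on $B$, and the product $\bigl(|B|^{-1}\int_B w\bigr)\bigl(|B|^{-1}\int_B w^{-1}\bigr)$ is bounded by a dimensional constant. In the \emph{boundary case} $\delta(X) < 2r$, choose $q\in\Gamma$ with $|X-q| = \delta(X)$, so that $B \subset B(q,3r)$. By Lemma \ref{lm:weight}(ii), $\int_B w \leq m(B(q,3r)) \lesssim r^{d+1}$; on the other hand, $\delta(Y) \leq 3r$ throughout $B$, whence $\int_B w^{-1} = \int_B \delta^{n-1-d}\, dY \lesssim r^{n-1-d}\cdot r^n$. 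Multiplying and dividing by $|B|^2 \simeq r^{2n}$ gives $\bigl(|B|^{-1}\int_B w\bigr)\bigl(|B|^{-1}\int_B w^{-1}\bigr) \lesssim r^{-2n}\cdot r^{d+1}\cdot r^{2n-1-d} = 1$, confirming the $A_2$ condition with a uniform constant.

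The main obstacle is really the boundary case, and in particular the sharp bound $\int_B w \lesssim r^{d+1}$ supplied by Lemma \ref{lm:weight}(ii), which is where the lower-dimensional Ahlfors regularity (the hypothesis $d<n-1$, combined with the $d$-regularity of $\Gamma$) enters decisively: without it, the weighted mass of $B$ could be considerably larger than $r^{d+1}$ and the Muckenhoupt condition would fail. As an alternative route that avoids invoking the Fabes--Kenig--Serapioni theorem as a black box, one could Whitney-decompose $B\setminus \Gamma$ into cubes on which $w$ is essentially constant, apply the classical (unweighted) Poincar\'e inequality on each Whitney cube, and then chain the local estimates together via a Boman-type chain argument to recover the global weighted inequality; in that approach the main technical burden shifts from $A_p$-theory to the combinatorial chaining of Whitney cubes near $\Gamma$.
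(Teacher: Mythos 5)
Your argument is correct, but note first that the paper does not actually prove this lemma: it is imported verbatim as Lemma 4.13 of \cite{elliptic}, so there is no in-paper proof to compare against, and a self-contained argument like yours is genuinely additional content. The reduction to Muckenhoupt theory is the natural route, and your $A_2$ computation checks out: in the boundary case the exponents combine as $r^{-2n}\cdot r^{d+1}\cdot r^{(n-1-d)+n}=r^{0}$, and the decisive input is exactly the upper bound $m(B(q,3r))\lesssim r^{d+1}$ from Lemma \ref{lm:weight}(ii), as you identify. Two points should be made explicit in a full write-up. First, $v\in W$ is a priori only a function on $\Omega=\RR^n\setminus\Gamma$, and a Poincar\'e inequality on a ball meeting $\Gamma$ would be false if $v$ could jump across $\Gamma$; what saves the argument is the identification, recorded in the paper right after \eqref{def:W}, of $W$ with $\{u\in L^1_{loc}(\RR^n):\nabla u\in L^2(\RR^n,dm)\}$ (this uses $d<n-1$, i.e.\ codimension at least $2$), so $v$ really is a weighted Sobolev function on all of $\RR^n$ and the Fabes--Kenig--Serapioni inequality applies. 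Second, the mollification step requires density of smooth functions in the weighted Sobolev space on $B$, which holds for $A_2$ weights but is a genuine ingredient rather than a formality; likewise, one should note that replacing the $dm$-average $v_B$ by whatever average the black-box theorem uses costs at most a factor of $2$. With those remarks supplied the proof is complete. The Whitney--Boman chaining alternative you sketch would also work and is the fallback when a weight fails to be $A_p$; here the $A_2$ route is shorter and is the one consistent with how the weight is treated throughout \cite{elliptic}.
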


Suppose $\Delta= B(q_0,r) \cap\Gamma$ is a surface ball.
For any $q\in\Delta$ and any $j\in\mathbb{N} $, let 
\begin{equation}
	\Gamma_j(q) = \Gamma(q) \cap \left(B(q,2^{-j}r)\setminus B(q,2^{-j-1}r)\right) \label{def:stripe}
\end{equation}  
be a stripe in the cone $\Gamma(q)$ at height $2^{-j}r$, and 
\begin{equation}
	\Gamma_{j\rightarrow j+m}(q) = \bigcup_{i=j}^{j+m}\Gamma_i(q) = \Gamma(q) \cap \left( B(q, 2^{-j}r) \setminus B(q, 2^{-(j+m)-1}r) \right) , \label{def:unionstripe}
\end{equation} 
be a union of $(m+1)$ stripes. With these notations we can prove a less conventional form of Poincar\'e inequality, available for solutions with vanishing boundary values.

\begin{lemma}\label{lm:Poincare}
Suppose that $u\in W_r(\Omega)$ is a non-negative solution of $L$, $Tu=0$ on $3\Delta$ and $u\in W_r(B(q_0,3r))$.
There exist an aperture $\overline\alpha>\alpha$ and integers $m_1, m_2$, such that  
for all $q \in\Delta$, %$q\in \Delta/2$
	\begin{equation}
	\iint_{\Gamma_j^{\alpha}(q)} u^2 dm(X) \leq C (2^{-j}r)^2 \iint_{\Gamma_{j-m_1 \rightarrow j+m_2}^{\overline{\alpha} }(q)} |\nabla u|^2 dm(X).\label{Poincareineq}
\end{equation}
The constants $m_1, m_2, \overline{\alpha}$ and $C$ only depend on $n, d, \alpha, C_0, C_1$.
\end{lemma}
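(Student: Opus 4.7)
My plan is to deduce the inequality from the classical Poincar\'e inequality (Lemma \ref{lm:ballPoincare}) applied to a Whitney-type cover of $\Gamma_j^\alpha(q)$ by interior balls, with the ball averages controlled by a short Harnack chain terminating at a ball centered on $\Gamma$, where the vanishing trace hypothesis $Tu = 0$ on $3\Delta$ yields a Friedrichs-type estimate.

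Every $X \in \Gamma_j^\alpha(q)$ satisfies $\delta(X) \gtrsim (1+\alpha)^{-1} 2^{-j}r$, so I can cover $\Gamma_j^\alpha(q)$ by a bounded-overlap family $\{B_l = B(X_l, \eta \, 2^{-j}r)\}$ with $\eta = \eta(\alpha, d, n)$ small enough that $2B_l \subset \Omega$ and each $B_l$ lies inside the enlarged cone $\Gamma^{\overline\alpha}_{j-m_1 \rightarrow j+m_2}(q)$ for suitable integers $m_1, m_2$ and aperture $\overline\alpha > \alpha$. Lemma \ref{lm:ballPoincare} on $B_l$ yields
\begin{equation*}
\iint_{B_l} u^2\, dm \leq C(2^{-j}r)^2 \iint_{B_l} |\nabla u|^2\, dm + 2\, m(B_l)\, u_{B_l}^2.
\end{equation*}
To handle $u_{B_l}^2$, I fix a ball $B^* = B(q^*, c\, 2^{-j}r)$ with $q^* \in \Gamma$ close to $q$, the constant $c$ chosen so that $2B^* \cap \Gamma \subset 3\Delta$, connect each $B_l$ to $B^*$ by a Harnack chain of bounded length (Lemma \ref{lm:Hcc}) that stays inside $\Gamma^{\overline\alpha}_{j-m_1 \rightarrow j+m_2}(q)$, and telescope Poincar\'e over each link of the chain to obtain
\begin{equation*}
|u_{B_l} - u_{B^*}|^2 \leq \frac{C(2^{-j}r)^2}{m(B_l)} \iint_{\Gamma^{\overline\alpha}_{j-m_1 \rightarrow j+m_2}(q)} |\nabla u|^2\, dm.
\end{equation*}

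The remaining task is to bound $m(B^*)\, u_{B^*}^2$, for which I use $Tu \equiv 0$ on $2B^* \cap \Gamma$. Applying Lemma \ref{lm:ballPoincare} on $2B^*$ to $v = u - u_{2B^*}$ and invoking the trace inequality for the weighted Sobolev space $W$ from \cite{elliptic}, the fact that $Tv = -u_{2B^*}$ a.e. on $B^* \cap \Gamma$ gives
\begin{equation*}
u_{2B^*}^2 \cdot \sigma(B^* \cap \Gamma) \leq C \left( (2^{-j}r)^{-1}\iint_{2B^*} v^2\, dm + (2^{-j}r) \iint_{2B^*} |\nabla u|^2\, dm \right) \leq C (2^{-j}r)\iint_{2B^*} |\nabla u|^2\, dm.
\end{equation*}
Since $\sigma(B^* \cap \Gamma) \approx (2^{-j}r)^d$ and $m(B^*) \approx (2^{-j}r)^{d+1}$, this yields the Friedrichs-type bound $m(B^*)\, u_{B^*}^2 \leq C(2^{-j}r)^2 \iint_{2B^*} |\nabla u|^2\, dm$. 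Summing the pointwise bounds $\iint_{B_l} u^2\, dm$ over the covering $\{B_l\}$ and using bounded overlap completes the proof. The main obstacle is the Friedrichs step at $B^*$: because $\Gamma$ has codimension strictly greater than one, one cannot extend $u$ by zero across a positive-Lebesgue-measure set and invoke a standard Poincar\'e-Friedrichs argument. The remedy is to work with the weighted measure $m$, under which $m(B(q,s)) \approx s^{d+1}$ exhibits $\Gamma$ as an effective codimension-one boundary, and to draw on the weighted trace theory of \cite{elliptic} to bridge the values of $u$ on $\Omega$ to the vanishing of its trace on $\Gamma$.
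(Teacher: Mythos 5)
Your overall architecture (cover the stripe by interior balls, telescope the Poincar\'e inequality of Lemma \ref{lm:ballPoincare} along Harnack chains, then kill the remaining average using the vanishing boundary data) matches the paper's, but the way you dispose of the final average has a genuine gap that breaks the statement you are trying to prove. Your Friedrichs step takes place on a ball $2B^*$ \emph{centered on} $\Gamma$, and its output is $m(B^*)u_{B^*}^2\lesssim (2^{-j}r)^2\iint_{2B^*}|\nabla u|^2\,dm$, where the gradient integral runs over the full Carleson box $2B^*$, all the way down to $\Gamma$. But the right-hand side of \eqref{Poincareineq} is the union of \emph{finitely many} cone stripes $\Gamma^{\overline\alpha}_{j-m_1\rightarrow j+m_2}(q)$, which stays a distance $\gtrsim 2^{-(j+m_2)}r$ away from $\Gamma$; no choice of finite $m_2$ and aperture $\overline\alpha$ makes $2B^*$ fit inside it. So what you prove is the weaker inequality with a Carleson box on the right, and that weaker form is not enough downstream: in Lemma \ref{lm:BMOprelim} one sums \eqref{Poincareineq} over $j$ and must dominate the result by the truncated square function, which only integrates over a cone with bounded overlap of the stripes; a sum of full Carleson boxes contains points far outside every non-tangential cone at $q$ and cannot be controlled by $S_{r/2}u(q)$. (Two secondary issues: the quantitative local $L^2$ trace bound $\int_{B^*\cap\Gamma}|Tv|^2d\sigma\lesssim s^{-1}\iint_{2B^*}|v|^2dm+s\iint_{2B^*}|\nabla v|^2dm$ is asserted rather than available among the results recalled from \cite{elliptic}, which give only the global $\dot H^{1/2}$ bound and an $L^1_{loc}$ local trace; and a Harnack chain in the sense of Lemma \ref{lm:Hcc} cannot terminate at a ball centered on $\Gamma$ --- you would have to stop at a corkscrew ball inside $B^*$ and add one more comparison of averages.)

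The idea you are missing is that $u$ is not merely a Sobolev function with vanishing trace but a non-negative \emph{solution} vanishing on $3\Delta$. The paper never integrates the gradient down to the boundary: it descends only $m$ dyadic scales from the stripe to the corkscrew point $A_{j+m}$, telescopes Poincar\'e along that (bounded-length) chain, and then uses boundary H\"older regularity (Lemma \ref{lm:bdHolder}) together with the boundary Harnack inequality (Lemma \ref{lm:bdHnk}) and the interior Harnack inequality to get $u(A_{j+m})\lesssim 2^{-m\beta}u(A_j)$, hence
\begin{equation*}
u_{B_{N+1}}^2\lesssim 2^{-2m\beta}\,\frac{1}{m(B_0)}\iint_{B_0}u^2\,dm .
\end{equation*}
For $m$ large but fixed this term is absorbed into the left-hand side, and the gradient integrals picked up along the way live in finitely many stripes $\Gamma^{\overline\alpha}_{j-m_1\rightarrow j+m_2}(q)$, exactly as required. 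If you want to salvage your route, you must replace the trace/Friedrichs step at $B^*$ with some mechanism that produces a small constant in front of $\iint_{B_0}u^2\,dm$ after descending only finitely many scales; the H\"older decay of the solution at the boundary is precisely that mechanism.
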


\begin{proof}
	Let $B$ be a ball compactly contained in $\Omega$. Recall that the solution $u \in W_r(\Omega)$, in particular, $\varphi u \in W$ for $\varphi \in C_0^{\infty}(\Omega)$ such that $\varphi \equiv 1$ on $B$. Apply the above Lemma \ref{lm:ballPoincare} to $\varphi u$ and square both sides, we get
\begin{equation}
	\label{eq:ballPoincare}
		 \iint_{B} |u(Y)-u_{B}|^2 dm(Y)  \leq C r_B^2  \iint_{B} |\nabla u(Y)|^2 dm(Y),
\end{equation}

For $j\in\mathbb{N}$, let $A_j$ denote a corkscrew point for $B(q,2^{-j}r)$, whose existence is guaranteed by Lemma \ref{lm:cksc}.
Let $m$ be a large integer whose value is to be determined later. Take $X\in \Gamma_j^\alpha(q)$, $X' = A_{j+m} $, then 
\begin{equation}\label{eq:tmp58}
	\delta(X) > \frac{1}{1+\alpha}|X-q| \geq  \frac{2^{-j-1} r}{1+\alpha}, \quad \delta(X') \geq \frac{ 2^{-(j+m)}r}{M},
\end{equation}
\[ |X-X'| \leq |X-q| + |q-X'| \leq  2^{-j} r + 2^{-(j+m)} r \leq 2^{1-j} r. \] 
Apply Lemma \ref{lm:Hcc} and Remark \ref{rmk:Hcc} to $X, X'$ with $s = 2^{-(j+m)}r/M$ and $\Lambda = 2^{m+1} M$, we can find balls $B_0 = B(X,s/2)$, $B_i = B(Z_i, \tau/4)$ with $\tau = c\Lambda^{-d/(n-1-d)}s$, $B_{N+1}=B(X',s/2)$ that form a Harnack chain connecting $X$ to $X'$, and satisfy \eqref{eq:Hccount}, \eqref{eq:Hclb} and \eqref{eq:Hcub}.
Hence by Lemma 2.3 (i) of \cite{elliptic} and \eqref{eq:Hcub}, \eqref{eq:Hclb}, we have
\begin{equation}\label{eq:Hccweightlb}
	m(B_i) \geq C^{-1} \left( \frac{\tau}{4} \right)^n \dist(B_i,\Gamma)^{d-n+1} \gtrsim \tau^n (\Lambda s)^{d-n+1} \sim \Lambda^{1-n} \tau^{d+1},
\end{equation}
and
\begin{equation}\label{eq:Hccweightub}
	m(B_i) \leq C\left(\frac{\tau}{4} \right)^n \dist(B_i,\Gamma)^{d-n+1} \lesssim \tau^n  \tau^{d-n+1} \sim  \tau^{d+1}
\end{equation}
for all $i=0,\cdots,N,N+1$. 
A simple computation shows $B_{i+1} \subset 3B_i$ for all $i=1, \cdots N-1$, and $B_1 \subset \frac{3}{2} B_0, B_N \subset \frac{3}{2} B_{N+1}$, if $m$ is sufficiently large.
Therefore for each $i=1, \cdots, N-1$,
\begin{align}
	|u_{B_{i+1}} -u_{3 B_{i}}|^2 
	& \leq \left( \frac{1}{m(B_{i+1})} \iint_{B_{i+1}}|u(X)-u_{3 B_i}| dm(X) \right)^2 \nonumber \\
	& \leq \frac{1}{m(B_{i+1})}  \iint_{3 B_i} |u(X) - u_{3 B_i}|^2 dm(X) \nonumber \\
	& \lesssim \Lambda^{n-1}\tau^{1-d}  \iint_{3B_i} |\nabla u(Y)|^2 dm(Y) \qquad \text{by \eqref{eq:ballPoincare}}, \eqref{eq:Hccweightlb}.
\end{align}
Similarly
\[ |u_{B_i}-u_{3 B_i}|^2 \lesssim \Lambda^{n-1}\tau^{1-d} \iint_{3 B_i}|\nabla u(Y)|^2 dm(Y). \]
Hence
\begin{equation}
	|u_{B_i}-u_{B_{i+1}}|^2 \leq C\Lambda^{n-1}\tau^{1-d} \iint_{3 B_i}|\nabla u(Y)|^2 dm(Y). \label{eq:compave}
\end{equation}
A similar argument shows that for the end-point case $i=0$ or $N+1$,
\begin{align}\label{eq:comptop}
	|u_{B_i} - u_{B_{i\pm 1}}|^2 & \lesssim  \max\{s^{1-d}, \Lambda^{n-1} s^2 \tau^{-1-d} \} \iint_{\frac{3}{2} B_i} |\nabla u(Y)|^2 dm(Y) \nonumber \\
	& \sim \Lambda^{n-1} s^2 \tau^{-1-d} \iint_{\frac{3}{2} B_i} |\nabla u(Y)|^2 dm(Y) . 
\end{align}
The last line is justified since $\Lambda \gg 1$ implies $ \tau \ll s$.
Combining this observation, \eqref{eq:compave}, \eqref{eq:comptop} and \eqref{eq:Hccount}, we get
\begin{align}
	\iint_{B_0} |u(X) - u_{B_{N+1}}|^2 dm(X) & \lesssim N \cdot \iint_{B_0} |u(X) - u_{B_0}|^2 dm(X) + N \cdot m(B_0) \sum_{i=0}^{N} |u_{B_{i}} - u_{B_{i+1}} |^2 \nonumber \\
	& \lesssim N\Lambda^{n-1} s^2 \left(\frac{s}{\tau} \right)^{d+1}  \iint_{\frac{3}{2}  B_0 \bigcup \left(\bigcup\limits_{i=1}^{N} 3B_i \right) \bigcup \frac{3}{2} B_{N+1}} |\nabla u(Y)|^2 dm(Y) \nonumber \\
	& \leq C' \Lambda^{\frac{n-1+d(d+1)}{n-1-d} + n-1} s^2 \iint_{\frac{3}{2}  B_0 \bigcup \left(\bigcup\limits_{i=1}^{N} 3B_i \right) \bigcup \frac{3}{2} B_{N+1} } |\nabla u(Y)|^2 dm(Y).\label{eq:compsbst}
\end{align}
 
On the other hand, by Harnack inequality
\[ u(X) \leq C u(X') \quad \text{ for all } X\in B_{N+1} = B(X',s/2). \] 
Recall that $X' = A_{j+m}$. For any $q\in \Delta$, by the assumption we know that $u\in W_r(B(q,2r))$ vanishes on $\Delta(q,2r)$. By the boundary H\"older regularity (Lemma \ref{lm:bdHolder}) and boundary Harnack principle (Lemma \ref{lm:bdHnk}) we have
\[ u(X') \leq C 2^{-m\beta} u(A_j), \]
with a constant $C$ independent of $j$ and $m$. Thus
\begin{equation}\label{eq:comptail}
	u_{B_{N+1}}^2 \lesssim u^2(X') \lesssim 2^{-2m\beta} u^2(A_j) \lesssim 2^{-2m\beta} \cdot \frac{1}{m(B_0)} \iint_{B_0} u^2 dm(X) .
\end{equation} 
The last inequality holds because $A_j$ is a corkscrew point and $B_0 = B(X,s/2)$ for some $X\in\Gamma_j(q)$.
Combining \eqref{eq:comptail} and \eqref{eq:compsbst} we obtain
\begin{align}
	& \iint_{B_0} u^2 dm(X) \nonumber \\
 & \leq 2  m(B_0)\left(u_{B_{N+1}}\right)^2 + 2\iint_B |u(x) - u_{B_{N+1}}|^2 dm(X) \nonumber \\
	& \leq A_1 2^{-2m\beta} \iint_{B_0} u^2 dm(X) + A_2 \Lambda^{\frac{n-1+d(d+1)}{n-1-d} + n-1} s^2 \iint_{\frac{3}{2} B_0 \bigcup \left(\bigcup\limits_{i=1}^{N} 3B_i\right) \bigcup \frac{3}{2} B_{N+1}} |\nabla u(Y)|^2 dm(Y). \label{eq:beforeabsorb}  
\end{align}
Choose $m$ big enough such that 
\begin{equation}\label{eq:largem}
	A_1 2^{-2m \beta} \leq \frac{1}{2}, \quad \text{ as well as } 2 \cdot \frac{2^{-m}}{M} \leq \frac{1}{2(1+\alpha)},
\end{equation}
 then we can absorb the first term on the right hand side of \eqref{eq:beforeabsorb} to the left. Recall that $B_0 = B(X, s/2)$ for $X$ satisfying \eqref{eq:tmp58}. The reason for the second assumption in \eqref{eq:largem} is to guarantee the enlarged ball $\frac{3}{2} B_0$ is compactly contained in $\Omega$. 
 Fix the value of $m$ from now on, thus the value of $\Lambda = 2^{m+1}/M $ is also fixed. We get
\begin{equation}\label{eq:PoincareNTball}
	\iint_{B_0} u^2 dm(X) \leq C s^2 \iint_{\frac{3}{2} B_0 \bigcup \left(\bigcup\limits_{i=1}^{N} 3 B_i\right) \bigcup \frac{3}{2} B_{N+1}} |\nabla u(y)|^2 dy, 
\end{equation}
where $s=2^{-(j+m)}r/M$ and the constant $C$ depends on $d,n,C_0,C_1$ (Recall the values of corkscrew constant $M$ and Harnack chain constant $c$ only depend on $d,n,C_0,C_1$). 
Since $B_0 = B(X, s/2)$ with center $X\in\Gamma_j^{\alpha}(q)$, it is a simple exercise to show that given the second assumption of \eqref{eq:largem}, there exists an aperture $\alpha_1>\alpha$ such that
\begin{equation}\label{eq:IR1}
	\frac{3}{2} B_0 \subset \mathlarger{\Gamma}^{\alpha_1}_{j-1 \rightarrow j+1}(q).
\end{equation}
A similar statement holds for $\frac{3}{2} B_{N+1}$.
Moreover \eqref{eq:Hclb} and \eqref{eq:Hcub} imply that for $i=1, \cdots, N$, there exist an aperture $\alpha_2>\alpha$ and an integer $m_0$ depending on the constants $c, M$ from Lemmas \ref{lm:Hcc} and \ref{lm:cksc}, such that
\begin{equation}\label{eq:IR2}
		3B_i \subset \mathlarger{\Gamma}_{j-3 \rightarrow j+m+m_0 }^{\alpha_2}(q).
\end{equation}
Let $\overline\alpha = \max\{\alpha_1, \alpha_2\}$.
Combining the above observations with \eqref{eq:PoincareNTball} we get
\begin{equation}\label{eq:PoincareNTballIR}
	\iint_{B_0} u^2 dm(X) \leq C s^2 \iint_{\mathlarger{\Gamma}_{j-3 \rightarrow j+m+m_0 }^{\overline\alpha}(q) } |\nabla u(y)|^2 dy. 
\end{equation}
	
Consider the covering
\begin{equation}
	\Gamma_j^{\alpha}(q) \subset \bigcup_{X\in\Gamma_j^\alpha(q)} B\left( X, \frac{s}{10} \right).
\end{equation}
We can extract a finite Vitali sub-covering $\{B^k = B(X_k, s/2)\}_k$ such that
\begin{equation}
	\Gamma_j^{\alpha}(q) \subset \bigcup_k B^k
\end{equation}
and $\{B^k/5 = B(X_k, s/10)\}_k$ is mutually disjoint. 
Moreover the number of balls $B^k$'s is uniformly bounded by a constant $C(n, m, M)$. Note that \eqref{eq:PoincareNTballIR} holds for all such balls $B^k$ in place of $B_0$, we deduce
\begin{align}
	\iint_{\Gamma_j^\alpha(q)} u^2 dm(X) \leq \sum_k \iint_{B^k} u^2 dm(X)  \leq CC(n,m,M) s^2 \iint_{\mathlarger{\Gamma}_{j-3 \rightarrow j+m+m_0 }^{\overline\alpha}(q) } |\nabla u(y)|^2 dy.
\end{align}
Since the value of $m$ is fixed, we finish the proof of Lemma \ref{lm:Poincare}.
\end{proof}

\begin{lemma}[non-degeneracy of harmonic measure, Lemma 11.73 of \cite{elliptic}]\label{lm:nondeg}
	Let $\lambda>1$ be given. There exists a constant $C_\lambda >1$ such that for any $q\in\Gamma$, $r>0$, and $A=A_r(q)$, a corkscrew point from Lemma \ref{lm:cksc}, we have
	\begin{equation}\label{eq:nondegclose}
		\omega^X(B(q,r)\cap\Gamma) \geq C_{\lambda}^{-1} \quad \text{for } X\in B(q,r/\lambda), 
	\end{equation}
	\begin{equation}\label{eq:nondegfar}
		\omega^X(B(q,r)\cap\Gamma) \geq C_{\lambda}^{-1} \quad \text{for } X\in B(A,\delta(A)/\lambda).
	\end{equation}
\end{lemma}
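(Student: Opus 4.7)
The plan is to reduce the lemma to the single claim that $\omega^{A_r(q)}(B(q,r)\cap \Gamma) \geq c_0$ for some universal constant $c_0 > 0$, and then propagate this lower bound to the two regions in question via Harnack-type arguments.

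For the universal corkscrew bound, I would use Urysohn's lemma on the metric space $\Gamma$ to construct a cutoff $\phi \in C_0^0(\Gamma)$ with $\chi_{B(q,r/2)\cap\Gamma} \leq \phi \leq \chi_{B(q,r)\cap\Gamma}$, and consider the continuous solution $v = U\phi$ from Lemma \ref{lm:dfsol}. Since $0 \leq v \leq 1$ and $v \equiv 1$ on $B(q,r/2)\cap\Gamma$, the function $1-v$ is a non-negative solution in $B(q,r/2)$ with vanishing trace on $B(q,r/2)\cap\Gamma$, so Lemma \ref{lm:bdHolder} yields $v(X) \geq 1 - C(|X-q|/r)^{\beta}$ on $B(q,r/4)\cap\Omega$. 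In particular $v \geq 1/2$ on $B(q,cr)\cap\Omega$ for some universal $c > 0$, and hence $v(A_{cr}(q)) \geq 1/2$. Since both $A_{cr}(q)$ and $A_r(q)$ have distance at least $cr/M$ from $\Gamma$ and lie within distance $2r$ of each other, Lemma \ref{lm:Hcc} provides a Harnack chain of universally bounded length between them, and iterating the interior Harnack inequality (Lemma \ref{lm:Hnk}) yields $v(A_r(q)) \geq c_0$. Therefore $\omega^{A_r(q)}(B(q,r)\cap\Gamma) \geq \int_{\Gamma} \phi \, d\omega^{A_r(q)} = v(A_r(q)) \geq c_0$.

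For part (b), I would use that $u(X) := \omega^X(B(q,r)\cap\Gamma)$ is a non-negative solution in $\Omega$ (Lemma \ref{lm:hmassol}). For $\lambda \geq 3$ the ball $B = B(A, \delta(A)/\lambda)$ satisfies $3B \subset \Omega$, so Lemma \ref{lm:Hnk}(1) gives $u \geq C^{-1} u(A) \geq C^{-1} c_0$ on $B$; for $1 < \lambda < 3$ the same bound follows by covering $B(A, \delta(A)/\lambda)$ with a $\lambda$-dependent finite family of Whitney balls and iterating Harnack. For part (a), when $\lambda$ exceeds a universal threshold $\lambda_0$ I would rerun the Step 1 argument with the smaller inner ball $B(q, cr/\lambda) \cap \Gamma$ in place of $B(q,r/2)\cap\Gamma$, which yields $\omega^X(B(q,r)\cap\Gamma) \geq 1/2$ on $B(q,r/\lambda)\cap\Omega$; for the remaining range $1 < \lambda \leq \lambda_0$, one combines the Step 1 bound on $B(q,cr)\cap\Omega$ with interior Harnack chains for points $X$ where $\delta(X)$ is comparable to $r$, and with the boundary H\"older estimate recentered at the nearest point $q_X \in \Gamma$ (which lies in $B(q,r)\cap\Gamma$ whenever $\lambda > 2$) for points $X$ very close to $\Gamma$.

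The main obstacle is obtaining the universal (not $\lambda$-dependent) corkscrew bound in Step 1. The interplay is delicate because the boundary H\"older estimate only gives effective decay in a much smaller inner ball than the one on which the cutoff equals one, while controlling the Harnack chain between two corkscrew points at comparable scales uses the condition $d < n-1$ essentially through Lemma \ref{lm:Hcc}. A secondary but more routine difficulty is part (a) for $\lambda$ close to $1$, where the Harnack chain from an arbitrary $X \in B(q,r/\lambda)$ has length depending on $\delta(X)$; this is circumvented by shifting the center of the boundary H\"older estimate to the nearest point of $\Gamma$ to $X$.
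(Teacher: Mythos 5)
This lemma is not proved in the present paper at all: it is imported verbatim as Lemma 11.73 of \cite{elliptic}, so there is no in-paper argument to compare yours against. Judged on its own merits, your proposal follows the standard route (a boundary H\"older decay estimate for $\omega^X(\Gamma\setminus B(q,r))$ near $q$, upgraded to the corkscrew point by a Harnack chain of universally bounded length, then propagated by interior Harnack and by recentering at the nearest boundary point), and I consider it essentially correct. Two small points of hygiene. First, in Step 1 you should justify that $1-v$ is an admissible input for Lemma \ref{lm:bdHolder}: $1-\phi$ is not in $C_0^0(\Gamma)$, so either write $1-v(X)=\omega^X(\Gamma\setminus B(q,r))+\int_{B(q,r)\cap\Gamma}(1-\phi)\,d\omega^X$ and invoke Lemmas \ref{lm:hmassol} and \ref{lm:fchiE} on a ball avoiding $\Gamma\setminus B(q,r/2)$, or simply bound $1-v\le \omega^X(\Gamma\setminus B(q,r/2))=u_E(X)$ and apply the H\"older estimate to $u_E$; the Urysohn cutoff is then not even needed for the lower bound, since $\omega^X(\Gamma)=1$. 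Second, the clause ``rerun Step 1 with the smaller inner ball $B(q,cr/\lambda)$'' is stated backwards: shrinking the set where the datum equals $1$ \emph{weakens} the boundary H\"older decay (the oscillation bound scales like $(s/\rho)^\beta$ in the radius $\rho$ of the vanishing set), so it cannot enlarge the region where $v\ge 1/2$. What you actually want is the trivial observation that the original Step 1 already gives $v\ge 1/2$ on $B(q,cr)\supset B(q,r/\lambda)$ once $\lambda\ge 1/c$. Likewise, the condition ``$q_X\in B(q,r)$ whenever $\lambda>2$'' is the wrong dichotomy for $\lambda$ near $1$; the relevant split is on $\delta(X)$: if $\delta(X)<\tfrac12(1-1/\lambda)r$ then automatically $q_X\in B(q,r)$ and $B(q_X,\tfrac12(1-1/\lambda)r)\cap\Gamma\subset B(q,r)\cap\Gamma$, so the recentered H\"older estimate applies, while if $\delta(X)\gtrsim_\lambda r$ a Harnack chain of $\lambda$-dependent length to $A_r(q)$ finishes the proof. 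With these corrections the argument goes through.
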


In \cite{elliptic} the authors also prove the existence, uniqueness and properties of the Green function, that is, formally, a function $G$ defined on $\Omega \times \Omega$ such that for any $Y\in\Omega$,
\[ \left\{ \begin{array}{ll}
 	LG(\cdot,Y) = \delta_Y & \text{ in } \Omega \\
 	G(\cdot,Y) = 0 & \text{ on }\Gamma
 \end{array}\right.
\]
where $\delta_Y$ is the delta function.

\begin{lemma}[estimates of Green function, Lemma 11.78 of \cite{elliptic}]\label{lm:CFMS}
	There exists a constant $C\geq 1$, such that for any $q\in\Gamma$ and $r>0$, $\Delta = B(q,r)\cap\Gamma$ and a corkscrew point $A = A_r(q)$, then
	\begin{equation}\label{eq:CFMS}
		C^{-1} r^{d-1} G(X_0,A) \leq \omega^{X_0}(\Delta) \leq C r^{d-1} G(X_0,A) \quad \text{for } X_0 \in\Omega \setminus B(q,2r).
	\end{equation}
\end{lemma}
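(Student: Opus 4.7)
The plan is to follow the classical Caffarelli–Fabes–Mortola–Salsa (CFMS) argument, adapted to this degenerate / lower-dimensional setting. I would consider, as functions of $Y\in\Omega$, the two non-negative $L$-solutions
\[
u(Y):=\omega^Y(\Delta),\qquad h(Y):=r^{d-1}G(Y,A),
\]
and compare them via the maximum principle on the exterior region $\Omega_*:=\Omega\setminus\overline{B(q,2r)}$. Observe that $u\le 1$, $u=\chi_{\Delta}$ on $\Gamma$ (so $u$ vanishes on $\Gamma\cap\Omega_*$), and $h$ vanishes on all of $\Gamma$; since $A\in B(q,r)\subset B(q,2r)$ the only interior singularity of $h$ lies outside $\Omega_*$, so $h$ is a bona fide $L$-solution there. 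The normalization that will pin down the constants is $u(A)=\omega^A(\Delta)\ge c_0>0$, which follows from Lemma~\ref{lm:nondeg} applied at $X=A\in B(A,\delta(A)/\lambda)$.

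The crux is to establish a two-sided pointwise comparison $c_1\,h(Y)\le u(Y)\le c_2\,h(Y)$ on the ``interior boundary'' $\Sigma:=\partial B(q,2r)\cap\Omega$. I would split $\Sigma$ into its ``interior'' part $\Sigma_{\mathrm{in}}=\{Y\in\Sigma:\delta(Y)\ge c_3 r\}$ and its ``near-boundary'' part $\Sigma_{\mathrm{bd}}=\{Y\in\Sigma:\delta(Y)<c_3 r\}$. For $Y\in\Sigma_{\mathrm{in}}$, a Harnack chain (Lemmas~\ref{lm:Hcc} and~\ref{lm:Hnk}) of bounded length — which stays away from the pole at $A$ because $|Y-A|\ge r$ — connects $Y$ to $A$ and yields $u(Y)\sim u(A)\sim 1$ and a comparable two-sided estimate for $h(Y)$. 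For $Y\in\Sigma_{\mathrm{bd}}$ both functions decay toward $\Gamma$; boundary Harnack (Lemma~\ref{lm:bdHnk}) applied to $h$ gives one-sided upper control by its value at a corkscrew for $B(q,2r)$, and a parallel application to $u$, combined with the boundary Hölder estimate (Lemma~\ref{lm:bdHolder}) on $\Delta(q,2r)\setminus\Delta$ where $u$ vanishes, matches the two decay rates. Once the comparison on $\Sigma$ is in place, both $u$ and $h$ vanish on $\Gamma\cap\Omega_*$, and the maximum principle on $\Omega_*$ propagates the comparison to every $X_0\in\Omega_*$, yielding \eqref{eq:CFMS}.

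\textbf{Main obstacle.} The most delicate step is the near-boundary portion of $\Sigma$: both $u$ and $h$ vanish on the adjacent part of $\Gamma$, and one needs their decay rates toward $\Gamma$ to be comparable, which is really a boundary comparison principle for ratios, strictly stronger than the one-sided boundary Harnack given by Lemma~\ref{lm:bdHnk}. I expect this to follow from an iterative dyadic application of Lemmas~\ref{lm:bdHnk} and~\ref{lm:bdHolder} — essentially the standard bootstrap deriving the ``3G'' / Carleson-type comparison from one-sided boundary Harnack — using Harnack chains whose constants, through Lemma~\ref{lm:Hcc}, depend on $n-1-d$; this is the step where the lower-dimensional nature of $\Gamma$ intervenes substantively and is the main place where one must be careful to match the scaling $r^{d-1}$ appearing in the statement.
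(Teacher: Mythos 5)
This lemma is not proved in the present paper at all: it is quoted verbatim from \cite{elliptic} (Lemma 11.78) as a preliminary, so there is no internal proof to compare with, and your argument has to be judged on its own terms. Your overall CFMS strategy is right in spirit, and the half $r^{d-1}G(X_0,A)\lesssim\omega^{X_0}(\Delta)$ does go through by a maximum-principle comparison -- but the comparison surface must be $\partial B(A,\delta(A)/2)$, which stays entirely in the interior of $\Omega$ (there $G(\cdot,A)\lesssim\delta(A)^{1-d}\lesssim r^{1-d}$ by the pointwise bound on $G$ near its pole, while $\omega^{\cdot}(\Delta)\gtrsim 1$ by \eqref{eq:nondegfar}), not $\partial B(q,2r)$. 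The genuine gap is exactly the ``main obstacle'' you flag and then defer: on $\Sigma_{\mathrm{bd}}$ you need two-sided comparability of the decay rates of two different solutions vanishing on $\Gamma$, i.e.\ $u(Y)\sim r^{d-1}G(Y,A)$ as $\delta(Y)\to0$. That is precisely the comparison principle (boundary Harnack for ratios), which in \cite{elliptic} is Lemma 11.135 -- quoted here as Lemma \ref{lm:cop} -- and is \emph{deduced from} the present lemma; invoking it, or claiming to re-derive it by iterating Lemmas \ref{lm:bdHnk} and \ref{lm:bdHolder}, is circular. Moreover the iteration you describe cannot close on its own: both of those lemmas give only \emph{upper} bounds (domination by the value at a corkscrew point, and H\"older decay from above), whereas the comparison on $\Sigma_{\mathrm{bd}}$ requires matching \emph{lower} bounds -- on $G(Y,A)$ to get $u\le c_2h$, and on $u(Y)$ to get $c_1h\le u$ -- as $Y\to\Gamma$. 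No such lower bounds follow from one-sided boundary Harnack plus boundary H\"older continuity.

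The harder half, $\omega^{X_0}(\Delta)\lesssim r^{d-1}G(X_0,A)$, is obtained in \cite{elliptic} (as in the classical CFMS argument) by a mechanism that never touches the sphere near $\Gamma$. Take $\phi\in C_0^\infty(B(q,2r))$ with $\phi\equiv1$ on $B(q,r)$ and $|\nabla\phi|\lesssim r^{-1}$, and use the representation of solutions by the Green function, which for $X_0\notin\supp\phi$ reads
\begin{equation*}
\omega^{X_0}(\Delta)\le\int_\Gamma\phi\,d\omega^{X_0}=-\iint_\Omega A(Y)\nabla_YG(X_0,Y)\cdot\nabla\phi(Y)\,dY.
\end{equation*}
One then estimates the right-hand side by Cauchy--Schwarz, the boundary Caccioppoli inequality \eqref{eq:bdCcpl} applied to $G(X_0,\cdot)$ (a nonnegative solution vanishing on $\Delta(q,Cr)$ when $X_0$ is far), the volume bound $m(B(q,2r))\sim r^{d+1}$ from \eqref{eq:weight}, and the one-sided boundary Harnack inequality of Lemma \ref{lm:bdHnk} to replace $\sup_{B(q,4r)}G(X_0,\cdot)$ by $G(X_0,A)$; Harnack chains and the doubling of $\omega$ then reduce the pole restriction from $X_0\notin B(q,Cr)$ to $X_0\notin B(q,2r)$. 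The exponents combine as $r^{-1}\cdot r^{(d+1)/2}\cdot r^{-1}\cdot r^{(d+1)/2}=r^{d-1}$, which is where the normalization in \eqref{eq:CFMS} actually comes from. As written, your comparison on $\Sigma_{\mathrm{bd}}$ would fail, and the proof should be restructured along these lines.
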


\begin{lemma}[doubling of harmonic measure, Lemma 11.102 of \cite{elliptic}]\label{lm:doubling}
	 For $q\in\Gamma$ and $r>0$, we have
	\begin{equation}\label{eq:doubling}
		\omega^X(B(q,2r)\cap\Gamma) \leq C \omega^X(B(q,r)\cap\Gamma)
	\end{equation}
	for any $X\in\Omega \setminus B(q,4r)$.
\end{lemma}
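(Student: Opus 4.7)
My plan is to reduce the doubling estimate to a comparison of Green function values via Lemma \ref{lm:CFMS}, and then to handle the Green function comparison by a standard Harnack chain argument.

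First I would fix $q\in\Gamma$ and $r>0$ and let $A_r, A_{2r}$ be the corkscrew points associated to the surface balls of radius $r$ and $2r$ respectively, as provided by Lemma \ref{lm:cksc}. For any $X\in\Omega\setminus B(q,4r)$, the hypothesis of Lemma \ref{lm:CFMS} is satisfied for both surface balls, yielding
\[
	\omega^X(B(q,2r)\cap\Gamma) \leq C(2r)^{d-1} G(X,A_{2r}), \qquad \omega^X(B(q,r)\cap\Gamma) \geq C^{-1} r^{d-1} G(X,A_r).
\]
Since $(2r)^{d-1}$ and $r^{d-1}$ differ only by a dimensional factor, the doubling inequality reduces to showing $G(X,A_{2r}) \lesssim G(X,A_r)$.

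To carry out this comparison, I would observe that both corkscrew points lie in $B(q,2r)$ with $\delta(A_r), \delta(A_{2r}) \geq r/M$ and are separated by at most $4r$, while $|X-A_r|, |X-A_{2r}| \geq 2r$ thanks to the hypothesis $X\notin B(q,4r)$. Invoking Lemma \ref{lm:Hcc} with scale $s\sim r$ and $\Lambda$ an absolute constant, together with the bookkeeping in Remark \ref{rmk:Hcc}, one obtains a Harnack chain of balls $B_i$ linking $A_r$ to $A_{2r}$, whose number $N$ is bounded in terms of the allowable parameters and each of which satisfies $\dist(B_i,\Gamma)\gtrsim r$ and $B_i\subset B(q,C'r)$ for a dimensional $C'$. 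Consequently $3B_i$ sits well inside $\Omega\setminus\{X\}$. Using the symmetry of the matrix $A$, the function $Y\mapsto G(X,Y)$ is a non-negative $L$-solution in $\Omega\setminus\{X\}$, so iterating the interior Harnack inequality (Lemma \ref{lm:Hnk}) along the chain yields $G(X,A_{2r})\leq C\,G(X,A_r)$ with $C$ depending only on $n,d,C_0,C_1$.

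Combining the two steps produces the claimed doubling. The only delicate point is ensuring that the Harnack chain steers clear of the pole $X$, which is exactly what the gap between the radii $2r$ (where the corkscrew points live) and $4r$ (where $X$ is excluded) provides; without this slack the Green function comparison would fail near the pole. I do not anticipate any new analytic estimates being needed beyond assembling Lemmas \ref{lm:Hcc}, \ref{lm:Hnk}, \ref{lm:cksc}, and \ref{lm:CFMS}.
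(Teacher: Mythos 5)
This lemma is quoted verbatim from \cite{elliptic} (Lemma 11.102) and the present paper supplies no proof of its own, but your argument---reducing doubling to the Green function comparison $G(X,A_{2r})\lesssim G(X,A_r)$ via the two-sided estimate of Lemma \ref{lm:CFMS} (whose hypothesis at scale $2r$ is exactly $X\notin B(q,4r)$) and closing it with a Harnack chain of boundedly many balls---is correct and is essentially the standard argument used in the cited reference. The only cosmetic point is that for the endpoint balls $B(A_r,s/2)$ and $B(A_{2r},s/2)$ the triple dilation may touch $\Gamma$, so one should use a slightly smaller dilation there, just as the paper does with $\tfrac{3}{2}B_0$ in the proof of Lemma \ref{lm:Poincare}.
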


\begin{lemma}[change of poles, Lemma 11.135 of \cite{elliptic}]\label{lm:cop}
	Let $q\in\Gamma$ and $r>0$ be given, and let $A = A_r(q)$ be a corkscrew point as in Lemma \ref{lm:cksc}. Let $E, F \subset \Delta(q,r)$ be two Borel subsets of $\Gamma$ such that $\omega^A(E)$ and $\omega^A(F)$ are positive. Then 
	\begin{equation}
		\frac{\omega^X(E)}{\omega^X(F)} \sim \frac{\omega^A(E)}{\omega^A(F)}, \quad \text{ for any }X\in \Omega \setminus B(q,2r).
	\end{equation}
	In particular with the choice $F=\Delta(q,r)$, 
	\begin{equation}
		\frac{\omega^X(E)}{\omega^X(\Delta(q,r))} \sim \omega^A(E) \quad \text{ for any } X\in\Omega \setminus B(q,2r).
	\end{equation}
\end{lemma}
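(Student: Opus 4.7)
Proof plan.

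My plan is first to reduce to the case $F=\Delta(q,r)$ by dividing the two asserted equivalences. Since $\omega^A(\Delta(q,r))\sim 1$ by non-degeneracy (Lemma~\ref{lm:nondeg}), the statement is equivalent to
\[
\omega^X(E) \sim \omega^X(\Delta(q,r))\cdot \omega^A(E), \qquad E\subset \Delta(q,r) \text{ Borel}, \; X\in\Omega\setminus B(q,2r).
\]
Next I would use outer regularity of $\omega^X$ and $\omega^A$ (Lemma~\ref{lm:dfhm}) to approximate $E$ by open sets from above, a Vitali--Besicovitch covering of each open set by disjoint surface sub-balls, and the doubling property (Lemma~\ref{lm:doubling}) to control the overlaps. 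This reduces the problem to the special case $E=\Delta'=\Delta(q',r')$ for a single surface sub-ball with, say, $r'\leq r/(4M)$, and then to pass back to finite unions of disjoint sub-balls and eventually to arbitrary Borel sets by monotone approximation.

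For this surface-ball case, three applications of the CFMS estimate (Lemma~\ref{lm:CFMS}) convert the problem into a Green-function comparison. Setting $A'=A_{r'}(q')$, the size constraint $r'\leq r/(4M)$ together with $\delta(A)\geq r/M$ ensures $A\notin B(q',2r')$, while $X\notin B(q',2r')\subset B(q,2r)$ is automatic from the hypothesis. Hence
\[
\omega^X(\Delta')\sim (r')^{d-1}G(X,A'),\quad \omega^X(\Delta(q,r))\sim r^{d-1}G(X,A),\quad \omega^A(\Delta')\sim (r')^{d-1}G(A,A'),
\]
so the target estimate collapses to the two-sided Green-function comparison
\[
G(X,A') \sim r^{d-1}\,G(X,A)\,G(A,A').
\]

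The main obstacle will be this Green-function comparison, whose two arguments $X\notin B(q,2r)$ and $A\in B(q,r)$ do not lie in a common Harnack chain region. The standard engine is a two-sided boundary Harnack / comparison principle, an enhancement of Lemma~\ref{lm:bdHnk}: if $u,v$ are non-negative solutions in $B(q,2r)\cap\Omega$ with $Tu=Tv=0$ on $\Delta(q,2r)$, then $u(Y)/v(Y)\sim u(A)/v(A)$ throughout $B(q,r)\cap\Omega$. Such a comparison principle is not recorded in the preliminaries but is derivable from Lemma~\ref{lm:bdHnk}, the Harnack chain condition (Lemma~\ref{lm:Hcc}), and the non-degeneracy and doubling of harmonic measure (Lemmas~\ref{lm:nondeg} and~\ref{lm:doubling}). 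Applying it with $u(Y)=G(Y,X)$ and a carefully chosen auxiliary solution $v$ (for instance, $v(Y)=G(Y,X_1)$ with $X_1$ a reference pole outside $B(q,2r)$ for which all relevant Green-function values can be evaluated by CFMS at scale $r$), and comparing at $Y=A$ and $Y=A'$, yields that the ratio $G(X,A')/G(X,A)$ is essentially independent of $X\in\Omega\setminus B(q,2r)$ and is comparable to $r^{d-1}\,G(A,A')$. The passage from surface balls back to arbitrary Borel $E$ is then immediate from the regularity and covering reductions of the first step.
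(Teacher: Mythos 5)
This lemma is not proved in the paper at all: it is imported verbatim as Lemma 11.135 of \cite{elliptic}, so there is no in-paper argument to compare against. Your skeleton (outer regularity and a covering reduction to surface sub-balls, then Lemma \ref{lm:CFMS} to convert the statement into the Green-function comparison $G(X,A')\sim r^{d-1}G(X,A)G(A,A')$, then a boundary comparison principle) is essentially the route taken in the cited reference, and the bookkeeping in the reduction (the constraint $r'\leq r/(4M)$, the use of doubling and of $\omega^A(\Delta(q,r))\sim 1$) is sound.

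The genuine gap is the two-sided comparison principle, which you dismiss as ``derivable from Lemma \ref{lm:bdHnk}, the Harnack chain condition, and the non-degeneracy and doubling of harmonic measure.'' It is not. Lemma \ref{lm:bdHnk} is one-sided: it gives only $u(Y)\leq Cu(A)$ on $B(q,r)$. The matching lower bound $u(Y)\gtrsim u(A)\,\omega^{Y}(\Delta(q,r))$ (equivalently, via Lemma \ref{lm:CFMS}, $u(Y)\gtrsim u(A)\,r^{d-1}G(Y,A)$) is the actual content of the comparison principle. Proving it requires a maximum-principle argument in $\bigl(B(q,2r)\cap\Omega\bigr)\setminus\overline{B(A,\delta(A)/2)}$, where one has no control of $u$ from below on $\partial B(q,2r)\cap\Omega$; this forces one to work with the Green function of a truncated domain or to run an iteration, which is precisely the machinery of Section 11 of \cite{elliptic} and is supplied by none of the lemmas you list. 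A second, smaller defect: your reduction via the auxiliary pole $X_1$ is circular as written, since evaluating $G(X_1,A')/G(X_1,A)$ by Lemma \ref{lm:CFMS} produces the ratio $\omega^{X_1}(\Delta')/\omega^{X_1}(\Delta(q,r))$, which is exactly the statement being proved, for the pole $X_1$. This is fixable: either choose $X_1$ with $\delta(X_1)\gtrsim r$ and $|X_1-q|\lesssim r$ so that $X_1$ and $A$ are joined by a Harnack chain of bounded length, whence $\omega^{X_1}(E)\sim\omega^{A}(E)$ for every Borel $E$ by Harnack applied to $Y\mapsto\omega^{Y}(E)$; or, more directly, apply the comparison principle to $G(\cdot,X)$ and $G(\cdot,A)$ in $B(q',\rho)\cap\Omega$ with $\rho\sim r/M$ (both poles lie at distance $\gtrsim r/M$ from $q'$), comparing values at $A'$ and at a corkscrew point of $\Delta(q',c\rho)$. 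With the comparison principle granted, either variant closes the argument; without it, the proof is incomplete at its central step.
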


%%%%%%%%%%%%%%%%%%%%%%%%%%%%%%%%%%%%%%%%%%%%%%%%%%%%%%%%%%%%%%%%%%%%%%%%%%%%%%%%%
%%%%%%%%%%%%%%%%%%%%%%%%%%%%%%%%%%%%%%%%%%%%%%%%%%%%%%%%%%%%%%%%%%%%%%%%%%%%%%%%%

Let us restate the definition of $\omega \in A_\infty(\sigma)$ and make a few remarks that will become useful later.
\begin{defn}\label{def:Ainfty}
	We say the harmonic measure $\omega$ is of class $A_\infty$ with respect to the surface measure $\sigma = \mathcal{H}^d|_{\Gamma}$, or simply $\omega \in A_\infty(\sigma)$, if for any $\epsilon>0$, there exists $\delta=\delta(\epsilon)>0$ such that for any surface ball $\Delta$, any surface ball $\Delta' \subset \Delta$ and any Borel set $E\subset \Delta'$, we have
	\begin{equation}\label{eq:defAinfty}
		\frac{\sigma(E)}{\sigma(\Delta')} <\delta \implies \dfrac{\omega^A(E)}{\omega^A(\Delta')} <\epsilon.
	\end{equation}
	Here $A = A_{\Delta} $ is a corkscrew point for $\Delta$ (see Lemma \ref{lm:cksc}).
\end{defn}
\begin{remark}\label{rmk:defAinfty}
	\begin{enumerate}[(i)]
		\item The reader may recall that the standard definition for $A_\infty$ is that the harmonic measure with a fixed pole, i.e. $\omega^{X_0}$, satisfies \eqref{eq:defAinfty}. For unbounded boundary $\Gamma$ though, the standard definition needs to be replaced by its scale-invariant analogue, which is Definition \eqref{def:Ainfty}. In fact since $\Gamma$ is unbounded, it is impossible to have $\omega^{X_0}\in A_\infty(\sigma)$ with a fixed pole $X_0$, see the comments after Theorem 1.18 of \cite{Ainfty}.
		\item The above definition is symmetric: suppose $\omega\in A_{\infty}(\sigma)$, then we also have $\sigma \in A_{\infty}(\omega)$ (in a scale-invariant sense), i.e., the smallness of $\omega^A(E)/\omega^A(\Delta')$ implies the smallness of $\sigma(E)/\sigma(\Delta')$.
		\item In particular, the assumption \eqref{eq:defAinfty} implies that $\omega^A \ll \sigma$ when restricted to $\Delta$. We denote the Radon-Nikodym derivative by $k^A = \frac{d\omega^A}{d \sigma}$. Since both $\omega^A$ and $\sigma$ are Radon measures, we have
\begin{equation}\label{eq:kernelbydensity}
	k^A(q) = \lim_{\Delta'=\Delta(q,r) \atop{r\to 0}} \frac{\omega^A(\Delta')}{\sigma(\Delta')}, \quad \text{ for } \sigma \text{-a.e. } q\in \Delta.
\end{equation}
 Moreover since $\sigma$ is doubling, by standard harmonic analysis techniques (see \cite{weight} for example for the proof) \eqref{eq:defAinfty} implies that $k^A$ satisfies a reverse H\"older inequality: there are constants $r_0>1, C>0$ such that for all $r\in (1,r_0)$,
			\begin{equation}\label{eq:RHkernel}
				\left( \fint_{\Delta} \left|k^{A} \right|^{r} d\sigma \right)^{\frac{1}{r}} \leq C \fint_\Delta k^A d\sigma.
			\end{equation} 
			The constants $r_0$ and $C$ only depend on the constants characterizing the $A_{\infty}$ property \eqref{eq:defAinfty}; in particular, they are independent of $\Delta$ and $ A$.
	\end{enumerate}
\end{remark}
			
Recall that one of our main goals is to prove Theorem \ref{thm:main}, which states the equivalence between $\omega \in A_\infty(\sigma)$ and the BMO solvability of the Dirichlet problem. We make a few preliminary remarks.

Note that $(\pO, \sigma)$ is a space of homogeneous type. By John-Nirenberg inequality for space of homogeneous type, we may also use any $L^p$ norm ($1\leq p<\infty$) in the definition \eqref{eq:defBMO}, and the resulting BMO norms are all equivalent. See \cite{SHT} and \cite{JN}. Also it is easy to see that if $f\in L^\infty(\pO)$, then $f$ is a BMO function with $\|f\|_{BMO} \leq \sqrt{2} \|f\|_{L^\infty}$.

We observe that the Carleson measure norm of $|\nabla u|^2\delta(X) dm(X)$ is in some sense equivalent to the integral of the truncated square function.
Suppose $\Delta=\Delta(q_0,r)$ is an arbitrary surface ball. For any $X\in T(\Delta)$, we define $\Delta^X=\{q\in\pO: X\in \Gamma(q)\}$. Let $q_X\in\pO$ be a point such that $|X-q_X|=\delta(X)$. Then 
\begin{equation}
	\Delta(q_X, \alpha\delta(X))\subset \Delta^X\subset \Delta(q_X, (\alpha+2)\delta(X)). \label{DeltaX}
\end{equation} 
Since $\Gamma$ is $d$-Ahlfors regular, \eqref{DeltaX} implies $\sigma(\Delta^X) \approx \delta(X)^{d}$. Thus
\begin{align}
 	\iint_{T(\Delta)} \Carl{u} dm(X) & \approx \iint_{T(\Delta)} |\nabla u|^2 \delta(X)^{1-d} \sigma(\Delta^X ) dm(X) \nonumber \\
 	& =\iint_{T(\Delta)} |\nabla u|^2 \delta(X)^{1-d} \int_{ \Delta^X} d\sigma(q) dm(X). \label{eq:CarlesonSquarepre}
\end{align} 
Changing the order of integration, on one hand we get an upper bound
\begin{align}
  \iint_{T(\Delta)} |\nabla u|^2 \delta(X)^{1-d} \int_{\Delta^X} d\sigma(q) dm(X) & \leq \int_{|q-q_0|<(\alpha+2)r}\iint_{\Gamma_{(\alpha+1) r}(q)} |\nabla u|^2 \delta(X)^{1-d} dm(X)d\sigma \nonumber \\
  & \leq \int_{(\alpha+2)\Delta}|S_{(\alpha+1) r}u|^2d\sigma. \label{eq:CarlesonSquareU}
\end{align}
On the other hand, we get a lower bound
\begin{align}
  	\iint_{T(\Delta)} |\nabla u|^2 \delta(X)^{1-d} \int_{ \Delta^X} d\sigma(q) dm(X) & \geq \int_{|q-q_0|<r/2}\iint_{\Gamma_{r/2}(q)} |\nabla u|^2 \delta(X)^{1-d} dm(X)d\sigma \nonumber \\
  & \geq \int_{\frac{1}{2} \Delta} |S_{r/2} u|^2 d\sigma. \label{eq:CarlesonSquareL}
\end{align}
Therefore for any $q_0\in\pO$,
\begin{equation}
	\sup_{\substack{\Delta=\Delta(q_0,s) \\ s>0}} \frac{1}{\sigma(\Delta)} \iint_{T(\Delta)} \Carl{u} dm(X) \approx \sup_{\substack{\Delta=\Delta(q_0,r) \\ r>0}} \frac{1}{\sigma(\Delta)} \int_{\Delta} |S_{r} u|^2 d\sigma.
\end{equation} 

%%%%%%%%%%%%%%%%%%%%%%%%%%%%%%%%%%%%%%%%%%%%%%%%%%%%%%%%%%%%%%%%%%%%%%%%%%%%%%%%%%%%%%%%%%%
%%%%%%%%%%%%%%%%%%%%%%%%%%%%%%%%%%%%%%%%%%%%%%%%%%%%%%%%%%%%%%%%%%%%%%%%%%%%%%%%%%%%%%%%%%%

\section{Bound of the square function by the non-tangential maximal function}\label{sect:SlessN}

The goal of this section is to prove:

\begin{theorem}\label{thm:SlessthanN}
	Let $\Gamma$ be a $d$-Ahlfors regular set in $\RR^n$ with an integer $d\leq n-1$, and let $\omega$ be the harmonic measure of the domain $\Omega=	\RR^n \setminus \Gamma$.
	 Suppose $\omega\in A_{\infty}(\sigma)$, then
	\begin{equation}\label{eq:SlessthanN}
		\|Su\|_{L^p(\sigma)} \leq C\|Nu\|_{L^p(\sigma)}
	\end{equation}
	 for any $1\leq p<\infty$ and any solution $u\in W_r(\Omega)$ to $Lu=0$ such that the right hand side is finite. Here the constant $C>0$ depends on the allowable parameters $d, n, C_0, C_1$, the aperture $\alpha$ and the $A_\infty$ constant(s).
\end{theorem}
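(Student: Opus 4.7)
The plan is to derive \eqref{eq:SlessthanN} from the $\omega$-good-$\lambda$ inequality of Proposition \ref{lm:goodlambdaomega0} by upgrading it to a $\sigma$-good-$\lambda$ inequality via the $A_\infty$ hypothesis, and then integrating in $\lambda$ in the classical Dahlberg--Jerison--Kenig style. Fix $\lambda>0$ and consider $F_\lambda := \{q\in\Gamma:S'u(q)>\lambda\}$, where $S'$ has the intermediate aperture $\alpha_1$ featured in Proposition \ref{lm:goodlambdaomega0}. Using the dyadic system of Lemma \ref{lm:dcAR}, extract a family $\{Q_j\}$ of maximal dyadic cubes contained in $F_\lambda$, so that $F_\lambda=\bigsqcup_j Q_j$ and for each $j$ the dyadic parent of $Q_j$ meets $\Gamma\setminus F_\lambda$, producing a point $q_1^{(j)}\in\Gamma\setminus F_\lambda$ (hence $S'u(q_1^{(j)})\leq\lambda$) with $|q_1^{(j)}-q|\leq C_2\diam Q_j$ for every $q\in Q_j$. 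Since the aperture relation $\alpha<\alpha_1$ gives $Su\leq S'u$ pointwise, the set $E_{\lambda,\delta}:=\{Su>2\lambda,\,Nu\leq\delta\lambda\}$ is contained in $F_\lambda$, and Proposition \ref{lm:goodlambdaomega0} applied to each $Q_j$ with a pole $X_{Q_j}$ chosen outside $B(x_{Q_j},2C_3\ell(Q_j))$ yields $\omega^{X_{Q_j}}(E_{\lambda,\delta}\cap Q_j)\leq C\delta^2\,\omega^{X_{Q_j}}(Q_j)$.

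The transfer from $\omega$ to $\sigma$ is the heart of the argument and the only place where $\omega\in A_\infty(\sigma)$ is invoked. For each cube $Q_j$, pick a surface ball $\Delta_j\supset Q_j$ of radius comparable to $\ell(Q_j)$ and let $A_j$ be a corkscrew point for $\Delta_j$. Provided $X_{Q_j}$ is chosen sufficiently far from $\Delta_j$, the change-of-pole lemma (Lemma \ref{lm:cop}) converts the $\omega^{X_{Q_j}}$-estimate into
\[
\frac{\omega^{A_j}(E_{\lambda,\delta}\cap Q_j)}{\omega^{A_j}(\Delta_j)} \leq C\delta^2.
\]
The symmetric formulation of $A_\infty$ in Remark \ref{rmk:defAinfty}(iii), equivalently the reverse-Hölder inequality \eqref{eq:RHkernel} on the Poisson kernel $k^{A_j}=d\omega^{A_j}/d\sigma$, turns this into $\sigma(E_{\lambda,\delta}\cap Q_j)/\sigma(Q_j)\leq C\delta^{2/r'}$ with $r'=r/(r-1)$, uniformly in $j$. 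Summing over the disjoint $Q_j$'s yields the desired $\sigma$-good-$\lambda$ inequality
\[
\sigma\bigl(\{Su>2\lambda,\,Nu\leq\delta\lambda\}\bigr) \leq C\delta^{2/r'}\,\sigma\bigl(\{S'u>\lambda\}\bigr).
\]

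Integrating against $p\lambda^{p-1}\,d\lambda$ and bounding $\sigma(\{Su>2\lambda\})\leq\sigma(\{Nu>\delta\lambda\})+\sigma(\{Su>2\lambda,\,Nu\leq\delta\lambda\})$ gives
\[
\|Su\|_{L^p(\sigma)}^p \leq C\delta^{-p}\|Nu\|_{L^p(\sigma)}^p + C\delta^{2/r'}\|S'u\|_{L^p(\sigma)}^p.
\]
The standard aperture equivalence for square functions on spaces of homogeneous type (or iteration through a cascade of intermediate apertures) supplies $\|S'u\|_{L^p(\sigma)}\lesssim\|Su\|_{L^p(\sigma)}$, so choosing $\delta$ small enough absorbs the last term and yields \eqref{eq:SlessthanN}. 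To justify the absorption when $\|Su\|_{L^p(\sigma)}$ is not a priori known to be finite, one first runs the entire argument for a suitably truncated square function (manifestly $L^p$-bounded via Caccioppoli on the truncation region), obtains a uniform bound, and then passes to the limit by monotone convergence.

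The main obstacle is the $\omega$-to-$\sigma$ transfer: the change-of-pole lemma requires the pole $X_{Q_j}$ to sit at a controlled distance beyond $\Delta_j$, so $\{\Delta_j\}$ and $\{X_{Q_j}\}$ must be chosen uniformly across all Whitney cubes in order for the reverse-Hölder constants, and hence the exponent $2/r'$ gain, to be independent of $j$. The secondary technical point is the truncation/absorption step, which is routine but hinges on the a priori finiteness of the $L^p$-norm of the truncated square function.
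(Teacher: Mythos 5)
Your proposal is correct and follows essentially the same route as the paper: the good-$\lambda$ inequality for $\omega$ from Proposition \ref{lm:goodlambdaomega0} applied to the maximal cubes of a stopping-time decomposition of $\{S'u>\lambda\}$ (using the point $q_1$ supplied by the parent cube), transfer to $\sigma$ via the symmetry of the $A_\infty$ condition, integration in $\lambda$ with absorption through the aperture-equivalence of square functions, and a truncation argument to secure the a priori finiteness of $\|S'u\|_{L^p(\sigma)}$. The only details you gloss over, which the paper makes explicit, are that the level set $\{S'u>\lambda\}$ must be shown to be open (Lemma \ref{lm:opensf}) and of finite measure so that the maximal dyadic cubes exist and partition it.
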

It suffices to prove \eqref{eq:SlessthanN} for non-negative harmonic functions $u$, because otherwise, we just split $u= u_+ - u_-$ and use the linearity of $L$ and the triangle inequality.
Before starting to prove the theorem we need to recall some notation and preliminary results.

\begin{lemma}[dyadic cubes for Ahlfors regular set, \cite{DS1, DS2, Ch}]\label{lm:dcAR}
	Let $\Gamma\subset \RR^n$ be a $d$-Ahlfors regular set. Then there exist constants $a_0, A_1, \gamma>0$, depending only on $d, n$ and $C_0$, such that for each $k\in\mathbb{Z}$, there is a collection of Borel sets (``dyadic cubes'')
	\[ \mathbb{D}_k : = \{Q_j^k \subset \Gamma: j\in \mathscr{J}_k \}, \]
	where $\mathscr{J}_k$ denotes some index set depending on $k$, satisfying the following properties.
	\begin{enumerate}[(i)]
		\item $\Gamma = \bigcup_{j\in\mathscr{J}_k} Q_j^k$ for each $k\in\mathbb{Z}$.
		\item If $m\geq k$ then either $Q_i^m \subset Q_j^k$ or $Q_i^m \cap Q_j^k = \emptyset$.
		\item For each pair $(j,k)$ and each $m<k$, there is a unique $i\in\mathscr{J}_m$ such that $Q_j^k \subset Q_i^m$.
		\item $\diam Q_j^k \leq A_1 2^{-k}$.
		\item Each $Q_j^k$ contains some surface ball $\Delta(x_j^k, a_0 2^{-k}) := B(x_j^k, a_0 2^{-k}) \cap\Gamma$.
		\item $\mathcal{H}^d \left( \left\{ q\in Q_j^k: \dist(q, \Gamma\setminus Q_j^k) \leq \rho 2^{-k} \right\} \right) \leq A_1 \rho^{\gamma} \mathcal{H}^d (Q_j^k)$, for all $(j, k)$ and all $\rho\in (0,a_0)$.
	\end{enumerate}
	We shall denote by $\mathbb{D} = \mathbb{D}(\Gamma)$ the collection of all relevant $Q_j^k$, i.e.
	\[ \DD = \bigcup_k \DD_k. \]
\end{lemma}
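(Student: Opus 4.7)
My plan is to follow the classical Christ–David–Semmes construction, which I will only sketch. First, at each scale $k \in \mathbb{Z}$, choose a maximal $a_0 2^{-k}$-separated set of points $\{x_j^k\}_{j \in \mathscr{J}_k} \subset \Gamma$, where $a_0 \in (0,1)$ is a small parameter to be fixed later. Maximality gives $\Gamma \subset \bigcup_j B(x_j^k, a_0 2^{-k})$, and separation makes the balls $\{B(x_j^k, \tfrac{a_0}{2} 2^{-k})\}_j$ pairwise disjoint; combined with $d$-Ahlfors regularity, this yields $\sigma(\Delta(x_j^k, a_0 2^{-k})) \sim (a_0 2^{-k})^d$ uniformly in $j$, which already positions us to obtain property (v).

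Second, I would impose a tree structure on the centers. For $j \in \mathscr{J}_k$, let the parent $p(j,k) \in \mathscr{J}_{k-1}$ be the index minimizing $|x_j^k - x_{p(j,k)}^{k-1}|$ (with a deterministic tie-breaking rule); by the covering property this distance is at most $2a_0 2^{-(k-1)}$. Iterating gives an $m$-th ancestor $p_m(j,k) \in \mathscr{J}_m$ for each $m \leq k$. Now partition $\Gamma$ at scale $k$ into Voronoi-type cells relative to $\{x_j^k\}_j$ (with the same tie-breaking rule), and declare $q \in Q_i^m$ iff the $m$-th ancestor of the scale-$k$ center nearest to $q$ equals $i$ for every sufficiently large $k$. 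This makes $\mathbb{D}_k$ a partition of $\Gamma$ (property (i)), and the tree structure built into the definition yields the nested and unique-ancestor properties (ii) and (iii) automatically.

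Properties (iv) and (v) then follow from the net estimates: $Q_j^k$ contains the Voronoi cell of $x_j^k$, which in turn contains $\Delta(x_j^k, \tfrac{a_0}{2} 2^{-k})$, and for any $q$ in a descendant cell of $x_j^k$ the telescoping bound $|q - x_j^k| \leq \sum_{\ell \geq k} C 2^{-\ell} \leq A_1 2^{-k}$ gives the diameter estimate. The main obstacle, and the only really delicate point, is the small-boundary property (vi). A point $q \in Q_j^k$ within distance $\rho 2^{-k}$ of $\Gamma \setminus Q_j^k$ must, at some intermediate scale $\ell \geq k$, lie within distance $\sim \rho 2^{-k}$ of a Voronoi wall separating two sibling descendants of $x_j^k$. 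I would cover this thin layer by balls of radius $\sim \rho 2^{-k}$, control their total $\sigma$-measure via Ahlfors regularity, and sum a geometric series across refinement scales whose common ratio is a positive power of $\rho$ (coming from the fact that the admissible near-wall region shrinks by a definite factor at each refinement). This yields the claimed bound $A_1 \rho^\gamma \sigma(Q_j^k)$ with $\gamma > 0$ depending only on $d, n, C_0$. Rather than reproduce the full calculation, which is standard and somewhat intricate, I would invoke \cite{Ch} (see also \cite{DS1, DS2} for the variant tailored to Ahlfors regular sets).
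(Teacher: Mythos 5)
The paper does not prove this lemma at all; it is quoted directly from \cite{DS1, DS2, Ch}, and your sketch is precisely the standard Christ--David--Semmes construction from those references (separated nets, tree of parents, Voronoi-type cells, geometric-series argument for the small-boundary property), with the delicate step (vi) deferred to \cite{Ch} exactly as the paper defers the whole statement. This is consistent with the paper's treatment, so nothing further is needed.
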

\begin{remark}
	\begin{enumerate}[i)]
		\item For a dyadic cube $Q\in\DD$, we let $k(Q)$ denote the ``dyadic generation'' to which $Q$ belongs, i.e. we set $k(Q)=k$ if $Q\in\DD_k$. We also set its ``length'' $\ell(Q) = 2^{-k(Q)}$. Thus $\ell(Q) = 2^{-k(Q)} \sim \diam Q$.
		\item Properties (iv) and (v) imply that for each cube $Q\in\DD$, there is a point $x_Q \in \Gamma$ such that 
			\begin{equation}\label{nesteddcube}
				\Delta(x_Q, r_Q) \subset Q \subset \Delta(x_Q, C_2 r_Q),
			\end{equation}
			where $r_Q = a_0 2^{-k(Q)}\sim \diam Q$ and $C_2 = A_1/a_0$.
	\end{enumerate}	
\end{remark}

Now we define sawtooth domains following the definitions of Hofmann and Martell, see for example \cite{HM}, \cite{HMM} and \cite{HMT}.
Since $\Omega$ is an open set, it has a Whitney decomposition, that is, a collection of closed ``Whitney'' boxes in $\Omega$, denoted by $\WW = \WW(\Omega)$, which form a covering of $\Omega$ with pairwise non-overlapping interiors and satisfy
\begin{equation}\label{Wbox}
	4\diam I \leq \dist(4I, \pO) \leq \dist(I,\pO) \leq 40 \diam I, \quad \text{ for any } I\in\WW,
\end{equation}
and also 
\begin{equation}\label{ngWbox}
	\frac{1}{4} \diam I_1 \leq \diam I_2 \leq 4\diam I_1
\end{equation}
whenever $I_1$ and $I_2$ in $\WW$ touch. (See \cite{SteinSI} for reference.) Let $X_I$ denote the center of $I$ and $\ell(I) $ the side length of $I$, then $\diam I \sim \ell(I)$. We also write $k(I) = k$ if $\ell(I) = 2^{-k}$. 

Let $\mathbb{D}$ be a collection of dyadic cubes for the Ahlfors regular set $\Gamma$, as in Lemma \ref{lm:dcAR}.
For any dyadic cube $Q\in\DD$, pick two parameters $\eta\ll 1$ and $K\gg 1$, and define
\begin{equation}\label{def:WQ0}
	\WW_Q^0 : = \{I\in\WW: \eta^{\frac{1}{4}} \ell(Q) \leq \ell(I) \leq K^{\frac{1}{2}} \ell(Q), \dist(I,Q) \leq K^{\frac{1}{2}} \ell(Q)\}.
\end{equation} 
Let $X_Q$ denote a corkscrew point for the surface ball $\Delta(x_Q, r_Q/2)$. We can guarantee that $X_Q $ is in some $I\in\WW_Q^0$ provided we choose $\eta$ small enough and $K$ large enough. For each $I\in\WW_Q^0$, by Lemma \ref{lm:Hcc} and the discussions after that, there is a Harnack chain connecting $X_I$ to $X_Q$, we call it $\mathcal{H}_I$. By the definition of $\WW_Q^0$ we may construct this Harnack chain so that it consists of a bounded number of balls (depending on the values of $\eta, K$), and stays a distance at least $c\eta^{\frac{n-1}{4(n-1-d)}}\ell(Q)$ away from $\pO$ (see \eqref{eq:Hclb}). We let $\WW_Q$ denote the set of all $J\in\WW$ which meet at least one of the Harnack chains $\mathcal{H}_I$, with $I\in \WW_Q^0$, i.e.
\begin{equation}
	\WW_Q := \{J\in\WW: \text{there exists } I\in\WW_Q^0 \text{ for which } \mathcal{H}_I \cap J \neq \emptyset\}.
\end{equation}
Clearly $\WW_Q^0 \subset \WW_Q$. Besides, it follows from the construction of the augmented collections $\WW_Q$ and the properties of the Harnack chains (in particular \eqref{eq:Hclb} and \eqref{eq:Hcub}) that there are uniform constants $c$ and $C$ such that
\begin{equation}\label{def:WQ}
	c\eta^{\frac{n-1}{4(n-1-d)} } \ell(Q) \leq \ell(I) \leq CK^{\frac{1}{2}} \ell(Q), \quad \dist(I,Q) \leq CK^{\frac{1}{2}} \ell(Q) 
\end{equation}
for any $I\in \WW_Q$. In particular once $\eta, K$ is fixed, for any $Q\in \DD$ the cardinality of $\WW_Q$ is uniformly bounded, which we denote by $N_0$.

Next we choose a small parameter $\theta\in (0,1)$ so that for any $I\in\WW$, the concentric dilation $I^* = (1+\theta)I$ still satisfies the Whitney property
\begin{equation}\label{eq:Wboxdl}
	\diam I \sim \diam I^* \sim \dist(I^*,\pO) \sim \dist(I,\pO).
\end{equation}
Moreover by taking $\theta$ small enough we can guarantee that $\dist(I^*, J^*) \sim \dist(I,J)$ for every $I, J\in\WW$, $I^*$ meets $J^*$ if and only if $\partial I$ meets $\partial J$ and that $\frac{1}{2} J\cap I^*=\emptyset$ for any distinct $I, J\in\WW$. In what follows we will need to work with further dilations $I^{**} = (1+2\theta) I$ or $I^{***} = (1+4\theta)I$ etc.. (We may need to take $\theta$ even smaller to make sure the above properties also hold for $I^{**}, I^{***}$ etc..)
Given an arbitrary $Q\in\DD$, we may define an associated Whitney region $U_Q$, $U_Q^*$ as follows
\begin{equation}
	U_Q: = \bigcup_{I\in\WW_Q} I^*, \quad U_Q^*: = \bigcup_{I\in\WW_Q} I^{**}.
\end{equation}

Let $\DD_Q = \{Q'\in\DD: Q' \subset Q\}$. For any $Q\in\DD$ and any family $\mathcal{F}= \{Q_j\}$ of disjoint cubes in $ \DD_Q\setminus\{Q\}$, we define the local discretized sawtooth relative to $\FF$ by
\begin{equation}
	\DD_{\FF,Q} : = \DD_Q \setminus \bigcup_{Q_j\in\FF} \DD_{Q_j}.
\end{equation} 
We also define the local sawtooth domain relative to $\FF$ by
\begin{equation}\label{def:OFQ}
	\Omega_{\FF,Q}:= \interior \left( \bigcup_{Q'\in\DD_{\FF,Q}} U_{Q'}\right), \quad \Omega_{\FF,Q}^*:= \interior \left( \bigcup_{Q'\in\DD_{\FF,Q}} U_{Q'}^*\right)
\end{equation}
For convenience we set
\begin{equation}\label{def:WFQ}
	\WW_{\FF,Q} := \bigcup_{Q'\in\DD_{\FF,Q}} \WW_{Q'},
\end{equation}
so that in particular, we may write
\begin{equation}
	\Omega_{\FF,Q} = \interior \left(\bigcup_{I\in\WW_{\FF,Q}} I^* \right), \quad \Omega_{\FF,Q}^* = \interior \left(\bigcup_{I\in\WW_{\FF,Q}} I^{**} \right).
\end{equation}
We will need further fattened sawtooth domain $\Omega_{\FF,Q}^{**}$ etc. whose definitions follow the same lines as above. We remark that by \eqref{def:WQ}, there is a constant $C_3$ depending on $K, \theta$ such that
\begin{equation}\label{stinball}
	\Omega_{\mathcal{F},Q} \subset B(x_Q, C_3 \ell(Q)) \cap \Omega
\end{equation}
for any $Q\in\DD$ and collection of maximal cubes $\mathcal{F}$, where $x_Q$ is the ``center'' of $Q$ as in \eqref{nesteddcube}.

Finally, to work with sawtooth domains, it is more natural to use a discrete dyadic version of the approach region rather than the standard non-tangential cone defined in \eqref{def:NTC}: for every $q\in\pO$, we define the dyadic non-tangential cones as
\begin{equation}\label{def:dyaNTC}
	\Gamma_{d}(q) = \bigcup_{Q\in\DD: Q\ni q} U_Q, \quad  \widehat{\Gamma}_d(q) = \bigcup_{Q\in\DD: Q\ni q} U_Q^{***}
\end{equation}
where we use $\widehat{\Gamma}_d$ to denote a cone with bigger ``aperture'' or fattened region;
we also define the local dyadic non-tangential cones as
\begin{equation}
	\Gamma_d^Q(q) = \bigcup_{Q'\in\DD_Q : Q'\ni q} U_{Q'}, \quad \widehat{\Gamma}_d^Q (q) = \bigcup_{Q'\in\DD_Q : Q'\ni q} U_{Q'}^{***}.
\end{equation}

We claim that given an aperture $\alpha>0$, there exists $K$ (in the definition \eqref{def:WQ0}) sufficiently large such that the standard non-tangential cone $\Gamma^{\alpha}(q) \subset \Gamma_d(q)$ for all $q\in\pO$; and vice versa, for fixed values of $\eta, K$ and the dilation constant $\theta$, there exists $\alpha_1>0$ such that the dyadic cone $\Gamma_d(q) \subset \Gamma^{\alpha_1}(q)$ for all $q\in\pO$.
For any $X\in \Gamma^\alpha(q)$, let $I$ be a Whitney box such that $X\in I^*$. By \eqref{Wbox} we know $ \ell(I) \sim \delta(X) $.
Let $Q$ be a cube containing $q$ with length $\ell(Q) = \ell(I)$. Then
\begin{equation}\label{eq:dyacNTC}
	\dist(I,Q) \leq |X-q| < (1+\alpha) \delta(X) \leq C (1+\alpha) \ell(I) = C(1+\alpha)\ell(Q). 
\end{equation} 
If $K$ is sufficiently large so that $K^{\frac{1}{2}} \geq C(1+\alpha)$, then \eqref{eq:dyacNTC} and $\ell(I) = \ell(Q)$ implies that $I\in \WW_{Q}^0$. By the definition \eqref{def:dyaNTC} it follows that $X\in \Gamma_d(q)$.
In particular, since $\Gamma^\alpha(q)$ is open, we also have $\Gamma^{\alpha}(q) \subset \interior \Gamma_d(q)$.
On the other hand, suppose $X\in \Gamma_d(q)$, by definition \eqref{def:dyaNTC} $X$ is contained in some $I^*= (1+\theta)I$ for a Whitney box $I\in \WW_Q$ and dyadic cube $Q$ containing $q$. Then by \eqref{def:WQ},
\[ |X-q| \leq \diam I^* + \dist(I,Q) + \diam Q \leq C(K,\theta) \ell(Q), \]
\[ \delta(X) \sim \ell(I) \geq C(\eta) \ell(Q). \]
Therefore there exists $\alpha_1$ sufficiently large, depending on the values of $\eta, K, \theta$, such that
\[ |X-q| < (1+\alpha_1)\delta(X), \]
i.e. $X\in \Gamma^{\alpha_1}(q)$.
 We summarize that now we have
\begin{equation}\label{dntc}
	\Gamma^{\alpha}(q) \subset \interior \Gamma_d(q) \subset  \Gamma_d(q) \subset \Gamma^{\alpha_1}(q), \quad \text{ for all }q\in\pO.
\end{equation}
Clearly $\alpha_1>\alpha$. Moreover, there exists $\beta>\alpha_1$ depending on $\eta, K, \theta$ such that the fattened dyadic non-tangential cone
\begin{equation}\label{fdntc}
	\widehat{\Gamma}_d(q) \subset \Gamma^{\beta}(q) \quad \text{ for all }q\in\pO.
\end{equation}
From now on we fix the values of $ \eta, K, \theta$ and $\beta> \alpha_1>\alpha>0$.

Let $F= Q\setminus \bigcup_{Q_j\in\FF} Q_j$ and suppose it is not empty. We claim that
\begin{equation}\label{conesawtooth}
	\interior \left(\bigcup_{q\in F} \Gamma^Q_d(q) \right) \subset \Omega_{\FF, Q} \subset \overline{\Omega_{\FF, Q}} \subset \Omega_{\FF,Q}^{***} \subset \bigcup_{q\in F} \widehat{\Gamma}^Q_d(q).
\end{equation}
In fact, for any $q\in F$, it is clear that $q$ is in some $Q'\in \DD_{\FF,Q}$; and by \eqref{def:OFQ}, the definition of $\Omega_{\FF,Q}$, we have the first incluement. On the other hand any $X\in \Omega_{\FF, Q}^{***}$ belongs to some $U_{Q'}^{***}$ with $Q'\in \DD_{\FF,Q}$, and thus $X\in \widehat{\Gamma}_d^Q(q)$ for arbitrary $q\in Q'$. By the definition of $\DD_{\FF,Q}$, we know $Q'\cap F\neq \emptyset$, so by taking $q\in Q'\cap F$ we get $X\in \bigcup_{q\in F} \widehat{\Gamma}^Q_d(q)$.

For $N$ sufficiently large, we augment the collection of maximal cubes $\FF$ by adding all dyadic cubes in $\mathbb{D}$ of size smaller than or equal to $2^{-N}\ell(Q)$, and we denote by $\FF^N$ a collection consisting of all maximal cubes of the above augmented collection. In particular $Q'\in \DD_{\FF^N,Q}$ if and only if $Q'\in\DD_{\FF,Q}$ and $\ell(Q') > 2^{-N}\ell(Q)$. By doing this we guarantee that the sawtooth domain $\Omega_{\FF^N,Q}$ is compactly contained in $\Omega$ (roughly speaking $\dist(\Omega_{\FF^N,Q}, \Omega^c) \sim 2^{-N}\ell(Q)$).
Similar to Lemma 4.44 of \cite{HMT} we can construct a smooth cutoff function of $\Omega_{\FF^N,Q}$:

\begin{lemma}[cut-off function of sawtooth domain]\label{lm:stcutoff}
	There exists $\psi_N\in C_0^{\infty}(\RR^n)$ such that
	\begin{enumerate}[(i)]
		\item $\chi_{\Omega_{\FF^N,Q}^{*}} \lesssim \psi_N \leq \chi_{\Omega_{\FF^N,Q}^{**}}$;
		\item $\sup_{X\in\Omega} |\nabla \psi_N (X)|\delta(X) \lesssim 1$;
		\item We abbreviate $\mathcal{W}_{\mathcal{F}^N,Q}  $ as $\mathcal{W}_N$ and set $\Sigma = \partial\Omega_{\FF^N,Q}^*$,
			 \[ \mathcal{W}_N^{\Sigma} = \{I\in\mathcal{W}_N: \text{there exists } J\in \mathcal{W} \setminus \mathcal{W}_N \text{ with } \partial I \cap \partial J \neq \emptyset\}. \] Then
			\begin{equation}\label{eq:intsawtooth}
				\nabla \psi_N \equiv 0 \text{ in } \bigcup_{I\in \WN \setminus \pWN} I^{***}.
			\end{equation}
		\item For each $I\in\WW_{N}$, let $Q_I$ denote a cube in $\DD_{\FF^N,Q}$ such that $I\in \WW_{Q_I}$. Suppose $\omega$ is the harmonic measure with pole $X_0$ and $X_0$ satisfies $\dist(X_0, \Omega_{\FF^N,Q}^{***}) \gtrsim \ell(Q)$. Then
			\begin{equation}\label{hmalmostdisjoint}
				\sum_{I\in\mathcal{W}_N^{\Sigma}} \omega(Q_I) \lesssim \omega(Q)
			\end{equation}
			with a constant depending on $\eta, K, a_0, C_1, d$ and the Ahlfors regular constant of $\Gamma$.
	\end{enumerate}
\end{lemma}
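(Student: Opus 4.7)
The plan is to construct $\psi_N$ via standard Whitney smoothing and deduce (i)--(iii) directly, leaving (iv) as the main technical point.

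For each $I \in \WN$ choose $\phi_I \in C_0^\infty(I^{**})$ with $0 \leq \phi_I \leq 1$, $\phi_I \equiv 1$ on $I^*$, and $|\nabla \phi_I| \lesssim 1/\ell(I)$. Because of the two-sided size bound \eqref{def:WQ} and the truncation at scale $2^{-N}\ell(Q)$, the family $\WN$ is finite, so one may set
\[
\psi_N(X) \;:=\; 1 - \prod_{I\in \WN}\bigl(1-\phi_I(X)\bigr).
\]
Then $\psi_N \equiv 1$ on $\bigcup_{I\in \WN} I^*$ and $\supp \psi_N \subset \bigcup_{I\in \WN} I^{**}$, which gives (i) after recalling the definitions of $\Omega_{\FF^N,Q}^{*}$ and $\Omega_{\FF^N,Q}^{**}$. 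Differentiating the product, only the $\phi_I$ with $X\in I^{**}$ contribute; by Whitney adjacency only $O(1)$ such boxes exist, and on each of them $\ell(I)\sim \delta(X)$, hence (ii).

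For (iii), fix $I\in \WN\setminus \pWN$, so every Whitney box $J$ touching $\overline I$ lies in $\WN$. Provided the dilation parameter $\theta$ was chosen sufficiently small at the outset, any point $X\in I^{***}$ lies either in $I\subset I^*$ or in a Whitney neighbor $J\subset J^*$, and in fact has an open neighborhood remaining in $\bigcup_{J'\in \WN}J'^*$. On such a neighborhood $\psi_N\equiv 1$, so $\nabla\psi_N(X)=0$.

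The main obstacle is (iv). The idea is to assign to each $I\in \pWN$ a ``witness'' cube $R_I\in \FF^N\cup\{Q\}$ such that $Q_I$ and $R_I$ are comparable in size and location, and each witness is shared by only boundedly many $I$. Concretely: $I\in \pWN$ admits a Whitney neighbor $J\notin \WN$, and $J$ violates \eqref{def:WQ} either through the top size/distance bound, forcing $\ell(I)\sim \ell(Q)$ and leaving only $O(1)$ such $I$ (take $R_I=Q$), or through the lower bound, which means $I$ sits next to one of the maximal cubes $Q_j\in \FF^N$ (take $R_I=Q_j$). Unwinding the definition of $\WW_{Q_I}$ and using the Harnack chain from $X_{Q_I}$ to $X_{R_I}$ built into the construction of $\WN$, one shows $\ell(R_I)\sim\ell(Q_I)$ and $\dist(Q_I,R_I)\lesssim\ell(Q_I)$, so $R_I$ is contained in a controlled surface-ball enlargement of $Q_I$ and vice versa. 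Because the pole satisfies $\dist(X_0,\Omega_{\FF^N,Q}^{***})\gtrsim \ell(Q)$, the doubling property \eqref{eq:doubling} applies uniformly and yields $\omega(Q_I)\sim \omega(R_I)$. Since only boundedly many $I\in \pWN$ share any given $R_I$ (they correspond to Whitney boxes adjacent to $R_I$, of bounded count by \eqref{def:WQ} and Whitney adjacency) and the cubes of $\FF^N$ are pairwise disjoint subsets of $Q$, we conclude
\[
\sum_{I\in \pWN}\omega(Q_I)\;\lesssim\; \omega(Q)\,+\sum_{Q_j\in \FF^N}\omega(Q_j)\;\lesssim\; \omega(Q).
\]
The delicate point is the comparability of $Q_I$ and $R_I$: this is where the geometric lemmas on Harnack chains in $\Omega$ (Lemma \ref{lm:Hcc} and Remark \ref{rmk:Hcc}) are crucial, since they are what forces the Whitney regions $\WW_{Q'}$ not to spread too far from the corresponding dyadic cube $Q'$.
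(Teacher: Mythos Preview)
Your construction of $\psi_N$ via $1-\prod_{I\in\WN}(1-\phi_I)$ differs from the paper's partition-of-unity approach but works equally well for (i)--(iii); the verifications go through essentially as you indicate.

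The genuine gap is in (iv), specifically the assertion that $\ell(R_I)\sim\ell(Q_I)$ when $R_I=Q_j\in\FF^N$, and consequently that only boundedly many $I\in\pWN$ share a given witness $Q_j$. Both claims are false. For $I\in\pWN$ with Whitney neighbor $J\notin\WN$, one can indeed find a dyadic cube $Q'_I$ with $J\in\WW_{Q'_I}$, $\ell(Q'_I)\sim\ell(I)$, and $Q'_I\notin\DD_{\FF^N,Q}$; but $Q'_I$ is typically a \emph{proper descendant} of some $Q_j\in\FF^N$, not $Q_j$ itself. The maximal cube $Q_j$ may be arbitrarily large compared to $\ell(I)$, and a single such $Q_j$ can witness boxes $I$ at every scale from $\ell(Q_j)$ down to the truncation scale $2^{-N}\ell(Q)$---unboundedly many as $N\to\infty$. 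The corresponding cubes $Q'_I$ all sit near the boundary of $Q_j$ but overlap heavily across scales, so $\sum_{I:\,R_I=Q_j}\omega(Q_I)$ has no obvious bound by $\omega(Q_j)$.

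The paper overcomes this with an iterative ``swap of cubes'' argument hinging on the thin-boundary property of dyadic cubes (Lemma~\ref{lm:dcAR}(vi)): at each block of $k_0$ scales the relevant $Q'_I$ lie in a thin annular layer of $Q_j$, and one trades their harmonic measure (via doubling) for that of a single ``central'' subcube $\widehat{Q}_j$ disjoint from that layer; iterating over successive scale blocks produces a family of pairwise disjoint central cubes contained in $Q_j$, whence $\sum_{I\in\Sigma_j}\omega(Q'_I)\lesssim\omega(Q_j)$. This mechanism is exactly the new difficulty in replacing $\sigma$ (for which the thin-boundary estimate yields a geometric series directly) by $\omega$ (which has no a priori decay on thin layers), as the paper itself flags in Remark~\ref{rmk:eqvhm}, and your outline does not engage with it.
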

\begin{remark}\label{rmk:eqvhm}
	\begin{enumerate}
		\item We remark that the construction of $\psi_N$ and the proof of its properties (i), (ii), (iii) are higher codimensional analogues of Lemma 4.44 of \cite{HMT}. However we prove (iv) instead of the second estimate in their (4.46), because we will need to prove a good-$\lambda$ inequality for the harmonic measure, instead of the surface measure. Since harmonic measure could have much worse decay properties than the surface measure, not to mention that $\pO$ and $\partial\Omega_{\FF^N,Q}$ are objects of different dimensions, proving (iv) requires a different argument.
		\item Note that in (iv), the choice of $Q_I$ may not be unique. Suppose both $Q_I, \widetilde{Q}_I$ are cubes in $\DD_{\FF^N,Q}$ such that $I\in \WW_{Q_I}$ and $I\in\WW_{\widetilde{Q}_I}$. By the construction of $\WW_{Q}$'s and in particular \eqref{def:WQ}, we know
	\begin{equation}
		\ell(Q_I) \sim \ell(I) \sim \ell(\widetilde{Q}_I), \quad \dist(Q_I, \widetilde{Q}_I) \lesssim \ell(Q_I)
	\end{equation}
	with constants depending on $\eta, K$. Since harmonic measure is doubling, we have
	\begin{equation}
		C_1 \omega(Q_I) \leq \omega(\widetilde{Q}_I) \leq C_2 \omega(Q_I)
	\end{equation}
	with constants only depending on the doubling constant and $\eta, K$. That is to say, for different choices of $Q_I$ the left hand side of \eqref{hmalmostdisjoint} differs at most by a constant multiple. But once we associate a cube $Q_I$ to $I$, the choice will be fixed.
	\end{enumerate}
\end{remark}

\begin{proof}
	The proof of (i) is a modification of the proof from \cite{HMT} in higher codimensions. We recall that given $I$, any closed dyadic cube in $\RR^n$, we set $I^{**} = (1+2\theta)I$ and $I^{***} = (1+4\theta)I$. Let us introduce $\widetilde{I^{**}}= (1+3\theta)I$ so that
	\begin{equation}
		I^{**} \subsetneq \interior \wI \subsetneq \wI \subset \interior I^{***}.
	\end{equation}
	Given $I_0 = [-\frac{1}{2},\frac{1}{2}]^n\subset \RR^n$, we fix $\phi_0\in C_0^\infty(\RR^n)$ such that $\chi_{I_0^{**}} \leq \phi_0\leq \chi_{\widetilde{I_0^{**}}}$ and $|\nabla \phi_0|\lesssim 1$, with the implicit constant depending on $\theta$. For every $I\in\WW$ we set $\phi_I= \phi_0\left((\cdot-X_I)/\ell(I)\right)$ where $X_I$ is the center of $I$, so that $\phi_I\in C_0^\infty(\RR^n)$, $\chi_{I^{**}}\leq \phi_I \leq \chi_{\wI}$ and $|\nabla \phi_I| \lesssim 1/\ell(I)$. Let $\Phi(X):= \sum_{I\in\WW} \phi_I(X)$ for every $X\in\Omega$. Since for each compact subset of $\Omega$, the previous sum has finitely many non-vanishing terms, we have $\Phi\in C^\infty_{loc}(\Omega)$. Also $0 \leq \Phi(X) \lesssim C_{\theta}$ since the family $\{\wI\}_{I\in\WW}$ has bounded overlap. Hence we can set $\Phi_I = \phi_I/\Phi$ and one can easily see that $\Phi_I\in C_0^\infty(\RR^n)$, $C_{\theta}^{-1} \chi_{I^{**}} \leq \Phi_I \leq \chi_{\wI}$ and $|\nabla \Phi_I|\lesssim 1/\ell(I)$. Recall the definition of $\WW_N = \WW_{\FF^N,Q}$ in \eqref{def:WFQ}, we set
	\begin{equation}
		\psi_N(X) = \sum_{I\in\WW_N} \Phi_I(X) = \dfrac{\sum_{I\in\WW_N} \phi_I(X)}{\sum_{I\in\WW} \phi_I(X)}, \quad X\in\Omega.
	\end{equation}
	We first note that the number of terms in the sum defining $\psi_N$ is bounded depending on $N$. Indeed if $Q'\in\DD_{\FF^N,Q}$ then $Q' \in\DD_Q$ and $2^{-N}\ell(Q) < \ell(Q')\leq \ell(Q)$, which implies $\DD_{\FF^N,Q}$ has finite cardinality with bounded depending only the Alhfors regular constant and $N$. Also by construction $\WW_Q$ has cardinality depending only in the allowable parameters $\eta, K$. Hence $\#\WW_N \leq C_N<\infty$. This and the fact that each $\Phi_I\in C_0^\infty(\RR^n)$ yield that $\psi_N\in C_0^\infty(\RR^n)$. Moreover
	\begin{equation}
		\supp\psi_N \subset \bigcup_{I\in\WW_N} \wI = \bigcup_{Q'\in\DD_{\FF^N,Q}}\bigcup_{I\in\WW_Q} \wI \subset \interior \left( \bigcup_{Q'\in\DD_{\FF^N,Q}} U_{Q'}^{**} \right) = \Omega_{\FF^N,Q}^{**}.
	\end{equation}
	This and the definition of $\psi_N$ immediately gives $\psi_N \leq \chi_{\Omega_{\FF^N,Q}^{**}}$. On the other hand if $X\in \Omega_{\FF^N,Q}^{*}$ then there exists $I\in \WW_N$ such that $X\in I^{**}$, in which case $\psi_N(X) \geq \Phi_I(X) \geq C_{\theta}^{-1}$. This completes the proof of (i).
	
	To obtain (ii) we note that for every $X\in\Omega$
	\begin{equation}
		|\nabla \psi_N(X)| \leq \sum_{I\in\WW_N} |\nabla \Phi_I(X)| \lesssim \sum_{I\in \WW} \frac{1}{\ell(I)} \chi_{\wI}(X) \lesssim \frac{1}{\delta(X)},
	\end{equation}
	where we have used that if $X\in\wI$ then $\ell(I) \sim \delta(I)$ and also that the family $\{\wI\}_{I\in\WW}$ has bounded overlap.
	
	Now we turn to (iii). Fix $I\in\WW_N\setminus \WW_N^{\Sigma}$ and $X\in I^{***}$, and set $\WW_X = \{J\in\WW: \phi_J(X) \neq 0\}$. We first note that $\WW_X \subset \WW_N$. Indeed if $\phi_J(X) \neq 0$ then $X\in \widetilde{J^{**}}$. Hence $X\in I^{***}\cap J^{***}$ and our choice of $\theta$ gives that $\partial I$ meets $\partial J$, this in turn implies that $J\in \WW_N$ since $I\notin \WW_N^{\Sigma}$. All these imply
	\begin{equation}
		\psi_N(X) = \dfrac{\sum_{J\in\WW_N} \phi_J(X)}{\sum_{J\in\WW} \phi_J(X)} = \dfrac{\sum_{J\in\WW_N\cap \WW_X} \phi_J(X)}{\sum_{J\in\WW\cap \WW_X} \phi_J(X)} = \dfrac{\sum_{J\in\WW_N\cap \WW_X} \phi_J(X)}{\sum_{J\in\WW_N\cap \WW_X} \phi_J(X)} = 1.
	\end{equation}
	Hence $\psi_N|_{I^{***}} \equiv 1$ for every $I\in\WW_N\setminus\WW_N^{\Sigma}$. This and the bounded overlap of the family $\{I^{***}\}_{I\in\WW_N}$ immediately give that $\nabla \psi_N \equiv 0$ in $\bigcup_{I\in\WW_N\setminus \WW_N^{\Sigma}} I^{***}$.
	
	Finally, it remains to prove the most difficult property, (iv).
	For any $I\in\WW_N^{\Sigma}$, by definition there exists some $J_I \in\WW\setminus \WW_N$ such that $\partial I \cap \partial J_I \neq \emptyset$. Roughly speaking, this is to say that $I$ is a Whitney box living in the ``boundary'' of $\Omega_{\FF^N,Q}^*$.
	 Thus pick any $Q'_I \in \DD$ such that $\WW_{Q'_I}$ contains $J_I$, we know $Q'_I \notin \DD_{\FF^N,Q}$, that is, either $Q'_I \in \DD_{Q_j}$ for some $Q_j\in\FF^N$, or $Q'_I \notin \DD_Q $. We classify $I\in \WW_N^{\Sigma}$ based on which category its associated cube $Q'_I$ lives in: We denote 
	 \[ \Sigma_j = \{I\in\WW_N^{\Sigma}: Q'_I \in \DD_{Q_j}\} \text{ for any } Q_j\in\FF^N,\]
	 and 
	 \[ \Sigma_0 = \{I\in\WW_N^{\Sigma}:  Q'_I \notin \DD_{Q}\}. \] 
	 (Note that for each $I\in\WW_N^{\Sigma}$, we associate it to a unique $Q'_I$, even though the choice itself is not unique.)
	Recall \eqref{ngWbox} we have $\ell(I)\sim\ell(J_I)$. Moreover by the definition of $\WW_Q$ and \eqref{def:WQ},
	\begin{equation}\label{lengthQ}
		\ell(Q'_I) \sim \ell(J_I) \sim \ell(I) \sim \ell(Q_I)
	\end{equation}
	and
	\begin{equation}\label{distQ}
		\dist(Q_I,Q'_I) \leq \dist(Q_I, I) + \dist(I,J_I) + \dist(J_I,Q'_I) \lesssim \ell(Q_I) + \ell(Q'_I) \lesssim \ell(Q'_I).
	\end{equation}
	By similar argument as in remark \ref{rmk:eqvhm} (2) and the doubling property of harmonic measure, we have $\omega(Q_I) \sim \omega(Q'_I)$ for any $I\in\WW_N^{\Sigma}$, with a uniform constant depending on $\eta, K$. Therefore to prove \eqref{hmalmostdisjoint} it suffices to show
	\[ \sum_{I\in\mathcal{W}_N^{\Sigma}} \omega(Q'_I) \lesssim \omega(Q). \]
	
	 We claim that for any $Q_j\in \FF^N$,
	\begin{equation} 
		\sum_{I\in \Sigma_j } \omega(Q'_I) \lesssim \omega(Q_j).
	\end{equation}
	Recall that all such $Q'_I$'s live in $\DD_{Q_j}$.
	For each $k\in\mathbb{N}$ we denote $\Sigma_j^k = \{I\in\Sigma_j: \ell(Q'_I) = 2^{-k} \ell(Q_j) \}$.
	Since $Q_I \in \DD_{\FF^N,Q}$, $Q_j \in \FF^N$, we always have $Q_j \cap Q_I = \emptyset$, so by \eqref{distQ}
	\begin{equation}\label{eq:bdQj}
		\dist\left(Q'_I, \left(Q_j\right)^c \right) \leq \dist(Q'_I, Q_I) \lesssim \ell(Q'_I) = 2^{-k}\ell(Q_j).
	\end{equation}
	That is, the smaller $Q'_I$ is, the closer it is to the ``boundary'' of $Q_j$. The $Q'_I$'s of different generations are very far from being disjoint, however we will sum up the $\omega(Q'_I)$'s by swapping them for the harmonic measure of mutually disjoint cubes.
	By \eqref{eq:bdQj}, for $\rho$ sufficiently small there is an integer $k_1= k_1(\rho)$ such that for any integer $k\geq k_1$,
	\begin{equation}\label{bdQj}
		\bigcup_{k'\geq k} \bigcup_{I\in\Sigma_j^{k'}} Q'_I  \subset \left\{q\in Q_j: \dist\left(q, (Q_j)^c \right) \leq \frac{\rho}{2} \ell(Q_j) \right\}.
	\end{equation} 
	In fact by choosing $k_1$ slightly bigger, we can even guarantee that for any integer $k\geq k_1$,
	\begin{equation}\label{allfuturegnr}
		\bigcup_{k'\geq k} \bigcup_{I\in\Sigma_j^{k'}} Q'_I \subset \bigcup_{i\in\mathcal{I}_k} Q_j^i  \subset \left\{q\in Q_j: \dist\left(q, (Q_j)^c \right) \leq \frac{\rho}{2} \ell(Q_j) \right\},%\tag{boundary-$k_0$}
	\end{equation}
	where $\{Q_j^i\}_{i\in\mathcal{I}_k}$ is the collection of all dyadic cubes in $\DD_{Q_j}$ of length $2^{-k}\ell(Q_j)$ such that $Q_j^i \subset \{q\in Q_j: \dist(q,(Q_j)^c) \leq \rho\ell(Q_j)/2 \}$. By Lemma \ref{lm:dcAR} (v) (vi) the index set $\mathcal{I}_k$ has finite cardinality and $\# \mathcal{I}_k \leq C2^{kd}$. (A priori the set $\mathcal{I}_k$ could be empty, in which case \eqref{allfuturegnr} just means there is no $Q'_I$ corresponding to any $I\in \bigcup_{k'\geq k} \Sigma_j^{k'} $. This case is easy to deal with.)
	
	On the other hand by Lemma \ref{lm:dcAR}, as long as we fix $\rho \in (0,a_0)$ satisfying $A_1 \rho^{\gamma}<1$, the set $\left\{q\in Q_j: \dist(q,(Q_j)^c) > \frac{\rho}{2}\ell(Q_j) \right\} $ is not empty; moreover, there is an integer $k_2$ sufficiently large such that for each $k\geq k_2$ we can find a cube $\widehat{Q}_j$ such that $\ell(\widehat{Q}_j ) = 2^{-k} \ell(Q_j)$ and
	\begin{equation}\label{centerQj}
		\widehat{Q}_j \subset \left\{q\in Q_j: \dist(q,(Q_j)^c) > \frac{\rho}{2}\ell(Q_j) \right\}.
	\end{equation}
	We may think of $\widehat{Q}_j$ as sitting in the ``center'' of $Q_j$, and all $Q'_I$'s in a $\rho/2$-boundary layer of $Q_j$. Let $k_0 = \max\{k_1, k_2\}$, and let $N_1$ denote the (maximal) number of $Q'_I$'s with $\ell(Q'_I) = 2^{-k_0} \ell(Q_j)$. By \eqref{bdQj} and Lemma \ref{lm:dcAR} (vi), $N_1$ is uniformly bounded by a constant depending on $a_0, A_1, \rho, k_0$ and $d$.  Moreover by the doubling property of $\omega$, each such $Q'_I$ satisfies 
	\begin{equation}
		\omega(Q'_I) \leq \omega(Q_j) \leq C(k_0)\omega(\widehat{Q}_j),
	\end{equation}
	with the constant $C(k_0)$ depending on $k_0$ as well as the doubling constant of $\omega$.
	Recall that for each $Q'_I$, the number of all possible $I$'s corresponding to it is uniformly bounded by $C(N_0)$. 
	Therefore
	\begin{equation}\label{levelk0}
		\sum_{I\in\Sigma_j^{k_0}} \omega(Q'_I) \leq C(N_0) \sum_{Q'_I: \ell(Q'_I) = 2^{-k_0} \ell(Q_j)} \omega(Q'_I) \leq  C(N_0) N_1 C(k_0) \omega(\widehat{Q}_j).
	\end{equation}
	Now for any $I\in \Sigma_j^{k}$ with $k=1,\cdots, k_0-1$, again by the doubling property of harmonic measure we have $\omega(Q'_I) \leq C(k_0) \omega(\widehat{Q}_j)$. By Lemma \ref{lm:dcAR} (iv) (v), the total number of $Q'_I$'s in $\DD_{Q_j}$ such that $\ell(Q'_I)=2^{-k}\ell(Q_j) $ with $k=1,\cdots, k_0-1$ is uniformly bounded by a constant depending only on $k_0, a_0, C_1, d$ and the Ahlfors regular constant of $\Gamma$. Thus the total number of $I$'s in $\Sigma_j^k$ with $k=1, \cdots, k_0-1$ is also uniformly bounded. Therefore combining with  \eqref{levelk0}, we get
	\begin{equation}\label{uptok0}
		\sum_{k=1}^{k_0} \sum_{I\in\Sigma_j^{k}} \omega(Q'_I) \lesssim \omega(\widehat{Q}_j).\tag{estimate-$k_0$}
	\end{equation}
	
	For future generations, we recall \eqref{allfuturegnr}, which says all the $Q'_I$'s corresponding to some $I\in \Sigma_j^{k}$ with $k\geq k_0$ are contained in $\bigcup_{i\in\mathcal{I}_{k_0}} Q_j^i$. 
	The following proof is illustrated in the (idealized) Figure \ref{fig}, where each label denotes the cube near it enclosed or shaded by the same color. 
	\begin{figure}[h]
		\centering
		\includegraphics[scale=0.8]{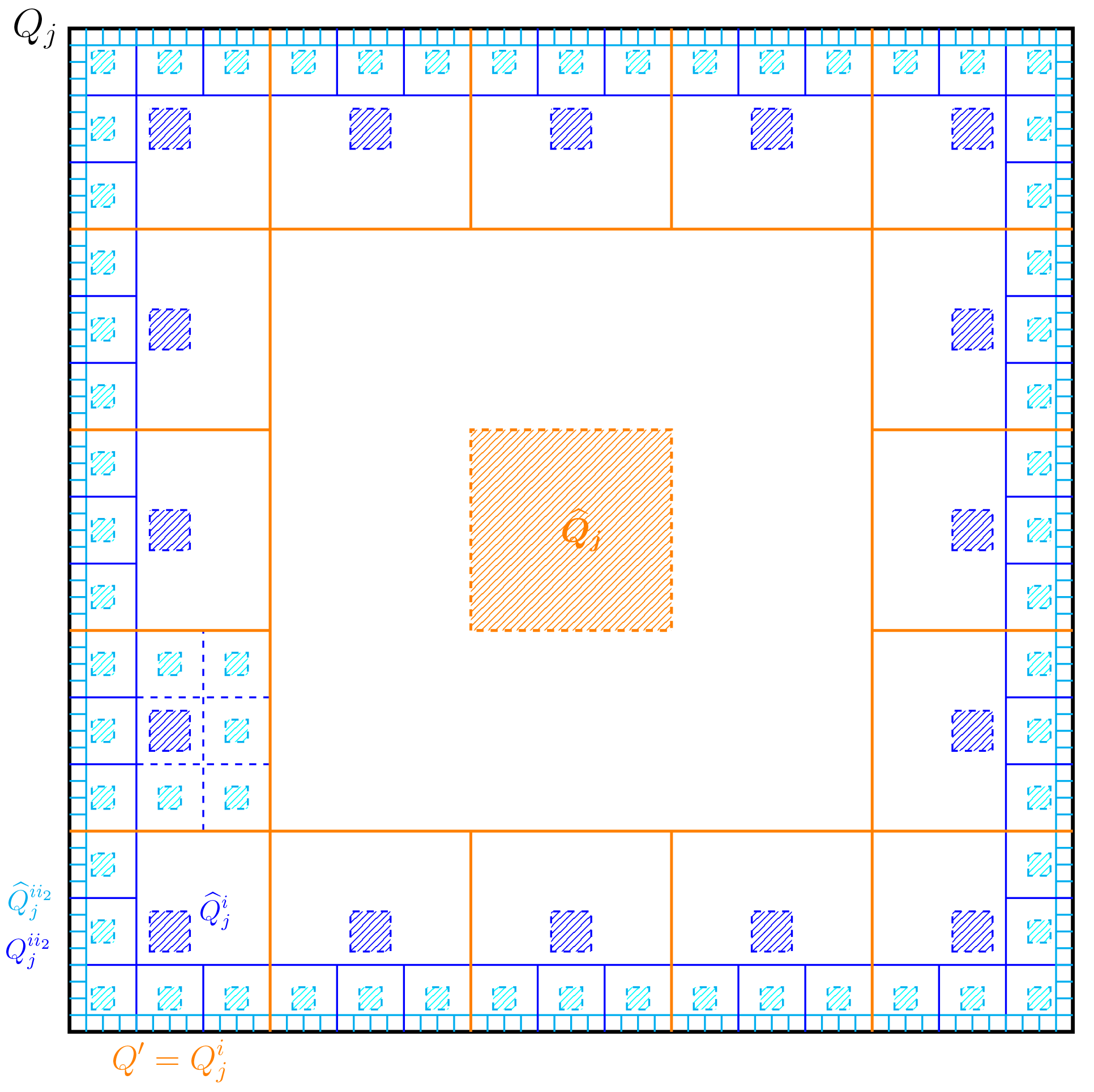}
		%\includegraphics[trim=0 6.5cm 0 0, clip, scale=0.75]{graph.eps}
		%trim = left bottom right top
		\caption{Illustration of the swap of cubes in iteration}
		\label{fig}
	\end{figure}
	Consider any cube $Q'= Q_j^{i}$ for an arbitrary $i\in\mathcal{I}_{k_0}$.
	Apply the above argument to $Q'$ in place of $Q_j$, we can find a cube $\widehat{Q}' = \widehat{Q}^i_j \in\DD_{Q'}$ with length $\ell(\widehat{Q}') = 2^{-k_0} \ell(Q')= 2^{-2k_0}\ell(Q_j)$ sitting in the ``center'' of $Q'$, in the sense that
	\begin{equation}
		%\widehat{Q}' = 
		\widehat{Q}^i_j \subset \left\{q\in Q': \dist(q,(Q')^c) > \frac{\rho}{2} \ell(Q') \right\};
	\end{equation}
	and all future generations satisfy
	\begin{equation}
		\bigcup_{k\geq 2 k_0} \bigcup_{I\in\Sigma_j^k \atop{Q'_I \in \DD_{Q'}}} Q'_I \subset \bigcup_{i_2\in\mathcal{I}_{k_0} }Q_j^{i i_2 } \subset \left\{q\in Q': \dist(q, (Q')^c) \leq \frac{\rho}{2} \ell(Q') \right\},%\tag{boundary-$2k_0$}
	\end{equation}
	where $\{Q_j^{i i_2}\}_{i_2\in\mathcal{I}_{k_0}}$ is the collection of all dyadic cubes of length $2^{-k_0} \ell(Q') = 2^{-2k_0} \ell(Q_j)$ that is completely contained in $\{q\in Q'=Q_j^i: \dist(q, (Q')^c) \leq \rho\ell(Q')/2\}$. (The index set for $i_2$ may not be the same as the index set for $i$, but their cardinalities are uniformly bounded by $C2^{k_0 d}$, so we abuse the notation here and simply assume they are the same.)
	Moreover we can get an analogous estimate of \eqref{uptok0}:
	\begin{equation}\label{analog}
		\sum_{k=k_0+1 }^{2k_0} \sum_{I\in\Sigma_j^k \atop{ Q'_I \in \DD_{Q'}}} \omega(Q'_I) \lesssim \omega(\widehat{Q}^i_j). 
		%\omega(\widehat{Q}').
	\end{equation}
	Summing up \eqref{analog} over all cubes $Q' \in \{Q_j^i\}_{i\in\mathcal{I}_{k_0}}$, recall \eqref{allfuturegnr} we get
	\begin{equation}
		\sum_{k=k_0+1 }^{2k_0} \sum_{I\in\Sigma_j^k } \omega(Q'_I) \lesssim \sum_{i\in\mathcal{I}_{k_0}} \omega(\widehat{Q}_j^i).
	\end{equation}
	%where for each $i$ we use $\widehat{Q}_j^i$ to denote $\widehat{Q}'$, the ``center'' of $Q' = Q_j^i$. 
	Since $\{Q_j^i\}_{i\in\mathcal{I}_{k_0}}$ is a collection of cubes in the same generation, they are mutually disjoint, and their sub-cubes $\{\widehat{Q}_j^i\}_{i\in\mathcal{I}_{k_0}}$ are also mutually disjoint.
	Hence
	\begin{equation}\label{k0to2k0}
		\sum_{k=k_0+1 }^{2k_0} \sum_{I\in\Sigma_j^k } \omega(Q'_I) \lesssim \sum_{i\in\mathcal{I}_{k_0}} \omega(\widehat{Q}_j^i) = \omega\left(  \bigsqcup_{i\in\mathcal{I}_{k_0}} \widehat{Q}_j^i \right) .\tag{estimate-$2k_0$}
	\end{equation}
	Moreover, recall the second inclument of \eqref{allfuturegnr} and \eqref{centerQj}, each $\widehat{Q}_j^i$ is disjoint from $\widehat{Q}_j$, so we can add up \eqref{uptok0} and \eqref{k0to2k0} with ease. 
	We can repeat this argument iteratively: for any $l\in\mathbb{N}$ we apply the argument to cube $Q'=Q_j^{i_1 i_2 \cdots i_{l} }$ with $i_1,\cdots,i_l\in\mathcal{I}_{k_0} $ to get an analogous estimate of \eqref{analog}, then we sum up over the index sets and get
	\begin{equation}\label{sumforl}
		\sum_{k=lk_0+1}^{(l+1)k_0} \sum_{I\in\Sigma_j^k} \omega(Q'_I) \lesssim \sum_{i_1, \cdots,i_l \in \mathcal{I}_{k_0}} \omega\left(\widehat{Q}_j^{i_1\cdots i_l}\right) = \omega\left( \bigsqcup_{i_1,\cdots,i_l\in\mathcal{I}_{k_0}} \widehat{Q}_j^{i_1\cdots i_l} \right).\tag{estimate-$(l+1)k_0$}
	\end{equation}
	Most significantly for us, for each $l\in\mathbb{N}$ the union of cubes on the right hand side of \eqref{sumforl} is disjoint from all the cubes from all previous summations. Therefore we conclude that
	\begin{equation}
		\sum_{k=1}^{\infty} \sum_{I\in\Sigma_j^k} \omega(Q'_I) \lesssim  \omega\left(\bigsqcup_{l\in\mathbb{N}} \left( \bigsqcup_{i_1,\cdots,i_l\in\mathcal{I}_{k_0}} \widehat{Q}_j^{i_1\cdots i_l} \right) \right) \leq \omega(Q_j).
	\end{equation}
	It is trivial to see $\sum_{I\in\Sigma_j^0} \omega(Q'_I) \lesssim \omega(Q_j)$, so
	\begin{equation}
		\sum_{I\in \Sigma_j} \omega(Q'_I) =  \sum_{k\in\mathbb{N}} \sum_{I\in\Sigma_j^k} \omega(Q'_I) \lesssim  \omega(Q_j).
	\end{equation}
	Since the maximal cubes $Q_j$ in $\mathcal{F}^N$ are mutually disjoint and contained in $Q$, we have
	\begin{equation}\label{Sjall}
		\sum_{Q_j\in \FF^N} \sum_{I\in\Sigma_j} \omega(Q'_I) \lesssim \sum_{Q_j\in\FF^N} \omega(Q_j) \leq \omega(Q).
	\end{equation}
	
	Now we consider $I\in\Sigma_0$, which by definition means $Q'_I \notin \DD_Q$. Recall \eqref{lengthQ} and \eqref{distQ}, and that $\ell(I) \leq C \ell(Q)$ for all $I\in \WW_N = \WW_{\FF^N,Q}$, we have
	\begin{equation}
		\ell(Q'_I) \sim \ell(I) \leq C \ell(Q), \quad \dist(Q_I, Q'_I )\lesssim \ell(Q'_I) \leq C \ell(Q).
	\end{equation}
	In particular since $Q_I\in\DD_Q$, we have
	\begin{equation}\label{distQ0}
		\dist(Q'_I, Q) \leq \dist(Q'_I, Q_I) \lesssim \ell(Q'_I) \leq C \ell(Q).
	\end{equation}
	If $\ell(Q'_I) \geq \ell(Q)$, then 
	\begin{equation}
		\ell(Q'_I) \sim \ell(Q), \quad \dist(Q'_I,Q) \leq C\ell(Q).
	\end{equation}
	There are finitely many such $Q'_I$'s and by the doubling property of harmonic measure $\omega(Q'_I) \sim \omega(Q)$.
	If $\ell(Q'_I) < \ell(Q)$, let $Q_0\in \DD$ be the cube containing $Q'_I$ with length $\ell(Q_0) = \ell(Q)$. By the assumption $Q'_I \notin \DD_Q$, we know $ Q_0 $ is disjoint from $Q$. On the other hand \eqref{distQ0} implies
	\begin{equation}\label{sblngh}
		\dist(Q_0,Q) \leq \dist(Q'_I, Q) \leq C\ell(Q),
	\end{equation}
	that is, $Q_0$ is a sibling (i.e. of the same generation) of $Q$ in a $C\ell(Q)$-neighborhood of $Q$. There are finitely many such $Q_0$'s. Moreover
	\begin{equation}
		\dist(Q'_I, (Q_0)^c) \leq \dist(Q'_I, Q) \lesssim \ell(Q'_I).
	\end{equation}
	So if $\ell(Q'_I) \ll \ell(Q) $, we can guarantee that $Q'_I$ lies in the $\rho/2 $-boundary layer of $Q_0$: $Q'_I \subset \{q\in Q_0: \dist(q,(Q_0)^c) \leq \rho\ell(Q_0)/2\}$. Apply the same argument to $Q_0$ in place of $Q_j$, we get 
	\begin{equation}\label{awayQ0}
		\sum_{I\in\Sigma_0 \atop{Q'_I \in \DD_{Q_0}}} \omega(Q'_I) \lesssim \omega(Q_0) \sim \omega(Q).
	\end{equation}
	Summing up \eqref{awayQ0} over all (finitely many) $Q_0$'s satisfying \eqref{sblngh}, we get
	\begin{equation}\label{S0}
		\sum_{I\in\Sigma_0} \omega(Q'_I) \lesssim \omega(Q).
	\end{equation}
	Finally we combine \eqref{Sjall} and \eqref{S0} and conclude that
	\begin{equation}
		\sum_{I\in\WW_N^{\Sigma}} \omega(Q'_I) = \sum_{Q_j\in \FF^N} \sum_{I\in \Sigma_j} \omega(Q'_I) + \sum_{I\in \Sigma_0} \omega(Q'_I) \lesssim \omega(Q).
	\end{equation}
	Therefore
	\begin{equation}
		\sum_{I\in\WW_N^{\Sigma}} \omega(Q'_I) \lesssim \omega(Q).
	\end{equation}
	
\end{proof}

Now that all the preparatory work has been done, we proceed to sketch the basic idea for the proof of Theorem \ref{thm:SlessthanN}. It is well known in harmonic analysis that the proof of $\|Su\|_{L^p(\sigma)} \leq C \|Nu\|_{L^p(\sigma)} $ can be reduced to the proof of a certain good-$\lambda$ inequality measured by $\sigma$. We first prove Proposition \ref{lm:goodlambdaomega0}, which is a good-$\lambda$ inequality measured by $\omega$; then we use the assumption $\omega \in A_\infty(\sigma)$ to obtain the desired good-$\lambda$ inequality for $\sigma$.

Recall that we use $Su, S'u, S{''}u$ to denote the square functions on standard non-tangential cones of aperture $\alpha,\alpha_1, \beta$, respectively, and $Nu$ non-tangential maximal function on cones of aperture $\beta$, where $\beta>\alpha_1>\alpha$ are fixed apertures (see the discussion before Lemma \ref{lm:stcutoff}).
Also recall from \eqref{stinball} that for any collection $\mathcal{F}$ of dyadic cubes, the sawtooth domain $\Omega_{\mathcal{F},Q} \subset B(x_Q, C_3 \ell(Q)) \cap \Omega$. In fact, by choosing a slightly bigger constant $C_3$ we can also guarantee $\Omega_{\FF, Q}^{***} \subset B(x_Q, C_3 \ell(Q)) \cap \Omega$. 
% good-$\lambda$ for $\omega$ and remark.

\begin{proof}[Proof of Proposition \ref{lm:goodlambdaomega0}]
	For simplicity we denote $\omega = \omega^{X_Q}$.  
	Let $E = \{q\in Q:Su(q)> 2\lambda, Nu(q)\leq \delta\lambda\}$ and $F=\{q\in Q: Nu(q) \leq \delta\lambda\}$. If $F$ is empty, then the left hand side of \eqref{eq:goodlambda} is zero, and there is nothing to prove. So we assume $F\neq \emptyset$.
	Note that $Nu(q)$ is a continuous function, so $Q\setminus F=\{q\in Q: Nu(q) >\delta\lambda\}$ is relatively open in $Q$. We run a stopping time procedure for the descendants of $Q$, and stop at $Q' \in \DD_Q$ whenever $Nu(q) > \delta\lambda$ for all $q\in Q'$. We denote the collection of all maximal cubes by $\FF_2 = \{Q_j\}\subset \DD_Q \setminus \{Q\}$. 
	We claim that they form a partition:
\begin{equation}\label{eq:nm}
	Q\setminus F = \{q\in Q: Nu(q) > \delta\lambda\} = \bigcup_{Q_j\in \FF_2} Q_j. 
\end{equation} 
Clearly by construction $\cup_{Q_j\in \FF_2} Q_j$ is contained in the set on the left. For any $q_0 \in Q$ such that $Nu(q_0)>\delta\lambda$, since the set $\{q\in\pO: Nu(q)> \delta\lambda\}$ is open, $Q\setminus F \neq Q$ and the cubes in $\DD$ are nested, there exists a small cube $Q'\in \DD_Q \setminus \{Q\}$ containing $q_0$, such that $Nu(q)>\delta\lambda$ for all $q\in Q'$. By the stopping time procedure, either $Q'\in \FF_2$, or $Q'$ is contained in some cube $Q_j\in \FF_2$. Hence $q_0 \in Q' \subset \bigcup_{Q_j\in\FF_2} Q_j$, and we prove the claim \eqref{eq:nm}.
	Recall \eqref{conesawtooth}, which we rewrite here:
	\begin{equation}\label{conesawtooth2}
		\interior \left( \bigcup_{q\in F} \Gamma^Q_d(q) \right) \subset \Omega_{\FF_2,Q} \subset \overline{\Omega_{\FF_2,Q}} \subset \Omega_{\FF_2,Q}^{***} \subset \bigcup_{q\in F} \widehat{\Gamma}^Q_d(q).
	\end{equation}
	We claim that $|u(X)|\leq \delta\lambda$ for all $X\in \Omega_{\FF_2,Q}^{***}$. In fact, by \eqref{fdntc} and \eqref{conesawtooth2} we know that every $X\in \Omega_{\FF_2,Q}^{***}$ is contained in some $ \widehat{\Gamma}^Q_d(q) \subset \Gamma^{\beta}(q)$ for some $q\in F$. 
	Since $Nu(q) = \sup_{X\in \Gamma^{\beta}(q)} |u(X)| \leq \delta\lambda$ for $q\in F$, we get $|u(X)| \leq \delta\lambda$.

	\textbf{Step 1.}
	Recall the assumption that $S^{'}u(q_1) \leq\lambda$ for some $q_1$ satisfying $|q_1 - q| \leq C_2\diam Q$ for all $q\in Q$. Denote $r=\diam Q$. We claim that for any $\tau>0$ there exists $\delta>0$ sufficiently small such that the truncated square function $S^{}_{\tau r}u(q)>\lambda$ for any $q\in E$. 
	
	Fix $q\in E$.
	Recall that $Su(q) > 2\lambda$ for $q\in E$. We denote $U = \Gamma^{\alpha}(q) \setminus B(q,\tau r)$, then we aim to show
	\begin{equation}\label{eq:tmpSub}
		\iint_{U} |\nabla u(X)|^2 \delta(X)^{1-d} dm(X) \leq 3\lambda^2.
	\end{equation}
	 Let $U_1 = \Gamma^{\alpha}(q) \setminus B(q,t r) $ for a constant $t>\tau$ to be chosen later, and $U_2 = \Gamma^{\alpha}(q) \cap \left( B(q,t r) \setminus B(q,\tau r) \right)$. Then $U= U_1 \cup U_2$.
	A simple computation shows that
	\begin{equation}
		U_1 = \Gamma^{\alpha}(q) \setminus B(q,t r) \subset \Gamma^{\alpha_1}(q_1)
	\end{equation}
	if the apertures satisfy
	\[		(1+\alpha) \left( 1+ \frac{C_2}{t} \right) \leq 1+\alpha_1, \]
	that is, if $t$ is sufficiently large such that
	\[ \alpha + \frac{C_2 \left(1+\alpha\right)}{t} \leq \alpha_1. \]
	Therefore
	\begin{equation}
		\iint_{U_1} |\nabla u(X)|^2 \delta(X)^{1-d} dm(X) \leq \iint_{\Gamma^{\alpha_1}(q_1)} |\nabla u(X)|^2 \delta(X)^{1-d} dm(X) = S'u(q_1)^2 \leq \lambda^2.
	\end{equation}
	Let $\Gamma_j(q) = \Gamma^{\alpha}(q) \cap \left( B(q, 2^j \tau r) \setminus B(q,2^{j-1} \tau r) \right)$ for $j=1,2,\cdots$, then 
	\[ U_2 \subset \bigcup_{j:2^{j-1} \tau r < tr} \Gamma_j(q). \]
	Each $\Gamma_j(q)$ can be covered by a finite union (depending on $n$) of balls $B_{j,k}$ with radius $r_{j,k} \sim_{\alpha} 2^j \tau r$. Let $B^*_{j,k}$ denote a slight fattening of $B_{j,k}$ such that we still have $B^*_{j,k} \subset \Gamma^{\beta}(q)$, then by Lemma \ref{lm:weight} (i) $m(B^*_{j,k}) \sim r_{j,k}^{d+1} \sim (2^j \tau r)^{d+1} $. Then
	\begin{align}
		\iint_{U_2} |\nabla u(X)|^2 \delta(X)^{1-d} dm(X) & = \sum_{2^{j-1} \tau r< tr} \iint_{\Gamma_j(q)} |\nabla u(X)|^2 \delta(X)^{1-d} dm(X) \nonumber \\
		& \sim_{\alpha, \beta} \sum_{2^{j-1} \tau r< tr } (2^j \tau r)^{1-d} \sum_{1\leq k\leq C(n)} \iint_{B_{j,k}} |\nabla u(X)|^2  dm(X) \nonumber \\
		& \lesssim \sum_{2^{j-1} \tau r< tr \atop{ 1\leq k\leq C(n)} } (2^j \tau r)^{-1-d}  \iint_{B_{j,k}^*} |u(X)|^2  dm(X) \nonumber \\
		& \lesssim \left(\delta\lambda\right)^2 \sum_{2^{j-1} \tau r< tr} (2^j \tau r)^{-1-d} m(B^*_{j,k}) \nonumber \\
		& \lesssim \left(\delta\lambda\right)^2 \log_2\left( \frac{t}{\tau} \right) \nonumber \\ 
		& < 2\lambda^2,
	\end{align}
	if $\delta$ is sufficiently small depending on the values of $t, \tau$ and $\alpha, \beta$. Therefore \eqref{eq:tmpSub} holds, and thus for any $q\in E$,
	\begin{align}
		|S_{\tau r} u(q)|^2 & =\iint_{\Gamma^{\alpha}(q) \cap B(q,\tau r) } |\nabla u(X)|^2 \delta(X)^{1-d} dm(X) \nonumber \\
		& = \iint_{\Gamma^{\alpha}(q) \setminus U} |\nabla u(X)|^2 \delta(X)^{1-d} dm(X) \nonumber \\
		& > \lambda^2.\label{eq:lbtruncatedsf}
	\end{align}
	
	\textbf{Step 2.}
	Combining \eqref{eq:lbtruncatedsf} with $E\subset F$ we get
	\begin{align}
		\lambda^2 \omega(E) \leq  \int_{E} |S_{\tau r}u(q)|^2 d\omega(q) & \leq \int_{F} \iint_{\Gamma^{\alpha}_{\tau r}(q)} |\nabla u(X)|^2 \delta(X)^{1-d} dm(X) d\omega(q).\label{eq:left}
	\end{align}
	By \eqref{dntc} we have
	\begin{equation}\label{eq:tmpNTC}
		\Gamma_{\tau r}^{\alpha}(q) \subset \interior \Gamma_d(q) \subset \Gamma_d(q) 
	\end{equation} 
	for any $q\in Q$.
	In particular if $X$ belongs to the left hand side of \eqref{eq:tmpNTC}, then $X\in U_{Q'}$ for some dyadic cube $Q'$ containing $ q$. Moreover 
	\begin{equation}\label{eq:tmptruncated1}
		\delta(X) \leq |X-q| < \tau r = \tau \diam Q \sim \tau \ell(Q).
	\end{equation}
	 By the definition of $U_{Q'}$ and \eqref{def:WQ}, we have 
	 \begin{equation}\label{eq:tmptruncated2}
	 	\delta(X) \gtrsim c\eta^{\frac{n-1}{4(n-1-d)} } \ell(Q') .
	 \end{equation} 	
	 By combining \eqref{eq:tmptruncated1}, \eqref{eq:tmptruncated2} and choosing $\tau$ small enough depending on $\eta$, we can guarantee that $\ell(Q') < 2\ell(Q)$. Since $Q'\cap Q\ni q$, by property (ii) of Lemma \ref{lm:dcAR} we know $Q'\in\DD_Q$. Hence $\Gamma_{\tau r}^{\alpha}(q) \subset \Gamma_d^Q(q)$. Again since $\Gamma_{\tau r}^{\alpha}(q)$ is an open set, we also have $\Gamma_{\tau r}^{\alpha}(q) \subset \interior \Gamma_d^Q(q)$.
	  Therefore
	\begin{equation}
		\bigcup_{q\in F} \Gamma^{\alpha}_{\tau r}(q) \subset \bigcup_{q\in F} \left( \interior \Gamma^Q_d(q) \right) \subset \interior \left( \bigcup_{q\in F} \Gamma^Q_d(q) \right)
	\end{equation}
	Applying Fubini's theorem to the right hand side of \eqref{eq:left}, we conclude that it is bounded by
	\begin{equation}\label{eq:leftFubini}
		\iint_{\interior \left(\bigcup_{p\in F} \Gamma^Q_d(p) \right)} |\nabla u(X)|^2 \delta(X)^{1-d} \omega\left(\{q\in F: X\in \Gamma^Q_d(q)\} \right) dm(X).
	\end{equation}
	For any $p\in F$ and any $X\in \Gamma_d^Q(p)$, we have $X\in I \in \mathcal{W}_{Q'}$ for a cube $Q'$ containing $p$ and in $\DD_{\FF_1,Q}$. Thus $|X-q| \sim \ell(Q')\sim \ell(I) \sim \delta(X)$. Since the family $\{I^*\}_{I\in\WW}$ has bounded overlap and harmonic measure $\omega$ has pole at $X_Q$, we conclude by Lemma \ref{lm:CFMS} that
	\begin{equation}\label{eq:leftCFMS}
		\omega\left(\left\{q\in F: X\in \Gamma^Q_d(q)\right\} \right) \sim \omega\left(\bigcup_{Q'\in \DD_Q \atop{\ell(Q')\sim \delta(X)}\sim \dist(X,Q')} Q' \right) \sim G(X_Q,X) \delta(X)^{d-1}.
	\end{equation}
	Combining \eqref{eq:left}, \eqref{eq:leftFubini}, \eqref{eq:leftCFMS} and \eqref{conesawtooth2} and using \eqref{eq:Alb}, we get
	\begin{align}
		\lambda^2 \omega(E) \lesssim \iint_{\Omega_{\FF_2, Q}}  |\nabla u(X)|^2 G(X_Q,X) dm(X) & = \iint_{\Omega_{\FF_2, Q}}  |\nabla u(X)|^2 G(X_Q,X) w(X) dX \nonumber \\
		&  \lesssim \iint_{\Omega_{\FF_2, Q}} A\nabla u \cdot \nabla u G dX.
	\end{align}
	Here we abbreviate $G(X) = G(X_Q,X)$ when there is no ambiguity as to what the pole is. Recall that the pole $X_Q \notin B(x_Q, 2C_3 \ell(Q))$, and similar to \eqref{stinball} we may choose the dilation constant $\theta$ small enough so that $\overline{\Omega_{\FF_2, Q}^{***}} \subset B(x_Q, \frac32 C_3\ell(Q))$. They guarantee that $X_Q \notin \overline{\Omega_{\FF_2, Q}^{***}}$, and moreover $\dist(X_Q, \overline{\Omega_{\FF_2, Q}^{***}}) \gtrsim \ell(Q)$. Hence $G(X)$ is harmonic in the fat sawtooth domain $\Omega_{\FF_2, Q}^{***}$.
	
	\textbf{Step 3.}
	Next we are going to prove
	\begin{equation}\label{eq:estimateIIP}
		\iint_{\Omega_{\FF_2, Q}} A\nabla u \cdot \nabla u G dX \lesssim (\delta\lambda)^2 \omega(Q).
	\end{equation}
	Recall the discussion before Lemma \ref{lm:stcutoff}, we can augment $\FF_2$ by adding all dyadic cubes of lengths smaller or equal to $2^{-N} \ell(Q)$, and denote by $\FF_2^N$ the collection of maximal cubes giving rise to the aforementioned augmented collection. We claim that
	\begin{equation}
		\iint_{\Omega_{\FF_2^N, Q}} A\nabla u \cdot \nabla u G dX \lesssim (\delta\lambda)^2 \omega(Q)
	\end{equation}
	with a constant independent of $N$. Thus by passing $N\to \infty$ we obtain \eqref{eq:estimateIIP}.

	Recall that in Lemma \ref{lm:stcutoff}, we construct a smooth cut-off function $\psi_N$ such that $\chi_{\OFN^{*}} \lesssim \psi_N \leq \chi_{\OFN^{**}}$. Hence
	\begin{equation}
		\iint_{\OFN} A\nabla u \cdot \nabla u G dX \leq  \iint_{\RR^n} A\nabla u \cdot \nabla u G \psi_N dX
	\end{equation} 
	Since $u, G \in W_r(\OFN^{**}) \cap L^{\infty}(\OFN^{**}) $, we have $uG\psi_N, u^2 \psi_N \in W_0^{1,2}(\OFN^{**})$. In particular they can be approximated by smooth functions in $C_0^{\infty}(\OFN^{**}) \subset C_0^{\infty}(\Omega)$. In the sawtooth region $\OFN^{**}$ we have $-\divg(A\nabla u) = -\divg(A\nabla G) = 0$, thus
	%\reversemarginpar
	%\zihui{It seems if we don't assume symmetry, we need to use $G_{L^*}$ and in particular $\omega_{L^*}$ }
\begin{align}
	&  \iint_{\RR^n} A\nabla u \cdot \nabla u G \psi_N dX \nonumber \\
	 = &  \iint_{\RR^n} A\nabla u \cdot \nabla \left(u G \psi_N \right) - \frac{1}{2} A\nabla (u^2) \cdot \nabla (G\psi_N) dX  \nonumber \\
	 = & 0 - \frac{1}{2} \iint_{\RR^n} A \nabla (G\psi_N) \cdot \nabla (u^2) dX \nonumber \\
	 = & - \frac{1}{2} \left( \iint_{\RR^n} \psi_N A\nabla G \cdot \nabla (u^2) +  G A\nabla \psi_N \cdot \nabla(u^2) dX \right) \nonumber \\
	 = & -\frac{1}{2} \left( \iint_{\RR^n} A\nabla G \cdot \nabla (u^2 \psi_N) - u^2 A\nabla G \cdot \nabla \psi_N + 2 uG A \nabla u \cdot \nabla \psi_N dX \right) \nonumber \\
	 = & \frac{1}{2} \iint_{\RR^n} u^2 A\nabla G \cdot \nabla \psi_N dX - \iint_{\RR^n} uG A\nabla u \cdot \nabla \psi_N dX \nonumber \\
	 =: & \frac{1}{2} I - II, \label{eq:right}
\end{align}
where we use the symmetry of $A$ and the equation $-\divg(A\nabla u) = 0$ in the second equality, and $-\divg(A\nabla G) = 0$ in the second to last equality.
We first estimate the second term.
By \eqref{eq:intsawtooth}, the contribution to the integral $II$ only comes from Whitney boxes $I\in\WW_N^{\Sigma}$. Recall the harmonic function $u$ is non-negative and we use $X_I$ to denote the center of Whitney box $I$. By Lemma \ref{lm:stcutoff} (ii), H\"older inequality, estimate of the weight \eqref{eq:weightin}, interior Cacciopoli inequality \eqref{eq:intCcpl}, Harnack inequality \eqref{eq:Hnk} and \eqref{eq:CFMS}, we have
\begin{align}
	|II| & \leq \sum_{I\in \pWN} \frac{u(X_I)G(X_I)}{\ell(I)}   \iint_{I^{***}} |\nabla u| dm \nonumber \\
	& \leq \sum_{I\in \pWN} \frac{u(X_I)G(X_I)}{\ell(I)} \cdot m(I^{***}) \left( \fiint_{I^{***}} |\nabla u|^2 dm \right)^{\frac{1}{2}} \nonumber \\
	& \lesssim \sum_{I\in \pWN} u(X_I)G(X_I) \ell(I)^{d-1} \left( \fiint_{I^{****}} |u|^2 dm \right)^{\frac{1}{2}} \nonumber \\
	& \lesssim \sum_{I\in \pWN} u(X_I)^2 G(X_I) \ell(I)^{d-1} \nonumber \\
	& \sim \sum_{I\in \pWN} u(X_I)^2 \omega(Q_I),
\end{align}
where $Q_I$ is defined as in Lemma \ref{lm:stcutoff} (iv). Using the estimate $|u(X)| \leq \delta\lambda$ for all $X\in\OFN^{***}$ and \eqref{hmalmostdisjoint}, we have
\begin{equation}\label{eq:right2}
	|II| \lesssim \sum_{I\in \pWN} u(X_I)^2 \omega(Q_I) \lesssim \left(\delta\lambda\right)^2 \omega(Q).
\end{equation}
Similarly,
\begin{align}
	|I| \leq \sum_{I\in\mathcal{W}_N^{\Sigma}} \frac{u(X_I)^2}{\ell(I)} \iint_{I^{***}} |\nabla G| dm \lesssim \sum_{I\in \pWN} u(X_I)^2 \omega(Q_I) \lesssim \left(\delta\lambda\right)^2 \omega(Q).\label{eq:right1}
\end{align}
We finish the proof of \eqref{eq:estimateIIP} by combining \eqref{eq:right}, \eqref{eq:right2} and \eqref{eq:right1}.

Finally we combine \eqref{eq:left} and \eqref{eq:estimateIIP}, and get
\begin{align}
	\lambda^2 \omega(E) \lesssim \left(\delta\lambda\right)^2 \omega(Q).
\end{align}
And thus 
\begin{equation}
	\omega(E) \leq C\delta^2 \omega(Q).
\end{equation}
This finishes the proof of the good-$\lambda$ inequality for $\omega$.
\end{proof}

We will also need the following auxiliary fact:  
\begin{lemma}\label{lm:opensf}
	For any apertures $0<\alpha<\alpha'$ and any function $u\in W_r(\Omega)$, let $Su$ and $ \widetilde{S} u$ denote the square function with aperture $\alpha$ and $\alpha'$ respectively.
	Suppose $\widetilde{S} u<\infty$ for $\sigma$-almost every $q\in \pO$, then the set $\{q\in\pO: Su(q) >\lambda\}$ is open for every $\lambda>0$.
\end{lemma}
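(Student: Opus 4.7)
The plan is to establish that $Su$ is lower semicontinuous on $\pO$, from which the openness of $\{q : Su(q) > \lambda\}$ follows immediately. Fix $q_0 \in \pO$ with $Su(q_0) > \lambda$; the goal is to produce an explicit radius $r_0 > 0$ such that $Su(q) > \lambda$ for every $q \in \pO$ with $|q - q_0| < r_0$.

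First I would exhaust the open set $\Gamma^\alpha(q_0)$ by compact subsets. Set
\[
K_j := \bigl\{X \in \Omega : |X - q_0| \leq (1 + \alpha - \tfrac{1}{j})\delta(X),\ \tfrac{1}{j} \leq \delta(X) \leq j\bigr\}.
\]
Each $K_j$ is closed in $\RR^n$ (since $\delta \geq 1/j > 0$ keeps it away from $\Gamma$) and bounded, hence compact, and is contained strictly inside $\Gamma^\alpha(q_0)$. Moreover every $X \in \Gamma^\alpha(q_0)$ satisfies $\delta(X) > 0$ and $(1+\alpha)\delta(X) - |X - q_0| > 0$, so $X \in K_j$ for all $j$ sufficiently large. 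Monotone convergence then yields
\[
\iint_{K_j} |\nabla u(X)|^2 \delta(X)^{1-d} \, dm(X) \nearrow Su(q_0)^2 > \lambda^2,
\]
and one may fix $j$ so that the left-hand side already exceeds $\lambda^2$. (If $Su(q_0) = +\infty$ the same conclusion holds, since the integrals tend to $+\infty$.)

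Next I would propagate this lower bound to a neighborhood of $q_0$. The continuous function $h(X) := (1+\alpha)\delta(X) - |X - q_0|$ is bounded below by $\delta(X)/j \geq 1/j^2$ on $K_j$, so $r_0 := \min_{X \in K_j} h(X) > 0$. For any $q \in \pO$ with $|q - q_0| < r_0$ and any $X \in K_j$,
\[
|X - q| \leq |X - q_0| + |q - q_0| < |X - q_0| + h(X) = (1+\alpha)\delta(X),
\]
so $K_j \subset \Gamma^\alpha(q)$ and consequently
\[
Su(q)^2 \geq \iint_{K_j} |\nabla u|^2 \delta^{1-d} \, dm > \lambda^2.
\]
This delivers the desired neighborhood and concludes the proof of openness.

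The only step requiring any care is the compact exhaustion in the first paragraph; the hypothesis $\widetilde{S}u < \infty$ $\sigma$-a.e.\ enters essentially to guarantee that $Su \leq \widetilde{S}u$ is $\sigma$-a.e.\ finite, so that the level sets are typically nontrivial. The same reasoning can be repackaged via Fatou's lemma applied along any sequence $q_n \to q_0$: the pointwise inequality $\chi_{\Gamma^\alpha(q_0)}(X) \leq \liminf_n \chi_{\Gamma^\alpha(q_n)}(X)$ gives $Su(q_0)^2 \leq \liminf_n Su(q_n)^2$ directly. Either formulation avoids any interaction with the boundary $\Gamma$ or with the wider aperture $\alpha'$, so the proof requires nothing beyond the continuity of $X \mapsto \delta(X)$ and $X \mapsto |X-q|$.
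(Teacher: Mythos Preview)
Your argument is correct and genuinely simpler than the paper's. You prove lower semicontinuity of $Su$ directly via compact exhaustion: once the integral over a compact set $K_j\subset\Gamma^\alpha(q_0)$ exceeds $\lambda^2$, the strict inequality $|X-q_0|\le(1+\alpha-\tfrac1j)\delta(X)$ on $K_j$ gives enough slack that $K_j\subset\Gamma^\alpha(q)$ for every nearby $q$. This uses nothing beyond continuity of $\delta$ and monotone convergence, and as you observe, it does not need the hypothesis on the wider aperture $\alpha'$ at all.

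The paper takes a different route: it truncates the cone away from the vertex, writing $\Gamma^\alpha(q)\setminus B(q,\eta)$, and then compares this set for two nearby points by controlling the symmetric difference $D$. That symmetric difference is trapped in a thin ``shell'' $V_\epsilon$ between cones of apertures $(1+\alpha)(1-\epsilon)^2$ and $(1+\alpha)\tfrac{1-\epsilon}{1-2\epsilon}$, and the paper needs $V_\epsilon\subset\Gamma^{\alpha'}(q)$ together with $\widetilde S u(q)<\infty$ to invoke continuity of measure from above and conclude $\iint_{V_\epsilon}\to 0$. So the wider-aperture finiteness is used in the paper precisely to dominate the symmetric difference by a finite quantity; your compact-exhaustion argument sidesteps this entirely by working with a set that sits inside \emph{both} cones rather than comparing their difference. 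Your alternative phrasing via Fatou applied to $\chi_{\Gamma^\alpha(q_n)}$ is also valid and makes the same point. One small quibble: your remark that the hypothesis ``enters essentially to guarantee that $Su\le\widetilde Su$ is $\sigma$-a.e.\ finite, so that the level sets are typically nontrivial'' is not quite the right explanation---openness is fine even for trivial level sets, and your proof simply does not use the hypothesis in any form.
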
 
\noindent The proof is  similar in spirit to that of Lemma 4.6 in \cite{HACAD}.

\begin{proof}
	If $q\in\pO$ is such that $S'u(q) >\lambda$, then there exists $\eta>0$ so that
\[ \iint_{\Gamma^\alpha(q) \setminus B(q,\eta)} |\nabla u|^2 \delta(X)^{1-d} dm(X) > \left( \frac{Su(q) + \lambda}{2} \right)^2. \]
We claim that there exists $\epsilon>0$ such that for any $p\in \Delta(q,\epsilon \eta)$, we have
\begin{equation}\label{eq:claimsmall}
	\iint_{\Gamma^\alpha(p) \setminus B(p,\eta)} |\nabla u|^2 \delta(X)^{1-d} dm(X) > \lambda^2, 
\end{equation} 
and therefore $Su(p)> \lambda$.

We observe that
\begin{align}
	& \left|\iint_{\Gamma^\alpha(q) \setminus B(q,\eta)} |\nabla u|^2 \delta(X)^{1-d} dm(X)- \iint_{\Gamma^\alpha(p) \setminus B(p,\eta)} |\nabla u|^2 \delta(X)^{1-d} dm(X)\right| \nonumber \\  & \qquad  \qquad \leq \iint_{D} |\nabla u|^2 \delta(X)^{1-d} dm(X),\label{eq:intgsetdiff}
\end{align}
where $D= \left(\Gamma^\alpha(q) \setminus B(q,\eta) \right) \triangle \left( \Gamma^\alpha(p) \setminus B(p,\eta) \right)$ is the set difference. It suffices to show that the integral $\iint_D |\nabla u|^2 \delta(X)^{1-d} dm(X)$ is sufficiently small, if we choose $\epsilon$ sufficiently small.

Suppose that $X\in \Gamma^\alpha(q) \setminus B(q,\eta)$, then $|X-q| < (1+\alpha)\delta(X)$ and $|X-q|\geq \eta$. Thus $\delta(X) > \frac{\eta}{1+\alpha}$. If moreover $X\notin \Gamma^{\alpha}(p) \setminus B(p,\eta)$ and $p\in B(q,\epsilon \eta)$, then $|X-q| > (1+\alpha) (1-\epsilon) \delta(X)$. By symmetry, we need to study sets of the form
\[ V_q = \left\{X\in\Omega: |X-q| \geq \eta,~ (1+\alpha)(1-\epsilon) \delta(X) < |X-q| < (1+\alpha) \delta(X) \right\}, \]
\[ V_p = \left\{X\in\Omega: |X-p| \geq \eta,~ (1+\alpha)(1-\epsilon) \delta(X) < |X-p| < (1+\alpha) \delta(X) \right\}. \]
Without loss of generality we may assume $S'u(q)<\infty$. If not, by the assumption that $S'u<\infty$ almost everywhere, we can always find $q' \in \Delta(q, \epsilon \eta/2)$ such that $S'u(q')<\infty$, and in particular $p\in \Delta(q,\epsilon \eta) \subset \Delta(q',2\epsilon \eta)$. In this case we just replace $q$ by $q'$, and $\epsilon$ by $2\epsilon$. 
Moreover, if $\epsilon < 1/4$, we have that
\[ V_q \cup V_p \subset V_\epsilon := \{X\in\Omega: |X-q| \geq \frac{\eta}{2},~ (1+\alpha)(1-\epsilon)^2\delta(X) < |X-q| < (1+\alpha)\frac{1-\epsilon}{1-2\epsilon} \delta(X) \}. \]
Note that for given $\alpha'>\alpha$, by choosing $\epsilon$ sufficiently small we can guarantee $(1+\alpha) \frac{1-\epsilon}{1-2\epsilon} \leq 1+\alpha'$. Thus $V_\epsilon \subset \Gamma^{\alpha'}(q) \setminus B \left(q, \frac{\eta}{2} \right)=:V_0 $, and as $\epsilon$ tends to zero, the set $V_\epsilon$ decreases to an empty set. Moreover, 
\[ \iint_{V_0} |\nabla u|^2 \delta(X)^{1-d} dm(X) \leq \iint_{\Gamma^{\alpha'}(q)} |\nabla u|^2 \delta(X)^{1-d} dm(X) =|S'u(q)|^2 <\infty, \]
hence by the continuity of measure from above, we deduce that
\[ \iint_{V_\epsilon} |\nabla u|^2 \delta(X)^{1-d} dm(X) \searrow 0. \]
In particular, by choosing $\epsilon$ sufficiently small, we can guarantee
\begin{equation}\label{eq:intgsmall}
	\iint_{D} |\nabla u|^2 \delta(X)^{1-d} dm(X) \leq  \iint_{V_\epsilon} |\nabla u|^2 \delta(X)^{1-d} dm(X) < \left( \frac{Su(q)+\lambda}{2} \right)^2 - \lambda^2
\end{equation}
Combining \eqref{eq:intgsmall} with \eqref{eq:intgsetdiff}, we conclude the proof of the claim \eqref{eq:claimsmall}.
\end{proof}

Now we set out to complete the
\vskip 0.08in
\begin{proof}[Proof of Theorem \ref{thm:SlessthanN}]
%%%%%%%%%%%%%%%%%%%%%%%%%%%%%%%%%%%%%%%%%%%%%%%%%%%%%%%%%%%%%%%%%%%%%%%%%%%
We first prove the theorem assuming that $\|S'u\|_{L^p(\sigma)}$ is finite. 
%%%%%%%%%%%%%%%%%%%%%%%%%%%%%%%%%%%%%%%%%%%%%%%%%%%%%%%%%%%%%%%%%%%%%%%%%%%
Under this assumption, we have that $\|S''u\|_{L^p(\sigma)} \sim \|S'u\|_{L^p(\sigma)} \sim \|Su\|_{L^p(\sigma)}$. For reference, see Proposition 4 of \cite{CMS}. (The stated proof in \cite{CMS} is for the upper half plane, but the argument goes through for Ahlfors regular sets of higher codimension.)
Therefore by a standard argument, the proof of \eqref{eq:SlessthanN} can be reduced to the following good-$\lambda$ inequality: For any $\epsilon>0$ sufficiently small, we can find $\delta= \delta(\epsilon)>0$ such that for all $\lambda>0$,
	\begin{equation}\label{eq:globalgoodlambdas}
		\sigma\left(\left\{ q\in \pO: Su(q) > 2\lambda, Nu(q) \leq \delta\lambda \right\} \right) \leq \epsilon \sigma\left( \left\{q\in \pO: S'u(q) > \lambda \right\} \right),
	\end{equation}
	and $\delta\to 0$ as $\epsilon \to 0$.
If $\{q\in\pO:S'u(q)>\lambda\}$ is empty, \eqref{eq:globalgoodlambdas} is trivial, so we assume the set is not empty. 
We apply Lemma \ref{lm:opensf} with apertures $0<\alpha_1<\beta$. Since $\|S''u\|_{L^p(\sigma)}\sim \|S'u\|_{L^p(\sigma)} <\infty$, in particular $S''u(q)<\infty$ almost everywhere. Therefore $\{q\in\pO: S'u(q)>\lambda\}$ is open. 
We also remark that the set $\{q\in\pO: S'u(q) >\lambda\}$ has finite $\sigma$-measure, and moreover
\begin{equation}
	\sigma\left(\{q\in\pO: S'u(q) >\lambda\}\right) \leq \frac{1}{\lambda^p}\int_{S'u(q) >\lambda} |S'u|^p d\sigma \leq \frac{\|S'u\|_{L^p(\sigma)}^p}{\lambda^p} <\infty.
\end{equation}
In particular, for any dyadic cube $Q\in\DD$ completely contained in $\{q\in\pO: S'u(q) >\lambda\}$ 
\begin{equation}\label{eq:SNtmp1}
	\ell(Q)^{d} \sim \sigma(Q) \leq \sigma\left(\{q\in\pO: S'u(q) >\lambda\}\right) \leq \frac{\|S'u\|_{L^p(\sigma)}}{\lambda^p},
\end{equation}
so its length has a uniform upper bound (albeit depending on the value of $\lambda$). Recall that $\ell(Q) \sim 2^{-k(Q)}$, and suppose $k_0 \in\mathbb{Z}$ is such that
\begin{equation}\label{eq:SNtmp2}
	2^{-k_0 d} \gtrsim \frac{\|S'u\|_{L^p(\sigma)}}{\lambda^p},
\end{equation} 
with a sufficiently large implicit constant.
Then by \eqref{eq:SNtmp1}, any cube $Q_0$ in $ \DD_{k_0}$ can not be completely contained in $\{q\in\pO: S'u(q) >\lambda\}$.

We run a stopping time procedure as follows: For each $Q_0 \in \DD_{k_0}$, we traverse all its descendants, and stop whenever we find a cube $Q\in \DD_{Q_0}$ such that $S'u(q) > \lambda$ for all $q\in Q$. Let $\FF_1=\{Q_l\}$ be the collection of all stopping cubes in $\bigcup_{Q_0 \in \DD_{k_0}} \DD_{Q_0}$. Similar to the proof of \eqref{eq:nm}, we can show that they form a partition:
\begin{equation}\label{eq:st}
	\{q\in \pO: S'u(q) >\lambda \} = \bigcup_{Q_l \in \FF_1} Q_l. 
\end{equation} 
Note that the assumption $Su(q)>2\lambda$ clearly implies $S'u(q)>\lambda$, namely 
\[ \{q\in\pO: Su(q)>2\lambda\} \subset \{q\in \pO: S'u(q)>\lambda \} = \bigcup_{Q_l\in \FF_1} Q_l. \] 
Therefore to prove \eqref{eq:globalgoodlambdas}, it suffices to localize and show that
\begin{equation}\label{eq:goodlambdas}
	\sigma\left(\left\{ q\in Q : Su(q) > 2\lambda, Nu(q) \leq \delta\lambda \right\} \right) \leq \epsilon \sigma\left( Q \right) \quad \text{ for any } Q=Q_l\in\FF_1.
\end{equation}

Recall that by \eqref{nesteddcube}, every $Q\in\DD$ is contained in a surface ball $\Delta(x_Q,C_2 r_Q)$.
Let $X'_Q$ denote a corkscrew point for $B(x_Q, C_2 r_Q)$. Recall Definition \ref{def:Ainfty} of $\omega\in A_{\infty}(\sigma)$ and Remark \ref{rmk:defAinfty} (ii) right afterwards. Assuming $\omega \in A_\infty(\sigma)$, then to prove \eqref{eq:goodlambdas} it suffices to show
\begin{equation}\label{eq:goodlambdatmp}
	\omega^{X'_Q}\left(\{q\in Q: Su(q)>2\lambda, Nu(q) \leq \delta\lambda\} \right) \leq C(\delta) \omega^{X'_Q}(Q),
\end{equation}
with a constant $C(\delta)$ independent of $Q$ and $ \lambda$, and that $C(\delta)\to 0$ as $\delta\to 0$.
Recall that for any collection $\mathcal{F}$ of dyadic cubes, there is a constant $C_3$ such that $\Omega_{\FF, Q}^{***} \subset B(x_Q, C_3 \ell(Q)) \cap \Omega$. Let $X_Q$ be a corkscrew point for $B(x_Q, 2C_3 M\ell(Q))$, then
\begin{equation}
	|X_Q - x_Q| \geq \delta(X_Q) \geq 2C_3 \ell(Q).
\end{equation}
Thus $X_Q \notin B(x_Q, 2C_3 \ell(Q))$, and in particular $X_Q \notin \overline{\Omega_{\FF,Q}^{***}}$.
  Moreover, there is a Harnack chain of finite length (depending only on $M, C_2$ and $C_3$) connecting $X_Q$ to $X'_Q$; in particular the harmonic measures $\omega^{X_Q}(E) \sim \omega^{X'_Q}(E)$ for any Borel set $E\subset Q$. Therefore the proof of \eqref{eq:goodlambdatmp} is equivalent to the proof of
\begin{equation}\label{eq:goodlambdatmp2}
		\omega^{X_Q}\left(\left\{ q\in Q: Su(q) > 2\lambda, Nu(q) \leq \delta\lambda \right\} \right) \leq C(\delta) \omega^{X_Q}\left( Q \right).
\end{equation}

Recall that $Q=Q_l\in\FF_1$ is a maximal cube with respect to the stopping criterion $\{S'u(q)>\lambda\}$. By maximality the parent of $Q$, denoted by $\widetilde Q$, contains at least one point $q_1 \notin \{q \in\pO: S'u(q)>\lambda\}$, that is, $S'u(q_1)\leq \lambda$. For any $q\in Q$ we have
\begin{equation}
	|q_1-q| \leq \diam \widetilde Q \leq A_1 2^{-k(\widetilde Q)} = A_1 2^{-(k(Q)-1)}\leq \frac{A_1}{a_0} \diam Q.
\end{equation} 
Therefore for any maximal cube, we may use Lemma \ref{lm:goodlambdaomega}, with constant $C_2 = A_1/a_0$, to conclude the desired estimate \eqref{eq:goodlambdatmp2}.

\medskip

All the above arguments show that if we know a priori $\|S'u\|_{L^p(\sigma)}$ is finite, we can prove $\|Su\|_{L^p(\sigma)} \lesssim \|Nu\|_{L^p(\sigma)}$. If we do not have this a priori information, then for $\kappa$ sufficiently small we let
\begin{equation}
	\DD_\kappa = \left\{Q\in\DD: \kappa \leq \ell(Q) \leq 1/\kappa \right\},
\end{equation}
\begin{equation}
	\Omega_\kappa = \bigcup_{Q\in\DD_\kappa} U_Q, \quad \Omega^*_{\kappa} = \bigcup_{Q\in\DD_\kappa} U_Q^*, \quad \Omega^{**}_{\kappa} = \bigcup_{Q\in\DD_\kappa} U_Q^{**}\quad \text{etc.}
\end{equation}  
and define the $\kappa$-approximate non-tangential cones as 
\[ \Gamma^{\alpha}_\kappa(q) = \Gamma^{\alpha}(q) \cap \Omega_\kappa, \quad \Gamma_\kappa^{\alpha_1} = \Gamma^{\alpha_1}(q) \cap \Omega_\kappa, \quad \Gamma^\beta_\kappa(q) = \Gamma^\beta(q) \cap \Omega_{\kappa}^{***}, \] 
define the $\kappa$-approximate \textit{dyadic} non-tangential cones as
\[ \Gamma_{d,\kappa}(q) = \Gamma_d(q) \cap \Omega_\kappa = \bigcup_{Q\in\DD^\kappa: Q\ni q} U_{Q}, \quad \widehat{\Gamma}_{d,\kappa}(q) = \widehat{\Gamma}_d(q) \cap \Omega^{***}_\kappa. \]
In this regime we have the following inclusions analogous to \eqref{dntc} and \eqref{fdntc}:
\begin{equation}
	\Gamma^{\alpha}_\kappa(q) \subset \Gamma_{d,\kappa}(q) \subset \Gamma^{\alpha_1}_{\kappa}(q), \quad \widehat{\Gamma}_{d,\kappa}(q) \subset \Gamma^\beta_\kappa(q).
\end{equation}
Moreover, the $\kappa$-approximate local non-tangential cones 
\[ 
	\Gamma^Q_{d,\kappa}(q) = \Gamma_d^Q(q) \cap \Omega_\kappa =  \bigcup_{Q' \in \DD_Q \cap \DD^\kappa: Q'\ni q} U_{Q'}, \quad \widehat{\Gamma}^Q_{d,\kappa}(q) = \widehat{\Gamma}_d^Q \cap \Omega_\kappa^{***} 
\]
satisfy the following inclusions analogous to \eqref{conesawtooth}:
\[ \bigcup_{q\in F} \Gamma^Q_{d,\kappa}(q) \subset \Omega_{\FF,Q} \cap \Omega_\kappa \subset \overline{\Omega_{\FF,Q} \cap \Omega_\kappa} \subset \Omega_{\FF,Q}^{***} \cap \Omega_\kappa^{***} \subset \bigcup_{q\in F} \widehat{\Gamma}_{d,\kappa}^Q(q), \]
for any dyadic cube $Q$ and collection of maximal cubes $\Gamma \subset \DD_Q \setminus \{Q\}$, under the assumption that $F= Q \setminus \bigcup_{Q_j\in\FF} Q_j$ is not empty.
We then define the $\kappa$-approximate square functions $S_\kappa u, S'_\kappa u$ and non-tangential maximal function $N_\kappa u$ accordingly, as integrals defined on the $\kappa$-approximate non-tangential cones instead of standard non-tangential cones.
Since $N_\kappa u(q) \leq Nu(q)$ for all $q\in\pO$, we have $\|N_\kappa u\|_{L^p(\sigma)} \leq \|Nu\|_{L^p(\sigma)} <\infty$. By the interior Cacciopoli inequality \eqref{eq:intCcpl} and that $\beta>\alpha_1>\alpha$, we have
\[ S_\kappa u(q) \leq S'_\kappa u(q) \lesssim C(\kappa) N_{\kappa } u(q), \]
and thus
\begin{equation}\label{eq:cantpasstoinfty}
	\|S'_\kappa u\|_{L^p(\sigma)} \lesssim C(\kappa) \|N_{\kappa} u\|_{L^p(\sigma)} \leq  C(\kappa) \|N u\|_{L^p(\sigma)} <\infty.
\end{equation} 
We can not let $\kappa$ go to zero in \eqref{eq:cantpasstoinfty} since the upper bound in the right hand side depends on $\kappa$ (in fact $C(\kappa) \to \infty$ as $\kappa \to 0$). However, since $\|S'_\kappa u\|_{L^p(\sigma)}$ is finite, we can apply the previous arguments and prove that $\|S_\kappa u\|_{L^p(\sigma)} \lesssim \|N_{\kappa} u\|_{L^p(\sigma)}$, with a constant independent of $\kappa$. Hence
\[ \|S_\kappa u\|_{L^p(\sigma)} \lesssim \|N_\kappa u\|_{L^p(\sigma)} \leq C\|N u\|_{L^p(\sigma)} \]
with a constant $C$ independent of $\kappa$. Therefore we can safely let $\kappa$ go to zero and conclude that
\[ \|Su\|_{L^p(\sigma)} = \limsup_{\kappa \to 0} \|S_\kappa u\|_{L^p(\sigma)} \leq C \|Nu\|_{L^p(\sigma)}. \]
This finishes the proof of Theorem \ref{thm:SlessthanN}.
\end{proof}

%%%%%%%%%%%%%%%%%%%%%%%%%%%%%%%%%%%%%%%%%
%%%%%%%%%%%%%%%%%%%%%%%%%%%%%%%%%%%%%%%%%
%%%%%%%%%%%%%%%%%%%%%%%%%%%%%%%%%%%%%%%%%
\section{$\omega \in A_{\infty}(\sigma)$ is equivalent to BMO-solvability}\label{sect:showBMOs}

\subsection{From $\omega \in A_{\infty}(\sigma)$ to $L^p$-solvability}
\begin{theorem} %[\cite{CBMS} Theorem 1.4.13(vii) and Lemma 1.4.2]
\label{thm:Nu}
Assume $\omega\in A_{\infty}(\sigma)$, then there exist some $p_0 \in (1, \infty)$ such that the elliptic problem \eqref{ellp} is $L^p-$solvable for all $p \in (p_0, \infty)$, in the sense that there exists a universal constant $C>0$ such that for any $f\in C_0^0(\Gamma)$ and any Borel set $E\subset\Gamma$,
	the solution $u(X) = \int_{E} f d\omega^X$ satisfies the estimate $\|Nu\|_{L^p(\sigma)}\leq C\|f\chi_E \|_{L^p(\sigma)}$.	
\end{theorem}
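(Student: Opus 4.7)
The plan is to reduce the theorem to the pointwise domination $Nu(q) \leq C\,\mathcal{M}_{p_0}(f\chi_E)(q)$, where $\mathcal{M}_{p_0}g(q) := \sup_{\Delta\ni q}\bigl(\fint_\Delta |g|^{p_0}\,d\sigma\bigr)^{1/p_0}$ is the $L^{p_0}$ Hardy--Littlewood maximal operator on the space of homogeneous type $(\Gamma,\sigma)$, and $p_0 := r_0'$ is the H\"older conjugate of the reverse H\"older exponent $r_0>1$ furnished by Remark \ref{rmk:defAinfty}(iii) for the Poisson kernel $k^{A_\Delta}$. The Hardy--Littlewood maximal inequality then gives $\|Nu\|_{L^p(\sigma)} \leq C\|f\chi_E\|_{L^p(\sigma)}$ for every $p>p_0$, proving Theorem \ref{thm:Nu}. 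By splitting $f = f_+-f_-$ and using linearity in \eqref{def:solbyitg}, we may assume $f\geq 0$, and hence $u\geq 0$.

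To establish the pointwise bound, fix $q\in\Gamma$ and $X\in\Gamma^\alpha(q)$; set $r=\delta(X)$, $R_0 = 2(1+\alpha)r$, $\Delta^{(k)} = \Delta(q, 2^k R_0)$, and $A^{(k)} := A_{\Delta^{(k)}}$. Since $X$ and $A^{(0)}$ both lie at distance $\sim r$ from $\Gamma$ and at distance $\sim r$ from each other, the Harnack chain condition (Lemma \ref{lm:Hcc}) together with the Harnack inequality (Lemma \ref{lm:Hnk}) give $\omega^X\sim\omega^{A^{(0)}}$ as measures on $\Gamma$, and so $u(X)\sim \int_E f\,d\omega^{A^{(0)}}$. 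Decompose
\[
\int_E f\,d\omega^{A^{(0)}} = \int_{E\cap\Delta^{(0)}} f\,d\omega^{A^{(0)}} + \sum_{k\geq 0}\int_{E\cap (\Delta^{(k+1)}\setminus\Delta^{(k)})} f\,d\omega^{A^{(0)}}.
\]
The local term is controlled by applying H\"older's inequality (exponents $p_0$ and $r_0$) together with the reverse H\"older estimate \eqref{eq:RHkernel} for $k^{A^{(0)}}$ on $\Delta^{(0)}$, giving the upper bound $C\,\bigl(\fint_{\Delta^{(0)}}|f\chi_E|^{p_0}\,d\sigma\bigr)^{1/p_0}\leq C\,\mathcal{M}_{p_0}(f\chi_E)(q)$. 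For the annular terms the crucial intermediate claim is the absolute continuity bound
\[
\omega^{A^{(0)}}\big|_{\Delta^{(k+1)}\setminus\Delta^{(k)}} \leq C\, 2^{-k\beta}\, \omega^{A^{(k+1)}}\big|_{\Delta^{(k+1)}\setminus\Delta^{(k)}}, \qquad k\geq 0,
\]
where $\beta\in(0,1]$ is the boundary H\"older exponent from Lemma \ref{lm:bdHolder}. Assuming this, the same reverse-H\"older-plus-H\"older argument on $\Delta^{(k+1)}$ with pole $A^{(k+1)}$ yields $\int_{E\cap(\Delta^{(k+1)}\setminus\Delta^{(k)})} f\,d\omega^{A^{(0)}} \lesssim 2^{-k\beta}\,\mathcal{M}_{p_0}(f\chi_E)(q)$, and summing this geometric series closes the pointwise bound.

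To prove the intermediate claim, one represents the harmonic measure of an infinitesimal surface ball $\Delta(q',r')\subset\Delta^{(k+1)}\setminus\Delta^{(k)}$ via the CFMS estimate \eqref{eq:CFMS} as $(r')^{d-1}G(\,\cdot\,, A_{r'}(q'))$ (valid for $r'$ small compared with $2^k R_0$), so the ratio $\omega^{A^{(0)}}(\Delta(q',r'))/\omega^{A^{(k+1)}}(\Delta(q',r'))$ is comparable to $G(A^{(0)},Y)/G(A^{(k+1)},Y)$ with $Y := A_{r'}(q')$. Since $G(\,\cdot\,,Y)$ is a non-negative solution in $B(q, 2^k R_0)$ that vanishes on $\Delta^{(k)}$, Lemma \ref{lm:bdHolder} combined with the interior Harnack inequality (Lemma \ref{lm:Hnk}) gives the decay $G(A^{(0)},Y)\leq C\,2^{-k\beta}\,G(A^{(k+1)},Y)$; letting $r'\to 0$ and invoking Lebesgue differentiation for the doubling measure $\omega^{A^{(k+1)}}$ yields the pointwise domination of Radon--Nikodym derivatives on the annulus. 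The main obstacle is precisely this step: the standard change-of-pole formula (Lemma \ref{lm:cop}) is inapplicable because the near pole $A^{(0)}$ sits inside $2\Delta^{(k+1)}$, so one cannot compare $\omega^{A^{(0)}}$ and $\omega^{A^{(k+1)}}$ directly. The remedy is to trade the missing change-of-pole estimate for the geometric decay produced by boundary H\"older regularity of the Green function, which is exactly what makes the series over annuli convergent and ultimately produces the desired $L^p$-solvability.
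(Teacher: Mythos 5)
Your proposal is correct and follows essentially the same path as the paper: dominate $Nu(q)$ pointwise by an $L^p$ maximal function via a dyadic annular decomposition, use the reverse H\"older inequality \eqref{eq:RHkernel} for the kernel with pole at the corkscrew point of each dilated ball, extract the geometric decay $2^{-k\beta}$ from boundary H\"older regularity, and sum. The only (cosmetic) difference is that you derive the decay of $\omega^{A^{(0)}}$ on the annuli by passing through the Green function and the CFMS estimate \eqref{eq:CFMS}, whereas the paper applies Lemmas \ref{lm:bdHolder} and \ref{lm:bdHnk} directly to the solution $Z\mapsto\omega^{Z}(\Delta')$ for small surface balls $\Delta'$ in the annulus — the two are equivalent by \eqref{eq:CFMS}, and both arguments are valid.
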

\begin{remark}
	For a bounded set $E$, it suffices to assume that $f\in C_b(\Gamma)$.
\end{remark}

\begin{proof}
	We first treat the case when $E=\Gamma$.
	Let $q\in\pO$ and denote for any $p>1$
	\begin{equation}\label{eq:tmpfinite}
		\mathcal{M}_p f(q) = \sup_{\Delta \ni q} \left( \fint_{\Delta} |f|^p d\sigma \right)^{\frac{1}{p}}<\infty. 
	\end{equation} 
	We claim
	\begin{equation}\label{eq:claimN}
		|u(X)| \leq C \mathcal{M}_p f(q) \quad \text{ for any } X\in\Gamma(q).
	\end{equation}
	Hence $Nu(q) \leq C \mathcal{M}_p f(q)$, and thus by the $L^p$-boundedness ($p>1$) of Hardy-Littlewood maximal function (see \cite{SHT} for spaces of homogeneous type and \cite{Stein})
	\[ \|Nu\|_{L^p(\sigma)} \leq C \|\mathcal{M}f \|_{L^p(\sigma)} \lesssim \|f\|_{L^p(\sigma)}. \]
	
	In fact, let $X\in\Gamma(q)$ be fixed and $\Delta = \Delta(q, (1+\alpha)\delta(X) )$. For $j\in \mathbb{N}$ let $\Delta_j = 2^j \Delta$, and set $\Delta_{-1} = \emptyset$. We have
	\begin{align}\label{reptmp}
		u(X) = \int f d\omega^X = \sum_{j=0}^\infty \int_{ \Delta_{j} \setminus \Delta_{j-1}} f d \omega^X.
	\end{align}
	For each $j \in\mathbb{N}$ let $A_j$ denote a corkscrew point for $\Delta_j$. Recall Definition \ref{def:Ainfty} of $\omega\in A_{\infty}(\sigma)$ and the discussion after that, in particular \eqref{eq:kernelbydensity} and \eqref{eq:RHkernel}. We have that for each $j$, the Radon-Nikodym derivative
	\[ k^{A_j} (q') = \frac{d\omega^{A_j}}{d\sigma} (q') = \lim_{\Delta' \to q'} \frac{\omega^{A_j}(\Delta')}{\sigma(\Delta')}  \] 
	satisfies a reverse H\"older inequality 
	\begin{equation}\label{eq:RHq}
		\left( \fint_{\Delta_j} \left|k^{A_j} \right|^{r} d\sigma \right)^{\frac{1}{r}} \leq C \fint_{\Delta_j} k^{A_j} d\sigma
	\end{equation}
	for all $r\in (1,r_0)$,	with uniform constants $r_0>1$ and $C>0$. 
	For any $j\geq 2$ and any surface ball $\Delta' \subset \Delta_j \setminus \Delta_{j-1}$, by the H\"older regularity of solutions near the boundary (see Lemma \ref{lm:bdHolder}), we have
	\begin{equation}
		\omega^X(\Delta') \lesssim 2^{-j\beta} \omega^{A_{j-2}}(\Delta') \sim 2^{-j\beta} \omega^{A_j}(\Delta').
	\end{equation}
	Hence for any $q'\in \Delta_j \setminus \Delta_{j-1}$,
	\begin{equation}\label{eq:XtoAj}
		k^X(q') =  \lim_{\Delta' \to q'} \frac{\omega^X(\Delta')}{\sigma(\Delta')} =  \lim_{ \Delta' \to q' \atop{\Delta' \subset \Delta_j\setminus \Delta_{j-1}} } \frac{\omega^X(\Delta')} {\sigma(\Delta')} \lesssim 2^{-j\beta} \lim_{ \Delta' \ni q' \atop{\Delta' \subset \Delta_j\setminus \Delta_{j-1}} } \frac{\omega^{A_j}(\Delta')} {\sigma(\Delta')} = 2^{-j\beta} k^{A_j}(q').
	\end{equation}
	Therefore by \eqref{eq:RHq}, \eqref{eq:XtoAj}, and H\"older inequality for conjugates $1/p+1/r=1$ with $r\in (1,r_0)$, we obtain
	\begin{align}
		|u(X)| \leq \sum_{j=0}^\infty \int_{ \Delta_{j} \setminus \Delta_{j-1}} |f k^X| d \sigma & \lesssim \sum_{j=0}^\infty 2^{-j\beta} \int_{ \Delta_{j}} |f| k^{A_j} d \sigma \nonumber \\
		& \leq \sum_{j=0}^\infty 2^{-j\beta} \sigma(\Delta_j) \left( \fint_{ \Delta_{j}} |f|^p d\sigma \right)^{\frac{1}{p}} \left( \fint_{\Delta_j} | k^{A_j}|^r d \sigma \right)^{\frac{1}{r}} \nonumber \\
		& \lesssim \sum_{j=0}^\infty 2^{-j\beta} \sigma(\Delta_j) \left( \fint_{ \Delta_{j}} |f|^p d\sigma \right)^{\frac{1}{p}} \left( \fint_{\Delta_j} k^{A_j} d \sigma \right) \nonumber \\
		& \leq \sum_{j=0}^\infty 2^{-j\beta} \mathcal{M}_p f(q) \omega^{A_j}(\Delta_j) \nonumber \\
		& \lesssim \mathcal{M}_p f(q),\label{eq:kerneltmp}
	\end{align}
	thus we finish proving the claim \eqref{eq:claimN} for any $p\in (p_0, \infty)$, where $p_0$ is the conjugate of $r_0$. Note that we never use the continuity or compact support of $f$, and replacing $f$ by $f\chi_E$ we can repeat the same argument with no change. The assumption that $E$ is bounded or $f$ has compact support guarantees we still have a priori finite integrability in \eqref{eq:tmpfinite}.
\end{proof}

%%%%%%%%%%%%%%%%%%%%%%%%%%%%%%%%%%%%%%%%%
%previously subsection of $S<N$
%%%%%%%%%%%%%%%%%%%%%%%%%%%%%%%%%%%%%%%%%

\subsection{Proof of the BMO-solvability}

\begin{theorem}\label{thm:4.1}
	Assume that $\omega \in A_{\infty}(\sigma)$. For any $f\in C_0^0(\Gamma)$, let $u = Uf\in W_r(\Omega)$ be a solution to $Lu=0$ given by Lemmas \ref{lm:dfsol} and \ref{lm:dfhm}. Then $|\nabla u|^2 \delta(X)dm(X)$ is a Carleson measure, and moreover 
\begin{equation}
	\sup_{\Delta \subset \pO} \frac{1}{\sigma(\Delta )} \iint_{T(\Delta )}|\nabla u|^2 \delta(X) dm(X) \leq C\|f\|_{BMO(\sigma) }^2. \label{eq:CarlesonBMO}
\end{equation}
\end{theorem}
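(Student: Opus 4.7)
The plan is to localize and split the solution into a local and a far piece. Fix $\Delta = \Delta(q_0, r)$ and a large constant $K$ (to be chosen). Let $\Delta^{**} = \Delta(q_0, 4Kr)$, set $c = f_{\Delta^{**}}$, and decompose $f = c + g_1 + g_2$ with $g_1 = (f-c)\chi_{\Delta^{**}}$ and $g_2 = (f-c)\chi_{\Gamma \setminus \Delta^{**}}$. Correspondingly, write $u = c + u_2 + u_3$, where $u_2(X) = \int g_1\, d\omega^X$ is well defined by Lemma \ref{lm:fchiE} (its extension to $C_b(\Gamma)$-data on a bounded Borel set), and $u_3 = u - c - u_2$. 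By linearity $u_3(X) = \int g_2\, d\omega^X$; splitting $g_2 = f\chi_{\Gamma \setminus \Delta^{**}} - c \chi_{\Gamma \setminus \Delta^{**}}$ and invoking Lemmas \ref{lm:fchiE} and \ref{lm:hmassol} on each summand, one sees that $u_3$ is a solution that belongs to $W_r(B)$ with vanishing trace on $\Delta^{**}$ for any ball $B$ with $B \cap \Gamma \subset \Delta^{**}$. Since $\nabla c = 0$, we reduce to bounding $\iint_{T(\Delta)} |\nabla u_2|^2 \delta\, dm$ and $\iint_{T(\Delta)} |\nabla u_3|^2 \delta\, dm$ separately.

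For the local term, the equivalence \eqref{eq:CarlesonSquareU} gives $\iint_{T(\Delta)} |\nabla u_2|^2 \delta\, dm \leq \int_{C\Delta} |Su_2|^2\, d\sigma$ for some $C=C(\alpha)$. Choose $p > \max\{2, p_0\}$ with $p_0$ as in Theorem \ref{thm:Nu}; applying H\"older, Theorem \ref{thm:SlessthanN} ($\|Su_2\|_{L^p(\sigma)} \lesssim \|Nu_2\|_{L^p(\sigma)}$), Theorem \ref{thm:Nu} ($\|Nu_2\|_{L^p(\sigma)} \lesssim \|g_1\|_{L^p(\sigma)}$), and John--Nirenberg on $(\Gamma, \sigma)$ (which yields $\|g_1\|_{L^p(\sigma)} \lesssim \sigma(\Delta^{**})^{1/p}\|f\|_{BMO}$), one obtains
\[
\int_{C\Delta} |Su_2|^2\, d\sigma \lesssim \sigma(\Delta)^{1-2/p}\, \sigma(\Delta^{**})^{2/p}\, \|f\|_{BMO}^2 \lesssim \sigma(\Delta)\, \|f\|_{BMO}^2,
\]
since $\sigma(\Delta^{**}) \sim_K \sigma(\Delta)$ by Ahlfors regularity.

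For the far term, apply the boundary Caccioppoli inequality \eqref{eq:bdCcpl} (to $u_3^{\pm}$ separately) on $B = B(q_0, Kr)$, so that $2B \cap \Gamma = \Delta(q_0, 2Kr) \subset \Delta^{**}$. Combined with $\delta(X) \leq r$ for $X \in T(\Delta) \subset B$ and $m(2B) \lesssim (Kr)^{d+1}$ from Lemma \ref{lm:weight}, this gives
\[
\iint_{T(\Delta)} |\nabla u_3|^2 \delta\, dm \;\leq\; r \iint_B |\nabla u_3|^2\, dm \;\lesssim\; \frac{r}{(Kr)^2} \iint_{2B} u_3^2\, dm \;\lesssim_K\; \sup_{2B} |u_3|^2 \cdot r^d \;\sim\; \sup_{2B} |u_3|^2 \cdot \sigma(\Delta).
\]
Hence the proof reduces to showing the pointwise bound $\sup_{2B} |u_3| \lesssim \|f\|_{BMO}$.

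This pointwise estimate is the main obstacle. To establish it, set $D_j = 2^j \Delta^{**}$, $c_j = f_{D_j}$, and write
\[
u_3(X) = \sum_{j\geq 1}\left[(c_j - c)\,\omega^X(D_j \setminus D_{j-1}) + \int_{D_j \setminus D_{j-1}}(f - c_j)\, d\omega^X\right].
\]
The standard BMO telescoping gives $|c_j - c| \lesssim j\|f\|_{BMO}$. The boundary H\"older regularity (Lemma \ref{lm:bdHolder}) applied to the non-negative solution $Y \mapsto \omega^Y(\Gamma \setminus D_{j-1})$, which has vanishing trace on $D_{j-1}$, yields the decay $\omega^X(\Gamma \setminus D_{j-1}) \lesssim 2^{-j\beta}$ for all $X \in 2B$ and $j \geq 2$, which handles the first sum. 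For the second sum, the same H\"older argument applied to $Y \mapsto \omega^Y(\Delta')$ with $\Delta' \subset D_j \setminus D_{j-1}$, together with interior Harnack chains comparing values at $X \in 2B$ and at the corkscrew $A_j = A_{D_j}$, produces the pointwise kernel comparison
\[
k^X(q') \lesssim 2^{-j\beta}\, k^{A_j}(q'), \qquad q' \in D_j \setminus D_{j-1},
\]
exactly as in the proof of Theorem \ref{thm:Nu} (see the derivation leading to \eqref{eq:XtoAj}). Inserting this comparison, applying H\"older's inequality with exponent $r \in (1, r_0)$, and invoking the reverse H\"older inequality \eqref{eq:RHkernel} for $k^{A_j}$ (which is where the assumption $\omega \in A_\infty(\sigma)$ is used quantitatively) together with $(\fint_{D_j} |f - c_j|^{r'} d\sigma)^{1/r'} \lesssim \|f\|_{BMO}$ from John--Nirenberg yields
\[
\int_{D_j \setminus D_{j-1}} |f - c_j|\, d\omega^X \lesssim 2^{-j\beta}\,\omega^{A_j}(D_j)\,\|f\|_{BMO} \lesssim 2^{-j\beta}\|f\|_{BMO}.
\]
Summing the geometric series $\sum_j (j+1) 2^{-j\beta}\|f\|_{BMO} \lesssim \|f\|_{BMO}$ closes the pointwise bound and, together with the local estimate, completes the proof.
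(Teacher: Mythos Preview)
Your proof is correct and follows the same overall strategy as the paper: the decomposition $f = c + g_1 + g_2$ and the treatment of the local piece $u_2$ via \eqref{eq:CarlesonSquareU}, Theorem~\ref{thm:SlessthanN}, Theorem~\ref{thm:Nu}, and John--Nirenberg are identical to the paper's handling of its $u_1$.

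The one genuine difference lies in the far piece. The paper applies the \emph{interior} Caccioppoli inequality on Whitney cubes, which produces the weighted integral $\iint_{T(\frac{3}{2}\Delta)} |u_2|^2 \delta(X)^{-1}\,dm$; the singular factor $\delta(X)^{-1}$ then forces the paper to upgrade the crude pointwise bound $v(X)\lesssim \|f\|_{BMO}$ to the H\"older decay $v(X)\lesssim (\delta(X)/r)^\beta\|f\|_{BMO}$ via Lemma~\ref{lm:bdHolder}, and finally to invoke Lemma~\ref{lm:delta} to integrate $\delta(X)^{2\beta-1}$. Your route applies the \emph{boundary} Caccioppoli inequality once on the large ball $B(q_0,Kr)$, which immediately yields $\iint_{T(\Delta)}|\nabla u_3|^2\delta\,dm \lesssim_K r^d \sup_{2B}|u_3|^2$ and therefore only requires the crude $L^\infty$ bound $\sup_{2B}|u_3|\lesssim\|f\|_{BMO}$. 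This is a legitimate simplification: you bypass both the H\"older refinement \eqref{eq:vHol} and Lemma~\ref{lm:delta} entirely. (One cosmetic point: the boundary Caccioppoli lemma in the paper is stated for non-negative subsolutions, so rather than writing ``apply to $u_3^{\pm}$'' it is cleaner either to note that the inequality holds for solutions of arbitrary sign, or to split $u_3$ into two non-negative solutions as the paper does explicitly for its $u_2^{\pm}$.) The pointwise estimate itself---dyadic annular decomposition, BMO telescoping, the kernel comparison \eqref{eq:XtoAj}, and reverse H\"older---matches the paper's derivation of \eqref{eq:vBMObd}.
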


\begin{proof}
	Fix an arbitrary surface ball $\Delta=\Delta(q_0,r)$. Let $\alpha>0$. Denote the constant $c=\max\{\alpha+2, 12\}$ and let $\dt=c\Delta = \Delta(q_0, cr)$ be a concentric dilation. 
We denote the average $ f_{\dt} = \fint_{\dt} f d\sigma$. 
Let 
\[ f_1= (f-f_{\dt})\chi_{\dt}, \quad f_2 = (f-f_{\dt})\chi_{\pO\setminus\dt}, \quad f_3 = f_{\dt}, \]
and for any $X\in\Omega$ let
\[ u_1(X) = \int_{\pO} f_1 d\omega^X = \int_{\dt} \left(f-f_{\dt} \right) d\omega^X,  \]
\[ u_2(X) = \int_{\pO} f_2 d\omega^X =  \int_{\pO\setminus \dt} \left( f-f_{\dt} \right) d\omega^X = \int_{\pO \setminus \dt} f d\omega^X  - f_{\dt} \omega^X(\pO\setminus \dt), \]
\[  u_3 \equiv f_{\dt}. \]
By Lemmas \ref{lm:dfsol}, \ref{lm:dfhm}, \ref{lm:hmassol} and \ref{lm:fchiE}, they are solutions to $L$, and $u_1, u_2$ can be continuously extended to $\pO\setminus \dt$ and $\dt$, respectively. Moreover
\[ \left(u_1 + u_2 + u_3 \right) (X) = \int_{\pO} f d\omega^X = Uf(X) = u(X). \]
Clearly the Carleson measure of the constant function $u_3$ is trivial. 

Apply Theorem \ref{thm:Nu} to $f_1$ and $u_1$ we get $\|Nu_1\|_{L^p(\sigma)} \leq C\|f_1\|_{L^p(\sigma)}<\infty$. Combined with Theorem \ref{thm:SlessthanN}, we get
\begin{equation}
	\|S u_1\|_{L^p(\sigma)} \lesssim \|Nu_1\|_{L^p(\sigma)} \lesssim \|f_1\|_{L^p(\sigma)} = \left(\int_{\dt} |f-f_{\dt}|^p d\sigma\right)^{1/p}\label{eq:SquareLp}
\end{equation} 
for any $p\in (p_0,\infty)$.
By \eqref{eq:CarlesonSquarepre} and \eqref{eq:CarlesonSquareU}
\[ \iint_{T(\Delta)} \Carl{u_1} dm(X) \leq C\int_{(\alpha+2)\Delta} |S_{(\alpha+1) r} u_1|^2 d\sigma\]
Recall that $\dt = c\Delta \supset (\alpha+2)\Delta$, thus
\begin{align}
	\iint_{T(\Delta)} \Carl{u_1} dm(X) & \leq C \int_{\dt} |S_{(\alpha+1) r} u_1|^2 d\sigma \nonumber \\
	& \leq C \sigma(\dt)^{1-\frac 2 p} \left(\int_{\dt} |S u_1|^p d\sigma\right)^{\frac{2}{p}} \nonumber \\
	& \leq C \sigma(\dt)^{1-\frac 2 p} \|S u_1\|_{L^p(\sigma)}^2, \label{eq:SquareHolder}
\end{align} 
for any $p> \max\{2,p_0\}$.
Combining \eqref{eq:SquareHolder} and \eqref{eq:SquareLp} we get
\begin{align}
	\iint_{T(\Delta)} \Carl{u_1} dm(X) \leq C\sigma(\Delta) \|f\|_{BMO(\sigma)}^2<\infty. \label{uone}
\end{align}

Turning to the estimate for $u_2$, let $\{I_k\} \subset \WW$ be a collection of dyadic Whitney boxes that intersect of $T(\Delta)$ (recall the properties of Whitney decomposition $\WW$ in \eqref{Wbox}). On each Whitney box $I_k$, we have by the interior Cacciopoli inequality \eqref{eq:intCcpl}
\begin{align*}
	 \iint_{I_k} \Carl{u_2} dm(X) & \lesssim \ell(I_k)\iint_{I_k} |\nabla u_2|^2 dm(X) \\
	 & \lesssim \ell(I_k)\cdot \frac{1}{\ell(I_k)^2} \iint_{I_k^*} |u_2(X)|^2 dm(X) \\
	 & \lesssim \iint_{I_k^*} \frac{|u_2(X)|^2}{\delta(X)} dm(X),
\end{align*}
Recall $I_k^* = (1+\theta) I_k$ is the dilation of $I_k$ satisfying \eqref{eq:Wboxdl}.
Then summing up we get
\begin{align}\label{eq:utwo}
	\iint_{T(\Delta)} \Carl{u_2} dm(X) & \lesssim \sum_k \iint_{ I_k^*} \frac{|u_2(X)|^2}{\delta(X)} dm(X) \nonumber \\
	& \lesssim \iint_{T\left(\frac 3 2 \Delta \right)} \frac{|u_2(X)|^2}{\delta(X)} dm(X).
\end{align}
In the last line we use the finite overlap of $\{I_k^*\}$, and the fact that by taking $\theta$ sufficiently small, we can ensure that $I^*_k \subset T(\frac{3}{2} \Delta)$ for all $I_k$ intersects $T(\Delta)$.
Recall that $\frac{3}{2}\Delta= \Delta(q_0, \frac{3}{2}r)$ and $T(\frac{3}{2}\Delta)$ denotes $B(q_0, \frac{3}{2} r)\cap\Omega$.

Let $f_2^{\pm}$ denote the positive and negative part of $f_2$, and let $u_2^{\pm} = \int_{\pO \setminus \dt} f_2^{\pm} d\omega^X \geq 0$. There is a technical issue that $f_2^{\pm} \notin C_0^0(\Gamma)$, however by splitting $u_2^{\pm}$ as follows,
\[ u_2^{+}(X) = \int_{\{f\geq f_{\dt}\} \setminus \dt } f d\omega^X - f_{\dt} \omega^X \left( \{f\geq f_{\dt}\} \setminus \dt \right), \]
\[ u_2^{-}(X) = -\int_{\{f< f_{\dt}\} \setminus \dt } f d\omega^X + f_{\dt} \omega^X \left( \{f < f_{\dt}\} \setminus \dt \right), \]
we can confirm by combining Lemmas \ref{lm:hmassol} and \ref{lm:fchiE} that $u_2^{\pm} \in W_r(\Omega)$ are indeed legitimate solutions of $L$, and they can be continuously extended to $\dt$ by zero.
By the linearity of integration, we have $u_2 = \int_{\pO} f_2 d\omega^X =  u_2^+ - u_2^-$.  
Let $v(X) : = u_2^+(X) + u_2^-(X)$, again by linearity we have
\begin{equation}\label{def:v}
	v(X) = \int_{\pO} |f_2| d\omega^X = \int_{\pO\setminus \dt} |f-f_{\dt}| d\omega^X.
\end{equation} 
Thus $|u_2(X)| \leq v(X)$ for all $X\in\Omega$. Moreover by the properties of $u_2^{\pm}$, we know that $v\in W_r(\Omega)$ is a solution of $L$, $Tv = 0 $ on $\dt$ and that $v\in W_r(B(q_0,cr))$. (Recall that $\dt =c\Delta =  B(q_0,cr) \cap \pO $.)
We claim that
\begin{equation}\label{eq:vBMObd}
	v(X) \leq C \|f\|_{BMO(\sigma)} \quad \text{ for all } X\in T(6\Delta).
\end{equation}	
	By the definition \eqref{def:v}, the function $v$ vanishes on $\dt$. Note that $\dt\supset 12\Delta$ by the choice of $\dt$, $v\in W_r(B(q_0,12r))$ is a non-negative solution in $T(12\Delta)$ and $Tv \equiv 0$ on $12\Delta$. Let $A$ be a corkscrew point for $T(12\Delta)$, by the boundary Harnack inequality \eqref{eq:bdHnk}
	\[ v(X) \leq C v(A),\quad\text{for all~} X\in T\left( 6\Delta\right). \]
	For any $j\in\mathbb{N}$, let $A_j$ be a corkscrew point for the surface ball $2^j \dt$.
	Similar to \eqref{eq:kerneltmp}, we get
	\begin{align}
		v(A) & \lesssim \sum_{j=1}^{\infty} 2^{-j\beta} \int_{2^j\dt\setminus 2^{j-1}\dt} |f-f_{\dt}| k^{A_j} d\sigma \nonumber \\
		& \leq \sum_{j=1}^{\infty} 2^{-j\beta} \left(\fint_{2^j\dt} |f-f_{\dt}|^p d\sigma \right)^{\frac{1}{p}} \left( \fint_{2^j\dt} \left| k^{A_j} \right|^r d\sigma \right)^{\frac{1}{r}} \sigma(2^j\dt) \nonumber \\
		& \lesssim \sum_{j=1}^{\infty} 2^{-j\beta} \|f\|_{BMO(\sigma)} \omega^{A_j}(2^j\dt) \nonumber \\
		& \lesssim \|f\|_{BMO(\sigma)}.
	\end{align}
	Here $p$ is a conjugate to $r$. We conclude the proof of \eqref{eq:vBMObd}. 
	
	Next, we show a finer estimate based off \eqref{eq:vBMObd}, which is
	\begin{equation}\label{eq:vHol}
		v(X) \leq C\left(\dfrac{\delta(X)}{r}\right)^\beta \|f\|_{BMO(\sigma)} \quad \text{ for all } X\in T\left(\frac{3}{2}\Delta \right),
	\end{equation}  
	where $\beta\in (0,1]$ is the exponent from Lemma \ref{lm:bdHolder}.
	To this end, for any $X\in T(\frac{3}{2}\Delta)$, let $q_X$ be a boundary point such that $|X-q_X|=\delta(X)$. Note that
	\[|X-q_X|=\delta(X) \leq |X-q_0| < \frac{3}{2}r , \]
	i.e. $X\in B(q_X,3r/2)\cap\Omega$. Note also
	\[ |q_X-q_0| \leq |q_X-X|+|X-q_0| <\frac{3r}{2} + \frac{3r}{2} =3r, \]
	so $\overline{B\left(q_X,3r\right)} \subset B(q_0, 6r)$.
	Since $\dt \supset 6\Delta \supset \Delta(q_X, 3r)$, $v\in W_r(B(q_X,3r))$ is a non-negative solution in $B(q_X, 3r)\cap\Omega$ and $Tv \equiv 0$ on $\Delta(q_X, 3r)$. By the boundary H\"older regularity \eqref{eq:bdHolder} and the first part of this lemma \eqref{eq:vBMObd}, we conclude
	\begin{align*}
		v(X) & \lesssim \left( \frac{|X-q_X|}{3r}\right)^{\beta} \left( \frac{1}{m\left(B(q_X,3r)\right)} \iint_{B(q_X,3r)\cap\Omega} |v|^2 dm \right)^{\frac{1}{2}} \\
		& \lesssim \left( \frac{\delta(X)}{r}\right)^{\beta} \sup_{T(6\Delta)} v   \lesssim \left( \frac{\delta(X)}{r}\right)^{\beta} \|f\|_{BMO(\sigma)}.
	\end{align*}

Combining \eqref{eq:vHol} and \eqref{eq:utwo}, we get
\begin{equation}
	\iint_{T(\Delta)} \Carl{u_2} dm(X) \lesssim \dfrac{\|f\|_{BMO(\sigma)}^2}{r^{2\beta}}\left( \iint_{T \left(\frac{3}{2}\Delta \right)} \delta(X)^{2\beta - 1} dm(X) \right) . \label{eq:utwoBMO}
\end{equation} 
Since $2\beta-1>-1$, we can use Lemma \ref{lm:delta} with exponent $\alpha=2\beta-1$ to get
\begin{equation}
	\iint_{T(\Delta)} \Carl{u_2} dm(X) \lesssim r^{d} \|f\|_{BMO(\sigma)}^2 \lesssim \sigma(\Delta)\|f\|_{BMO(\sigma)}^2. \label{utwo}
\end{equation} 
Combining \eqref{uone} and \eqref{utwo} finishes the proof.
\end{proof}

%%%%%%%%%%%%%%%%%%%%%%%%%%%%%%%%%%%%%%%%%%%%%%%%%%%%%%%%%%%%%%%%%%%%%%%%%%%%%%%%%%%%%%%%%%%%
\subsection{From BMO-solvability to $\omega \in A_{\infty}(\sigma)$}\label{sect:showAinfty}

In this subsection, we prove the other half of Theorem \ref{thm:main}:
\begin{theorem}\label{thm:5.1}
	Assume that for any $f\in C_0^0(\Gamma)$, the solution $u = Uf\in W_r(\Omega)$ given by Lemmas \ref{lm:dfsol} and \ref{lm:dfhm} satisfies the property that $|\nabla u|^2 \delta(X)dm(X)$ is a Carleson measure with 
\begin{equation}
	\sup_{\Delta \subset \pO} \frac{1}{\sigma(\Delta )} \iint_{T(\Delta )}|\nabla u|^2 \delta(X) dm(X) \leq C\|f\|_{BMO(\sigma) }^2. \label{eq:CarlesonBMO}
\end{equation}
Then $\omega \in A_\infty(\sigma)$, with the implicit constant depending on $d, n, C_0, C_1$ and the above constant $C$.
\end{theorem}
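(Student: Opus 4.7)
The plan is to derive the $A_\infty$ property via a testing argument: apply the Carleson estimate \eqref{eq:CarlesonBMO} to boundary data of the form $\chi_F$ and extract a reverse H\"older inequality for the kernel of $\omega^A$ with respect to $\sigma$. First I would extend \eqref{eq:CarlesonBMO} from $f\in C_0^0(\Gamma)$ to bounded Borel boundary data. Given a Borel set $F\subset\Gamma$, approximate $\chi_F$ by $f_k\in C_0^0(\Gamma)$ with $0\leq f_k\leq 1$, $\|f_k\|_{BMO(\sigma)}$ uniformly bounded (in fact $\leq \sqrt{2}+o(1)$), and $f_k\to \chi_F$ pointwise $\sigma$-a.e., where $f_k$ is constructed by regularization in the homogeneous space $(\Gamma,\sigma)$. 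By Lemma \ref{lm:fchiE} the solutions $u_k=Uf_k$ converge pointwise in $\Omega$ to $u(X)=\omega^X(F)$, and a standard weak compactness argument yields $\nabla u_k\rightharpoonup \nabla u$ in $L^2_{loc}(\Omega,dm)$. Fatou's lemma applied to \eqref{eq:CarlesonBMO} then gives $\iint_{T(\Delta)}|\nabla u|^2\delta\,dm \leq 2C\,\sigma(\Delta)$ for every surface ball $\Delta$.

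Second, fix a surface ball $\Delta_0=\Delta(q_0,r)$, its corkscrew point $A_0$, and a Borel set $F\subset\Delta_0$. Setting $u(X)=\omega^X(F)$, the main claim is the $L^2$-reverse H\"older inequality
\[
\bigl(\omega^{A_0}(F)\bigr)^2 \;\leq\; C\,\frac{\sigma(F)}{\sigma(\Delta_0)}\,\bigl(\omega^{A_0}(\Delta_0)\bigr)^2,
\]
which by classical self-improvement of $B_2$-weights on the space of homogeneous type $(\Gamma,\sigma)$ implies $\omega\in A_\infty(\sigma)$. To prove it I would integrate $u^2\,G\,\psi$ by parts, where $G(X)=G(X_*,X)$ is the Green's function with pole at a corkscrew point $X_*$ of a large dilate $C\Delta_0$, and $\psi$ is a smooth cutoff of $T(C\Delta_0)$. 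Mimicking the computation \eqref{eq:right}--\eqref{eq:estimateIIP} from the proof of Proposition \ref{lm:goodlambdaomega0}, but with a tent cutoff in place of the sawtooth cutoff and the boundary identity $u|_{\Gamma}=\chi_F$, one obtains an identity of the form
\[
\tfrac12\,\omega^{X_*}(F) \;=\; \iint_\Omega |\nabla u|^2 G\psi\,dm \;+\; (\text{boundary corrections on }\supp \nabla\psi).
\]
The correction terms are $O(\omega^{X_*}(\Delta_0))$ by the boundary Cacciopoli inequality together with $|u|\leq 1$. The main Carleson-weighted term is controlled by combining the Carleson bound of Step 1 with the CFMS estimate $G(X_*,X)\,\delta(X)^{d-1}\sim \omega^{X_*}(\Delta^X)$ of Lemma \ref{lm:CFMS}, and by the boundary H\"older decay of $u$ on Whitney boxes $I^*$ whose projections lie in $\Delta_0\setminus F$: by Lemma \ref{lm:bdHolder} applied with the boundary trace $\chi_F$, the function $u$ decays like $(\delta(X)/\dist(X,F))^{\beta}$ on such $I^*$, so the Whitney decomposition of $T(C\Delta_0)$ effectively concentrates on cubes near $F$, producing the required $\sigma(F)$ factor. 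Change-of-pole (Lemma \ref{lm:cop}) translates the bound from $X_*$ to $A_0$.

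The main obstacle is the localization from $\sigma(\Delta_0)$ to $\sigma(F)$: testing \eqref{eq:CarlesonBMO} with $\chi_F$ directly produces only a bound proportional to $\sigma(\Delta_0)$, and the gain of the factor $\sigma(F)/\sigma(\Delta_0)$ must be extracted by exploiting that $u$ vanishes on $\Delta_0\setminus F$ in the non-tangential sense. In the lower-dimensional setting this requires a careful Whitney-box accounting that uses the ADR dyadic structure (Lemma \ref{lm:dcAR}), the doubling (Lemma \ref{lm:doubling}) and change-of-pole (Lemma \ref{lm:cop}) properties of $\omega$, and the boundary H\"older decay (Lemma \ref{lm:bdHolder}). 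An alternative route, likely cleaner, is to argue by contradiction: if $A_\infty$ failed, one would find surface balls $\Delta_k'\subset\Delta_k$ and Borel sets $E_k\subset\Delta_k'$ with $\sigma(E_k)/\sigma(\Delta_k')\to 0$ but $\omega^{A_k}(E_k)/\omega^{A_k}(\Delta_k')\geq \eta_0$; a dyadic stopping-time construction on Lemma \ref{lm:dcAR} then assembles $f=\sum_k c_k\chi_{E_k}$ with $c_k\sim (\sigma(\Delta_k')/\sigma(E_k))^{1/2}$ separated across scales, whose BMO norm is uniformly bounded by a packing/Carleson condition, but whose solution $Uf$ has $\iint|\nabla Uf|^2\delta\,dm=\infty$ on $T(\Delta_0)$, contradicting \eqref{eq:CarlesonBMO}. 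Either route requires controlling the interaction of scales, which is the crux of the proof.
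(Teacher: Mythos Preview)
Your approach diverges substantially from the paper's and has a genuine gap at precisely the point you flag as the ``main obstacle.'' Testing \eqref{eq:CarlesonBMO} against $u=\omega^X(F)$ uses only $\|\chi_F\|_{BMO(\sigma)}\lesssim 1$, independent of $\sigma(F)$, so the Carleson bound yields $\iint_{T(\Delta)}|\nabla u|^2\delta\,dm\lesssim\sigma(\Delta)$ with no $\sigma(F)$ gain. Your remedy via boundary H\"older decay on Whitney boxes over $\Delta_0\setminus F$ does not close the argument: that decay controls $u$, not the Green-weighted integral $\iint|\nabla u|^2 G\psi$, and the weight $G$ does not localize to $F$. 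Separately, in the identity $\tfrac12\,\omega^{X_*}(F)=\iint|\nabla u|^2 G\psi+\text{(corrections)}$ you estimate the corrections only by $O(\omega^{X_*}(\Delta_0))$, which by Lemma~\ref{lm:nondeg} is $O(1)$; this renders any bound on $\omega^{X_*}(F)$ vacuous regardless of how sharply you control the main term. Your intermediate target $(\omega^{A_0}(F))^2\le C\,\sigma(F)/\sigma(\Delta_0)\,(\omega^{A_0}(\Delta_0))^2$ is also too ambitious: it is an $RH_2$-strength statement, whereas BMO solvability is only equivalent to $A_\infty$ (i.e.\ $RH_q$ for \emph{some} $q>1$), so in general it cannot hold with a constant depending only on the data in the theorem. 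The contradiction route you sketch is plausible in spirit, but the packing and scale-separation claims for $f=\sum c_k\chi_{E_k}$ are not substantiated.

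The paper's argument is completely different and bypasses these issues. It first establishes (Lemma~\ref{lm:BMOprelim}) that for non-negative $f\in C_0^0(\Gamma)$ supported in $\Delta$ one has $\int_\Delta f\,d\omega^A\le C\,\|f\|_{BMO(\sigma)}$: one applies the Carleson hypothesis on a ball $\Delta'$ of the same radius \emph{disjoint} from $\supp f$ (so $Uf$ vanishes on $3\Delta'$), passes to the square function via \eqref{eq:CarlesonSquareL}, and then uses the Poincar\'e-type inequality of Lemma~\ref{lm:Poincare} to bound $S_{r/2}u$ from below by $u$ at a corkscrew point; Harnack transfers this to $A$. The key step for $A_\infty$ is then to test not with $\chi_E$ but with
\[
f(q)=\max\bigl\{0,\,1+\delta\log M_\sigma\chi_E(q)\bigr\},
\]
which satisfies $f\ge\chi_E$, $\|f\|_{BMO(\sigma)}\lesssim\delta$, and $\supp f\subset 2\Delta$ once $\sigma(E)/\sigma(\Delta)\lesssim e^{-1/\delta}$. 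After mollifying to land in $C_0^0(\Gamma)$, Lemma~\ref{lm:BMOprelim} gives $\omega^A(E)\le\int f\,d\omega^A\lesssim\delta$, which is the required $A_\infty$ smallness. The idea you are missing is this logarithm-of-maximal-function test function, which converts ``$\sigma(E)$ small'' into ``$\|f\|_{BMO}$ small'' while keeping $f\ge\chi_E$; that is what makes the Carleson hypothesis bite.
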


Let us start with proving the following Lemma.
\begin{lemma}\label{lm:BMOprelim}
	Suppose the Dirichlet problem \eqref{ellp} is BMO-solvable. Then any non-negative function $f\in C_0^0(\Gamma)$ whose supporte is contained in a surface ball $\Delta$ satisfies
	\begin{equation}\label{eq:BMO}
	 \int_{\Delta}f d\omega^A \leq C \|f\|_{BMO(\sigma)}.
	\end{equation} 
	Here $A$ is a corkscrew point for $\Delta$.
\end{lemma}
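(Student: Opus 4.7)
The plan is to bound $u(A) = \int_\Gamma f\,d\omega^A$ by locating a good boundary point $q^\ast \in \Delta$ at which both $f(q^\ast)$ and a truncated square function $S_{r/2}u(q^\ast)$ are $O(\|f\|_{BMO})$, and then transferring the estimate from $q^\ast$ to the interior point $A$ by a Harnack-chain / Poincar\'e telescoping argument.

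First I would observe that the hypothesis $f\geq 0$ and $\supp f\subset \Delta$ force $f_\Delta \leq C\|f\|_{BMO}$. Since $f\equiv 0$ on $2\Delta\setminus \Delta$ and $f_{2\Delta} = f_\Delta\,\sigma(\Delta)/\sigma(2\Delta)$, the BMO oscillation of $f$ on $2\Delta$ yields
\[
 f_\Delta\,\sigma(\Delta)\;\sim\; \int_{2\Delta\setminus \Delta}f_{2\Delta}\,d\sigma \;\leq\; \int_{2\Delta}|f-f_{2\Delta}|\,d\sigma \;\leq\; C\sigma(\Delta)\,\|f\|_{BMO}.
\]
Next, applying the BMO-solvability hypothesis \eqref{eq:CarlesonBMO} to $u=Uf$ gives the Carleson estimate $\iint_{T(\Delta)}|\nabla u|^2\delta\,dm \leq C\sigma(\Delta)\|f\|_{BMO}^2$. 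By the equivalence between this Carleson quantity and the truncated square function established at the end of Section \ref{sect:prelim}, this translates to $\int_{\Delta/2}|S_{r/2}u|^2\,d\sigma \leq C\sigma(\Delta)\|f\|_{BMO}^2$. Combining Chebyshev applied to this $L^2$ bound and to $\int_\Delta f\,d\sigma = \sigma(\Delta)f_\Delta \leq C\sigma(\Delta)\|f\|_{BMO}$, I can select $q^\ast \in \Delta/2$ satisfying simultaneously
\[
 S_{r/2}u(q^\ast)\;\leq\; C\|f\|_{BMO}, \qquad f(q^\ast)\;\leq\; C f_\Delta\;\leq\; C\|f\|_{BMO}.
\]

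By Lemma \ref{lm:dfsol}, $u=Uf \in C(\RR^n)$, so $u(q^\ast)=f(q^\ast)$. The heart of the proof is then to show that $|u(A)-u(q^\ast)|\lesssim \|f\|_{BMO}$. I would construct a non-tangential dyadic chain $A=X_0,X_1,X_2,\ldots$ along a cone at $q^\ast$ with $\delta(X_j)\sim 2^{-j}r$, and let $B_j$ be a Whitney ball centered at $X_j$. By the ball Poincar\'e inequality (Lemma \ref{lm:ballPoincare}) applied to $u$ on bridge balls joining $B_j$ to $B_{j+1}$, each squared difference $|u_{B_j}-u_{B_{j+1}}|^2$ is controlled by $Cr_j^2\fint_{B_j^\ast}|\nabla u|^2\,dm$, which using Lemma \ref{lm:weight} is proportional to the partial piece $\iint_{B_j^\ast}|\nabla u|^2\delta^{2-n}\,dX$ of the square function $|S_{r/2}u(q^\ast)|^2$. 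Summing over $j$ and combining with interior Harnack inequality to replace $u_{B_j}$ by $u(X_j)$, the $\ell^2$-sum of consecutive differences is bounded by $C|S_{r/2}u(q^\ast)|^2$. This together with continuity of $u$ at $q^\ast$ should give the pointwise conclusion $|u(A)-u(q^\ast)|\leq C\|f\|_{BMO}$, and hence $u(A)\leq C\|f\|_{BMO}$.

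The main obstacle is extracting a pointwise bound on $|u(A)-u(q^\ast)|$ from the $\ell^2$ control of the telescoping: a direct Cauchy--Schwarz introduces a factor of $\sqrt{N}$, where $N$ is the number of scales traversed, and $N\to\infty$ as the chain approaches $q^\ast$. My preferred workaround is to use the $C(\overline{\Omega})$-continuity of $u$ from Lemma \ref{lm:dfsol} to absorb the deep tail of the telescoping qualitatively, while keeping the $\ell^2$ square-function estimate for the bulk of the chain. An alternative, potentially more robust, is to set $F:=\{q\in \Delta : f(q)+S_{r/2}u(q)\leq C\|f\|_{BMO}\}$, build a sawtooth subdomain $\Omega_F$ via Lemma \ref{lm:stcutoff}, and apply a Green's-identity argument for $u$ with pole at $A$ inside $\Omega_F$: the boundary $F\subset\Gamma$ contributes $\lesssim \|f\|_{BMO}$, while the interior sawtooth boundary terms are absorbed by the uniform bound $Su\leq C\|f\|_{BMO}$ on $F$, completely bypassing the pointwise telescoping.
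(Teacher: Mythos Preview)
Your proposal has a genuine gap that you correctly identify but do not resolve. The telescoping step controls $\sum_j |u_{B_j}-u_{B_{j+1}}|^2$ by $|S_{r/2}u(q^\ast)|^2$, but $|u(A)-u(q^\ast)|$ requires the $\ell^1$ sum $\sum_j |u_{B_j}-u_{B_{j+1}}|$, and Cauchy--Schwarz indeed loses the factor $\sqrt{N}$ with $N\to\infty$. Your first workaround, invoking the $C(\overline\Omega)$-continuity of $u$ ``qualitatively'' for the deep tail, cannot work: the depth $J$ at which $|u(X_J)-f(q^\ast)|<\epsilon$ depends on the particular $f$, not just on $\|f\|_{BMO}$, so the resulting bound is not uniform. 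Your second workaround, a Green-identity argument on a sawtooth over $F$, runs into the very obstacle the paper flags in its introduction: in this higher-codimension setting the sawtooth boundary mixes $d$-dimensional and $(n-1)$-dimensional pieces, and one cannot make sense of harmonic measure or the Dirichlet problem on it. Lemma~\ref{lm:stcutoff} only provides a cutoff function and an estimate for the harmonic measure of the \emph{ambient} $\Omega$; it does not give you a harmonic measure for $\Omega_F$ with pole at $A$.

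The paper's proof avoids all of this with a single clean idea you are missing: instead of working on $\Delta$ (where $u$ does not vanish), it picks a companion ball $\Delta'$ of the same radius, located $2r$ away from $\Delta$, so that $Tu\equiv 0$ on $3\Delta'$. On such a ball the Poincar\'e-type inequality of Lemma~\ref{lm:Poincare} (which requires the solution to vanish on the boundary) applies and yields directly $|S_{r/2}u(q)|^2 \gtrsim u^2(A')$ for \emph{every} $q\in\Delta'$, where $A'$ is a corkscrew point for $\Delta'$. Integrating over $\Delta'/2$ and combining with the Carleson bound on $T(\Delta')$ gives $u(A')\lesssim\|f\|_{BMO}$, and Harnack transfers this to $u(A)$. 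The move to $\Delta'$ is exactly what converts the $\ell^2$ square-function control into a pointwise bound on $u$, by substituting the vanishing boundary condition for the endpoint value $f(q^\ast)$ you were trying to subtract.
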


\begin{proof}

Since $f\in C_0^0(\Gamma)$ is a non-negative function, by Lemma \ref{lm:dfsol} $u= Uf\in W_r(\Omega)$ is a non-negative solution of $L$. Suppose $\Delta$ has radius $r$. Consider another surface ball $\Delta'=B(q',r) \cap \Gamma$ of the same radius $r$ and which is $2r-$distance away from $\Delta$. Thus in particular, $Tu = 0 $ on $3\Delta'$ and that $u\in W_r(B(q',3r))$, by Lemma \ref{lm:dfsol} (i) and (iv).
Applying the BMO-solvability assumption to $u=Uf$ and the surface ball $\Delta'$, we have
\begin{equation}
	\iint_{T(\Delta')} \Carl{u} dm(X)\leq C\sigma(\Delta')\|f\|_{BMO(\sigma)}^2 \label{eq:Carlcond}
\end{equation}

We have shown in \eqref{eq:CarlesonSquareL} that
\begin{equation} \label{eq:CarlSquaregeq}
	\iint_{T(\Delta')}\Carl{u} dm(X) \gtrsim\int_{\Delta'/2} |S_{r/2} u|^2 d\sigma,
\end{equation}
where $S_{r/2}u$ is the truncated square function of aperture $\overline{\alpha}>\alpha$, whose value is determined in Lemma \ref{lm:Poincare} and only depends on $n, d, C_0, C_1$ and $\alpha$.
In order to get a lower bound of the square function $S_{r/2}u$, we decompose the non-tangential cone $\Gamma_{r/2}(q)$ into stripes as in \eqref{def:stripe} and use the Poincar\'e-type inequality proved in Lemma \ref{lm:Poincare} for surface ball $\Delta'$.
Let $m_1, m_2$ be integers determined in Lemma \ref{lm:Poincare}. We obtain
\begin{align*}
	|S_{r/2} u|^2 (q) & = \iint_{\Gamma^{\overline \alpha}_{r/2} (q)} |\nabla u|^2 \delta(X)^{1-d} dm(X) \\
	& \geq \frac{1}{m_1+m_2} \sum_{j=m_1+1}^{\infty} \iint_{\Gamma^{\overline\alpha}_{j-m_1 \rightarrow j + m_2}(q)} |\nabla u|^2 \delta(X)^{1-d} dm(X) \\
	& \gtrsim \sum_{j=m_1+1}^{\infty}(2^{-j}r)^{1-d} \iint_{\Gamma^{\overline \alpha}_{j-m_1\rightarrow j+m_2 } (q)} |\nabla u|^2 dm(X) \\
	& \gtrsim \sum_{j=m_1+1}^{\infty} (2^{-j}r)^{1-d}\cdot (2^{-j}r)^{-2} \iint_{\Gamma_j^{\alpha}(q)} u^2 dm(X) \\
	& \gtrsim \sum_{j=m_1+1 }^{\infty} u^2(A_j),
\end{align*}
where $A_j\in \Gamma_j(q)$ is a corkscrew point at the scale $2^{-j}r$.
In the last inequality, we use the interior corkscrew condition, as each stripe of cone $\Gamma_j(q)$ contains a ball of radius comparable to $2^{-j-1}r$ (as long as $\alpha$ is chosen to be big, say $\alpha>2M$, where $M$ is the corkscrew constant). Moreover,
\begin{equation}\label{eq:lbgs}
	\sum_{j=m_1+1 }^{\infty} u^2(A_j) \geq u^2(A_{m_1}) \gtrsim u^2(A_1).
\end{equation}
Recall for any $q\in \Delta'$, the point $A_1= A_1(q)$ is a corkscrew point of $B(q,2^{-1}r)$. 
Let $A'$ be the corkscrew point for $T(\Delta'/2)$, by Lemma \ref{lm:Hcc} and Harnack inequality, $u(A') \approx u(A_1)$. 
Therefore
\[ |S_{r/2}u|^2 (q) \gtrsim u^2(A_1) \gtrsim u^2(A'), \quad \text{for any~} q\in \Delta' . \]
Combining this with \eqref{eq:Carlcond} and \eqref{eq:CarlSquaregeq}, we get
\[ \sigma(\Delta')\|f\|_{BMO(\sigma)}^2 \gtrsim \int_{\Delta'/2} |S_{r/2}u|^2 d\sigma \gtrsim \sigma(\Delta'/2)u^2(A') \gtrsim \sigma(\Delta')u^2(A') , \]
and thus
\begin{equation}\label{eq:pointwiseBMO}
	u(A') \lesssim \|f\|_{BMO(\sigma)}.
\end{equation} 
Let $A$ be a corkscrew point for $\Delta$. Since $\Delta$ and $\Delta'$ have the same radius $r$ and they are $2r-$distance apart, we have $u(A) \sim u(A')$. By assumption $f$ is supported on $\Delta$, hence
\begin{equation}\label{eq:repatA}
	u(A) = \int_{\Delta} f d\omega^A.
\end{equation}
The lemma follows by combining \eqref{eq:pointwiseBMO} and \eqref{eq:repatA}.
\end{proof}

%%%%%%%%%%%%%%%%%%%%%%%%%%%%%%%%%%%%%%%%%%%%%%%%%%%
With that at hand, we pass to the
\vskip 0.08in
\begin{proof}[Proof of Theorem \ref{thm:5.1}]

By the change of pole formula in Lemma \ref{lm:cop} and Harnack inequality, to prove $\omega\in A_{\infty}(\sigma)$ and in particular \eqref{eq:defAinfty}, it suffices to show:
For any $\epsilon>0$ fixed, we can find $\eta= \eta(\epsilon) $, such that for any Borel set $E \subset \Delta$,
\begin{equation}\label{eq:proveAinfty}
	\frac{\sigma(E)}{\sigma(\Delta)} <\eta \quad\text{implies} \quad \frac{\omega^A (E)}{\omega^A (\Delta)} < \epsilon.
\end{equation} 
Here $\Delta$ is a surface ball and $A$ is a corkscrew point for $\Delta$.
In fact, since $\sigma$ and $\omega$ are regular Borel measures, we may assume $E$ is an open subset of $\Delta$.

Recall from Lemma \ref{lm:nondeg} that 
\[ \omega^A(\Delta) \geq C^{-1} \] for some $C>1$. Thus to show $\omega^A(E)/\omega^A(\Delta)<\epsilon$ it suffices to show $\omega^A(E) < C^{-1}\epsilon$.
Let $\delta>0$ be a small constant to be determined later, we define a function
\begin{equation}
	f(x) = \max\left\{ 0, 1+\delta \log M_{\sigma}\chi_{E}(x)\right\} \label{def:f}
\end{equation} 
where $M_{\sigma}$ is the Hardy-Littlewood maximal function with respect to $\sigma$.
Similar to Section 5.3 of \cite{Zh}, $f$ satisfies
\begin{itemize}
	\item $0\leq f\leq 1$, and $f\equiv 1$ on the open set $E$;
	\item $\|f\|_{BMO(\sigma)} \leq A\delta$, where $A$ is a constant independent of $E$;
	\item If 
		\begin{equation}\label{conditioneta}
			\frac{\sigma(E)}{\sigma(\Delta)} < \eta(\delta) \sim e^{-1/\delta},
		\end{equation}  
		then $f$ is supported in $2\Delta$.
\end{itemize}

Next we use a mollification argument to approximate $f$ by continuous functions. Let $\varphi$ be a radially symmetric smooth function on $\mathbb{R}^n$ such that $\varphi = 1$ on $B_{1/2}$, $\supp \varphi\subset B_1 $ and $0\leq \varphi \leq 1$. Let 
\begin{equation}
	\varphi_{\epsilon}(z) = \frac{1}{\epsilon^{d}} \varphi\left(\frac{z}{\epsilon}\right), \quad f_{\epsilon}(x) = \frac{ \int_{y\in \pO} f(y) \varphi_{\epsilon} (x-y) d\sigma(y)}{\int_{y\in\pO} \ve(x-y)d\sigma(y)} \text{ for } x\in\pO . \label{deffe}
\end{equation} 
Then these $f_\epsilon$'s satisfy the following properties:
\begin{itemize}
	\item each $f_{\epsilon}$ is continuous, and is supported in $3\Delta$;
	\item there is a constant $C$ (independent of $\epsilon$) such that $\|f_{\epsilon}\|_{BMO(\sigma)} \leq C \|f\|_{BMO(\sigma)}$;
	\item $f(x) \leq \liminf_{\epsilon\rightarrow 0} f_{\epsilon}(x)$ for all $x$ in their support $3\Delta$. 
\end{itemize} 
The proof of the above properties is a slight modification of Appendix A of \cite{Zh}: here the mollifier $\{\varphi_\epsilon\}$ is an approximation of identity of dimension $d$, instead of dimension $n-1$. The proof uses standard mollification arguments and the Ahlfors regularity of $\pO$. Moreover, the proof of the last property also uses  the precise definition of $f$ in \eqref{def:f}.

Let $A'$ be a corkscrew point with respect to $3\Delta$. The last property and Fatou's lemma imply
\begin{align}
	\int_{3\Delta} f(x) d\omega^{A'}(x) \leq \int_{3\Delta} \liminf_{\epsilon\rightarrow 0} f_{\epsilon}(x) d\omega^{A'}(x) \leq \liminf_{\epsilon\rightarrow 0} \int_{3\Delta} f_{\epsilon}(x) d\omega^{A'}(x) . \label{eq:Fatou}
\end{align} 
Since each $f_{\epsilon}$ is non-negative, continuous and supported on $3\Delta$, we apply Lemma \ref{lm:BMOprelim} and get
\begin{equation}
	 \int_{3\Delta} f_{\epsilon}(x) d\omega^{A'}(x) \leq C \|f_{\epsilon}\|_{BMO(\sigma)}\leq C' \|f\|_{BMO(\sigma)}. \label{eq:feBMO}
\end{equation} 
Combining \eqref{eq:Fatou} and \eqref{eq:feBMO}, we get
\[  \int_{3\Delta} f(x) d\omega^{A'}(x)\leq C' \|f\|_{BMO(\sigma)} \leq C''\delta. \]
On the other hand, since $f\geq\chi_E$
\[  \int_{3\Delta} f(x) d\omega^{A'}(x) \geq \omega^{A'}(E) \gtrsim \omega^{A}(E). \]
The last inequality follows from the Harnack inequality and the fact that $A, A'$ are corkscrew points to surface balls $\Delta, 3\Delta$ respectively.
 Therefore $\omega^A(E) \leq C\delta$ as long as the condition \eqref{conditioneta}, i.e. $\sigma(E)/\sigma(\Delta)<\eta$ is satisfied. In other words, $\omega \in A_{\infty}(\sigma)$.
\end{proof}

%%%%%%%%%%%%%%%%%%%%%%%%%%%%%%%%%%%%%%%%%%%%%%%%%%%%%%%%%%%%%%%%%%%%%%%%%%%%%%%%%%%%%%%%%%%%
%%%%%%%%%%%%%%%%%%%%%%%%%%%%%%%%%%%%%%%%%%%%%%%%%%%%%%%%%%%%%%%%%%%%%%%%%%%%%%%%%%%%%%%%%%%%

%\appendix
%\appendixpage
%\addappheadtotoc
%
%\setcounter{equation}{0}
%\renewcommand{\theequation}{\arabic{equation}} 

\newpage

%\Addresses

\end{document}